\documentclass[a4 paper,11pt]{amsart}

\usepackage{amsmath} %
\usepackage{amssymb} %
\usepackage{amscd} %
\usepackage{amsthm} %
\usepackage{mathrsfs} %
\usepackage{euscript}
\usepackage{eufrak}
\usepackage{ulem}
\usepackage[alphabetic]{amsrefs}
\usepackage{ascmac} %
\usepackage{comment}
\usepackage{bm}%
\usepackage{enumerate}
\usepackage[dvipdfmx]{graphicx} 
\usepackage{hyperref}
\usepackage{xcolor}
\definecolor{unbleu}{rgb}{0.03, 0.15, 0.4}

\hypersetup{
pdfborder = {0 0 0},
colorlinks,
linkcolor=unbleu,
citecolor=unbleu,
urlcolor=unbleu
}

\usepackage{fancyhdr}

 \newtheorem{theorem}{Theorem}[section]
 \newtheorem{lemma}[theorem]{Lemma}
 \newtheorem{proposition}[theorem]{Proposition}
 \newtheorem{problem}[theorem]{Problem}

\newtheorem{corollary}[theorem]{Corollary}

\newtheorem{maintheorem}{Main Theorem} 
  
\theoremstyle{definition}
\newtheorem{definition}[theorem]{Definition}
\newtheorem{remark}[theorem]{Remark}
\newtheorem{example}[theorem]{Example}


\newcommand{\R}{\mathbb R}
\newcommand{\N}{\mathbb N}


%


\begin{document}

\title[]{Explicit formulae and topological descriptions of action minimizing sets of a full shift with an uncountable alphabet $[0,1]$}

\author[Y.Kajihara]{Yuika Kajihara}
\address{Department of Mathematics, Kyoto University, Kitashirakawa Oiwake-cho, Sakyo-ku,
Kyoto, 606-8502, Japan}
\email{kajihara.yuika.6f@kyoto-u.ac.jp}

\author[S. Motonaga]{Shoya Motonaga}
\address{Faculty of Computer Science and Systems Engineering, Kyushu Institute of Technology, 680-4 Kawazu, Iizuka city, Fukuoka 820-8502, Japan}
\email{motonaga.shoya640@mail.kyutech.jp}

\author[M. Shinoda]{Mao Shinoda}
\address{Department of Mathematics, Ochanomizu University, 2-1-1 Otsuka, Bunkyo-ku, Tokyo, 112-8610, Japan}
\email{shinoda.mao@ocha.ac.jp}

\subjclass[2020]{\textcolor{black}{Primary} 37B10, 37E05, 37E40, 37J51}

\keywords{}

\begin{abstract}
We completely solve ergodic optimization of a full shift with an uncountable alphabet $[0,1]$, which is one of the most well-known examples of infinite dimensional dynamical systems with positive mean dimension (and thus with infinite topological entropy), for potentials depending only on the first two coordinates with the twist condition as well as giving explicit formulae of the associated Mather set and the Aubry set.
Moreover, we investigate the total disconnectedness of the (quotient) Aubry set, in which case the differentiability of the potential function makes a crucial difference. Although these results imply that the (quotient) Aubry set is small enough, we give a complete characterization of an analogical object of the Aubry set, called the Ma\~{n}\'e set, and show that it is much larger than the Aubry set so that it contains cubes of any finite dimension.
 In our proofs, estimates for ``connecting orbits" play key roles, and we establish them by combining three different perspectives in our symbolic setting; weak KAM subaction approach for symbolic dynamics, Ma\~{n}\'e's formulation in Lagrangian systems, and Bangert's variational approach for twist maps.
\end{abstract}


\maketitle

\section{Introduction}\label{sec:intro}
\subsection{Backgrounds and our motivations}
Ergodic optimization originates from problems in chaos control studied by Hunt and Ott \cite{HuntOtt1996B, HuntOtt1996A}, and can be regarded as a method to extract ordered dynamics from chaotic systems. Since its formulation by Jenkinson \cite{Jenkinson2006}, it has developed into an active area within ergodic theory
because it naturally appears in the zero-temperature limit of thermodynamic formalism and thus plays an important role in understanding equilibrium states \cite{Contreras2016,Jenkinson19}.
Moreover, in the context of Lagrangian systems, similar problems have been studied in the search for ordered dynamics such as quasi-periodic orbits within non-integrable systems, most notably in Aubry-Mather theory \cite{Mather1982, Mather91}. 

A central conjecture in ergodic optimization is the Typically Periodic Optimization (TPO) conjecture, which asserts that, for suitable chaotic systems, optimizing measures for generic potentials with suitable regularities are supported on a single periodic orbit.
Various results in this direction have been obtained \cite{YuanHunt1999,Bousch2000, ContrerasLopesThieullen2001,  Contreras2016}. However, for nontrivial concrete potentials of chaotic dynamical systems, explicit descriptions of optimizing measures remain largely unknown.
Here, we shall remark that all optimizing measures of a real analytic potential function of a real analytic expanding circle map have zero entropy unless the potential is cohomologous to a constant \cite{GS2024}, which tells us significant information about the dynamics on the optimal measures for each real analytic potential, but its detailed dynamics are still not well understood.
Even for symbolic dynamical systems with a finite alphabet---which play a fundamental role in the analysis of chaotic systems via Markov partitions---the explicit determination of optimizing measures for concrete potentials has only been partially achieved \cite{ LopesThieullen2003,Jenkinson2006}.

In this paper, we consider a full shift with the uncountable alphabet $[0,1]$, denoted by $\sigma:[0,1]^{\N_0}\to [0,1]^{\N_0}$, where $\mathbb{N}_0$ is the set of non-negative integers.
The phase space $X:=[0,1]^{\N_0}$ is endowed with a metric topology of the metric $d_X$ given by
\[
d_X(\underline{x},\underline{y})=\sum_{i=0}^\infty \frac{|x_i-y_i|}{2^{i+1}},\qquad \underline{x}=x_0x_1x_2\ldots,\  \underline{y}=y_0y_1y_2\ldots \in X, 
\]
and $\sigma:[0,1]^{\N_0}\to [0,1]^{\N_0}$ is a shift map given by 
\[
\sigma(\underline{x})_i=x_{i+1},\qquad \underline{x}=x_0x_1x_2\ldots\in X.
\]
This system is not only obtained by extending symbolic dynamics with finite alphabets to an uncountable one, but rather arises naturally as a fundamental model in geometric analysis, particularly in the theory of mean dimension, since it provides one of the simplest examples of infinite-dimensional dynamical systems with  positive mean dimension \cite{Tsu19} and thus infinite topological entropy.
The ergodic optimization problem for this full shift is also of interest from the point of view of the mean dimension with potentials, especially in the zero-temperature limit \cite{Tsu20,CPV24, Motonaga2025}. Although both ergodic optimization and mean dimension are notoriously difficult to compute in general, our system may offer an ideal model that is tractable.
As stated below, we focus on Lipschitz potentials depending only on the first two coordinates on $X=[0,1]^{\N_0}$, which constitute one of the simplest nontrivial classes of potential functions.

It is worth remarking that the ergodic optimization of the full shift with the uncountable alphabet $[0,1]$ is already studied by Lopes et al. \cite{LMST} from the view point of Markov chains using weak KAM-type techniques such as calibrated subactions (they call this system the \textit{XY model}). They showed generic uniqueness and graph property of optimizing measures (see \cites{BCLMS11,LM14} for details).
However, little is known about the dynamics itself. In particular, neither the TPO property nor the explicit form of optimizing measures for concrete potentials had been clarified.

In our previous work \cite{KMS25}, we introduced analogues of the Aubry set and Ma\~n\'e set for this symbolic dynamics, extending beyond the Mather set, inspired by Ma\~{n}\'e's formulation of action minimizing sets in Lagrangian systems \cite{Sorrentino2015}. Moreover, using weak KAM-type characterizations together with techniques from the analysis of twist maps, we obtained a variational framework sufficient to establish the TPO property for a certain class of potentials
(Lipschitz potentials depending only on the first two coordinates with the twist condition). While periodic optimizing measures for these potentials were completely characterized in \cite{KMS25} (see also Theorem~\ref{thm:criterion} (iii) below), the existence or non-existence of fully supported optimizing measures and other types of measures remained open. Moreover, as in classical Lagrangian systems, Aubry and Ma\~n\'e sets may contain ``connecting orbits" such as homoclinic or heteroclinic ones \cite{CP02, Sorrentino2015}, and it is still unclear what kinds of optimizing orbits can arise.
\begin{problem}
\label{prob:connecting}
What kinds of orbits can exist in the action minimizing invariant sets such as the Mather set, the Aubry set, and Ma\~{n}\'{e} set?
\end{problem}
This situation is not peculiar to our system. To the best of our knowledge, for nontrivial potentials in nontrivial chaotic systems, explicit solutions of ergodic optimization problems as well as explicit optimizing orbits are extremely rare. Clarifying such examples is important not only for ergodic optimization itself but also for understanding the mean dimension with potentials from a concrete viewpoint.

The main contribution of this paper is a complete classification of optimal ``connecting orbits" in the action minimizing set such as the Aubry set and the Ma\~n\'e set for Lipschitz potentials depending only on the first two coordinates with the twist condition, which enables us to solve the ergodic optimization of our systems for these potentials  (Main Theorem~\ref{thm:aubry}).
In general, the Mather set and the Aubry set might be complicated \cite{Mather91} but our results show that, under a twist condition on the potential, these sets become remarkably simple.
On the other hand, the Ma\~n\'e set, which is an extension of the Aubry set, turns out to be significantly larger: it contains cubes of arbitrarily large dimension
(Main Theorem~\ref{thm:semi-static}). This implies that the Ma\~n\'e set contains \textit{meta-chaotic} orbits, which exhibit chaotic behavior for arbitrarily long finite time but eventually reduce to fixed points.

In addition, we analyze the associated equivalence classes (which correspond to action minimizing invariant components) in the Aubry set. Its quotient space is known as the quotient Aubry set, and as in the study of Lagrangian/Hamiltonian dynamics, we address the following natural question:
How small is the quotient Aubry set in some sense?
This type of question dates back to Mather \cite{Mather03} and has a significant importance because, in nearly integrable Hamiltonian systems, a family of invariant tori (corresponding to action minimizing sets) makes a cantor set.
Thus, for more general non-integrable systems, geometric descriptions of action minimizing invariant components correspond to that of the quotient Aubry set.
We will discuss the topological aspect of this question (known as Mather's problem in Lagrangian/Hamiltonian systems) in our setting:
\begin{problem}[An analogy of Mather's problem]\label{prob:Mather}
If the potential has a sufficient regularity, is the quotient Aubry set totally disconnected?
\end{problem}
For the setting of Lagrangian systems, there are some relevant results.
In the original paper \cite{Mather03}, Mather provided a sufficient condition for the total disconnectedness of the quotient Aubry set.
Sorrentino \cite{Sorrentino08} generalized this result for higher dimensional systems, assuming sufficient regularity and natural conditions (in terms of applications) of Lagrangians (which correspond to potential functions in our setting).
The assumption of regularity of Lagrangians is crucial for the total disconnectedness of the quotient Aubry set since, without sufficient regularity, there are examples such that the quotient Aubry sets are isometric to closed intervals \cite{Mather04}.
See also \cite{FFR09} for the related research on Hausdorff dimension.
We show similar (perhaps more detailed) results for topological descriptions of the  Aubry set and its quotient space for our system (Main Theorem~\ref{thm:Aubry_small}). In particular, we give an affirmative answer to Mather's problem in our setting as well as providing a sufficient condition that the quotient Aubry sets are isometric to the unit interval $[0,1]$.

Our approach in this paper relies on three different methods:
weak KAM approach, Ma\~{n}\'e's formulation in Lagrangian systems, and Bangert's variational approach for twist maps.
It is known that these methods are closely related with each other in Lagrangian/Hamiltonian dynamical systems \cite{Sorrentino2015}, but there is no trivial connection in our symbolic setting.
We focus on ``connecting orbits" as well as equivalent classes in the Aubry set, and investigate them from these three perspectives.
Most of computations and discussion in each step of the proofs are entirely elementary, but switching the above different perspectives and results seems not obvious.
In the next subsection, we will describe our precise settings and statements.

\subsection{Main results}
{Now we consider the ergodic optimization of the uncountable full shift $([0,1]^{\N_0},\sigma)$ for Lipschitz continuous functions on $[0,1]^{\N_0}$, especially functions depending only on the first two coordinates with the twist condition.}
A $\sigma$-invariant Borel probability measure that attains the \textit{optimal ergodic average}
\[
\alpha_{\varphi}=\inf_{\mu\in \mathcal{M}_\sigma(X)} \int \varphi d\mu
\]
is called an {\it minimizing measure} for $\varphi\in C(X)$ (called \textit{potential}), where $C(X)$ is the set of continuous functions on $X$ and $\mathcal{M}_\sigma(X)$ stands for the set of $\sigma$-invariant Borel probability measures on $X$ endowed with the weak$^*$-topology.
The set of minimizing measures for $\varphi$ is denoted by $\mathcal{M}_{{\rm min}}(\varphi)$.
One of {the aim of this paper is to understand the complete description of  $\mathcal{M}_{{\rm min}}(\varphi)$ as well as} the ``complexity" of the corresponding invariant set, called the \textit{Mather set} of $\varphi$, defined as
\[
\mathscr{M}_\varphi=\bigcup_{\mu\in \mathcal{M}_{{\rm min}}(\varphi)}{\rm supp}(\mu),
\]
where ${\rm supp}(\mu)$ is the intersection of all compact sets with full measure with respect to $\mu$.

A common approach to investigate the Mather set is considering calibrated subactions (see Section~\ref{sec:calibrated_equivalent} for the details).
In addition, it is useful to consider a larger invariant set, called the Aubry set, for a given Lipschitz potential.
In \cite{BLL13}, inspired by the weak KAM theory, detailed descriptions of the Mather set and the Aubry set as well as calibrated subactions for a subshift of finite type are discussed, {but the complete dynamics on the support of minimizing measures for concrete potentials are still not clear}.
Using their ideas, {as stated before,} we also obtained analogous results of the Mather sets and the Aubry sets for Lipschitz functions with respect to the full shift with $[0,1]$ in the previous work \cite{KMS25}.
Furthermore, combining variational techniques developed in \cites{Ban88, Yu22}, we gave new descriptions of the Aubry sets for Lipschitz potentials depending only on the first two coordinates with the twist condition.

{Below we summarize our previous results since the present work combines these results with the analysis of ``connecting orbits":}
Let $\Omega_\varphi$ and $N_\varphi$ be the Aubry set and the Ma\~{n}\'{e} set for a Lipschitz function $\varphi$, and let $H_\varphi:X\times X\to \R\cup\{+\infty\}$ be the Peierl's barrier (see Section~\ref{sec:action_minimizing} for their precise definitions).
Then, results similar to Aubry-Mather theory for Euler-Lagrange flows are also valid in our setting:
\begin{theorem}[\cite{KMS25}]
\label{theorem:KMS25}
Let $\varphi$ be a Lipschitz continuous function on $X=[0,1]^{\N_0}$. Then
\begin{enumerate}[(i)]
    \item $\mathscr{M}_\varphi\subset \Omega_\varphi=\{\underline{x}\in X \mid H_\varphi(\underline{x},\underline{x})=0\}\subset N_\varphi$.
    \item If both \( \underline{x} \) and \( \underline{y} \) belong to \( \Omega_{\varphi} \), then the relation \( \underline{x} \sim_\varphi \underline{y} \) given by  
    \[
    H_{\varphi}(\underline{x},\underline{y})+H_{\varphi}(\underline{y},\underline{x})=0
    \]
    is an equivalence relation on $\Omega_{\varphi}$.
\end{enumerate}
\end{theorem}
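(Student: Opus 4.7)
The plan is to normalize so that $\alpha_\varphi = 0$ by subtracting a constant from $\varphi$ (which changes none of the sets or the barrier appearing in the statement), and then to exploit the two workhorse tools of weak KAM theory in the symbolic setting: the triangle-type inequality for the Peierl's barrier and the existence of a calibrated subaction, which in this Lipschitz setting is obtained via the Lax--Oleinik-type construction adapted to subshifts in \cite{BLL13} and transferred to the XY model as in the earlier portion of \cite{KMS25}.

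For the inclusion $\mathscr{M}_\varphi \subset \Omega_\varphi$ in (i), I would reduce to an ergodic minimizer $\mu$ by ergodic decomposition and pick $\underline{x}\in\mathrm{supp}(\mu)$. Birkhoff's theorem gives $n^{-1}S_n\varphi(\underline{x})\to 0$ $\mu$-a.e., and Poincaré recurrence supplies return times $n_k\to\infty$ with $\sigma^{n_k}\underline{x}\to\underline{x}$. Plugging the orbit segments from $\underline{x}$ to $\sigma^{n_k}\underline{x}$ into the infimum defining $H_\varphi(\underline{x},\underline{x})$, and using uniform continuity of $\varphi$ to absorb the cost of closing the orbit at the endpoint, yields $H_\varphi(\underline{x},\underline{x})\leq 0$. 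The reverse inequality comes from testing against any calibrated subaction $u$: the general bound $H_\varphi(\underline{x},\underline{y})\geq u(\underline{y})-u(\underline{x})$ forces $H_\varphi(\underline{x},\underline{x})\geq 0$. Hence $\underline{x}\in\Omega_\varphi$.

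For $\Omega_\varphi\subset N_\varphi$, the Ma\~n\'e set is characterized via semi-static orbits. Given $\underline{x}$ with $H_\varphi(\underline{x},\underline{x})=0$, I would pick, for each $k$, a point $\underline{z}_k$ and an integer $n_k\to\infty$ with $\underline{z}_k\to\underline{x}$, $\sigma^{n_k}\underline{z}_k\to\underline{x}$ and $S_{n_k}\varphi(\underline{z}_k)\to 0$. A diagonal extraction in the compact space $X$ produces a bi-infinite orbit through $\underline{x}$ whose every finite segment has asymptotically minimal action; the calibrated subaction provides matching lower bounds on each truncation, so no action leaks in the limit and the limiting orbit is genuinely semi-static. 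This passage to the limit is the main analytical obstacle, because the symbolic space has uncountable alphabet and one must carefully couple the lower semicontinuity of the action with the uniform subaction bound on every window.

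For part (ii), symmetry is built into the definition of $\sim_\varphi$, and reflexivity on $\Omega_\varphi$ is the defining property $H_\varphi(\underline{x},\underline{x})=0$. The only real content is transitivity. Assume $\underline{x}\sim_\varphi\underline{y}$ and $\underline{y}\sim_\varphi\underline{z}$; the standard triangle inequality for the Peierl's barrier (an immediate consequence of its infimum-over-orbits definition, applied separately to the pairs $(\underline{x},\underline{z})$ and $(\underline{z},\underline{x})$ with midpoint $\underline{y}$) gives
\[
H_\varphi(\underline{x},\underline{z}) + H_\varphi(\underline{z},\underline{x}) \leq \bigl(H_\varphi(\underline{x},\underline{y}) + H_\varphi(\underline{y},\underline{x})\bigr) + \bigl(H_\varphi(\underline{y},\underline{z}) + H_\varphi(\underline{z},\underline{y})\bigr) = 0,
\]
while the same triangle inequality applied at $\underline{x}$ with midpoint $\underline{z}$ yields $H_\varphi(\underline{x},\underline{z}) + H_\varphi(\underline{z},\underline{x}) \geq H_\varphi(\underline{x},\underline{x}) = 0$. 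Thus the sum is zero and $\underline{x}\sim_\varphi\underline{z}$, closing the argument. The truly delicate step throughout is the construction of the limiting semi-static orbit in the $\Omega_\varphi\subset N_\varphi$ inclusion; everything else reduces to triangle-inequality manipulations once the barrier-subaction machinery from \cite{BLL13} has been transported to the XY model.
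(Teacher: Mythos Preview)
This theorem is not proved in the present paper at all: it is quoted verbatim from the earlier work \cite{KMS25} (and the supporting facts $\Omega_\varphi=\{\underline{x}:H_\varphi(\underline{x},\underline{x})=0\}$ and $A_\varphi=\Omega_\varphi$ are likewise imported as Theorem~\ref{theorem:H_finite} and Proposition~\ref{prop:action_minimizing_aubry}). So there is no in-paper proof to compare your sketch against; one can only assess your outline on its own merits.

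Your treatment of part~(ii) is correct and is exactly the standard argument: reflexivity is the defining property of $\Omega_\varphi$, symmetry is built in, and transitivity follows from two applications of the triangle inequality for $H_\varphi$. For part~(i), two points deserve care. First, in the inclusion $\mathscr{M}_\varphi\subset\Omega_\varphi$, invoking Birkhoff and Poincar\'e recurrence \emph{separately} does not yield return times $n_k$ along which $S_{n_k}(\varphi-\alpha_\varphi)(\underline{x})$ stays bounded (let alone tends to $0$): the averages $n^{-1}S_n\to0$ are compatible with $S_{n_k}\to+\infty$ along every recurrence sequence. You need either Atkinson's lemma or, more in the spirit of the paper, the subaction trick: since $\psi:=\varphi-\alpha_\varphi+u-u\circ\sigma\ge0$ has $\int\psi\,d\mu=0$, one gets $S_n(\varphi-\alpha_\varphi)(\underline{x})=u(\sigma^n\underline{x})-u(\underline{x})$ on a full-measure set, and then ordinary Poincar\'e recurrence suffices. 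Second, your argument for $\Omega_\varphi\subset N_\varphi$ via a ``bi-infinite orbit through $\underline{x}$'' does not fit the one-sided XY model: $N_\varphi$ here is defined purely in terms of the forward semi-orbit being semi-static, so there is no bi-infinite orbit to construct and no diagonal extraction is needed. The route actually taken in \cite{KMS25} (and referenced here) is cleaner: one shows $\Omega_\varphi=A_\varphi$, and then the elementary inequality $S_\varphi(\sigma^i\underline{x},\sigma^j\underline{x})+S_\varphi(\sigma^j\underline{x},\sigma^i\underline{x})\ge0$ together with the static identity forces every $\varphi$-static orbit to be $\varphi$-semi-static, giving $A_\varphi\subset N_\varphi$ immediately.
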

Restricting our attentions to Lipschitz potentials depending only on the first two coordinates with the twist condition, we have much more explicit information on the Aubry set.
For a continuous function $h:[0,1]^2\to\R$, we introduce the following notations, which we frequently use in the present paper:
\[
h^\ast:=\min_{x \in [0,1]} h(x,x),\qquad 
 \mathrm{m}_h:=\{a\in[0,1]\mid h(a,a)= h^\ast\}.
\]
Moreover, when $h:[0,1]^2\to\R$ is of class $C^2$, we denote its derivative with respect to the $i$-th component by $D_i$ for $i=1,2$ respectively.
Let $\mathscr{H}$ be the set of $C^2$-functions with the twist condition on $[0,1]^2$, that is, 
\[
\mathscr{H}=\{h\in C^2([0,1]^2;\R)\mid D_2 D_1 h<0 \}.
\]
By abuse of notation, we write ``$\varphi \in \mathscr{H}$" for a Lipschitz continuous function $\varphi$ on $X$ if there exists $h\in \mathscr{H}$ satisfying $\varphi(\underline{x})=h(x_0,x_1)$ for all $\underline{x}=x_0x_1x_2\ldots\in X$.

\begin{theorem}[Main Theorem 3 in \cite{KMS25}]
\label{thm:criterion}
    Suppose that $\varphi(\underline{x})=h(x_0,x_1)$ with some $h\in \mathscr{H}$.
    Then we have $(i)$--$(iii)$:
    \begin{enumerate}[(i)]
    \item $\alpha_\varphi= h^\ast$.
    \item $\mathcal{M}_{{\rm min}}(\varphi)\cap \mathcal{M}^{\mathrm{p}}=\{\delta_{a^\infty}\mid a\in \mathrm{m}_h\}$,
    where $\delta_{\underline{x}}$ is the Dirac measure supported at $\underline{x}$ and $\mathcal{M}^{\mathrm{p}}$ stands for the set of invariant probability measures supported on a single periodic orbit.
    \item $\Omega_\varphi\subset (\mathrm{m}_h)^{\N_0}$, i.e., for any $\underline{x} = \{x_i\}_{i \in \N_0} \in \Omega_{\varphi}$, it holds that
    \[h(x_i,x_i)= h^\ast \ \text{for all} \ i \in \N_0.\]
    \end{enumerate}
\end{theorem}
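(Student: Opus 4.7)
My plan is to derive from the twist condition a single quantitative cyclic inequality and then use it to handle all three parts. The inequality will state: for any $n \ge 1$ and any $(z_0, \ldots, z_{n-1}) \in [0,1]^n$,
\[
V_n(\vec{z}) \;:=\; \sum_{k=0}^{n-1} h(z_k, z_{(k+1) \bmod n}) \;\ge\; n h^\ast + \sum_{k=0}^{n-1}\bigl[h(z_k, z_k) - h^\ast\bigr] + \tfrac{c_1}{2}\sum_{k=0}^{n-1}(z_k - z_{(k+1)\bmod n})^2,
\]
where $c_1 := -\max_{[0,1]^2} D_1 D_2 h > 0$; every summand on the right is non-negative. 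To establish it, I would write $h(z_k, z_{k+1}) - h(z_k, z_k) = \int_{z_k}^{z_{k+1}} D_2 h(z_k, t)\, dt$ and split $D_2 h(z_k, t) = D_2 h(t, t) + \int_t^{z_k} D_1 D_2 h(s, t)\, ds$. The first piece telescopes to zero around any cycle, while a short sign-case check using $D_1 D_2 h \le -c_1$ bounds the contribution from each bond by $\tfrac{c_1}{2}(z_{k+1} - z_k)^2$, irrespective of the sign of $z_{k+1} - z_k$.

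Granting this inequality, part (i) is short. The upper bound $\alpha_\varphi \le h^\ast$ comes from $\mu = \delta_{a^\infty}$ for any $a \in \mathrm{m}_h$. For the lower bound I would take ergodic $\mu$, apply Birkhoff's theorem to $\tfrac{1}{n} S_n\varphi$, and note that the cyclic correction $\tfrac{1}{n}[h(x_{n-1}, x_0) - h(x_{n-1}, x_n)]$ is $O(1/n)$, so $\tfrac{1}{n} V_n(x_0, \ldots, x_{n-1})$ has the same $\mu$-a.s.\ limit $\int \varphi\,d\mu$. The key inequality then gives $\int \varphi\,d\mu \ge h^\ast$, and ergodic decomposition extends this to all invariant $\mu$. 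Part (ii) is the equality analysis: the measures $\delta_{a^\infty}$ with $a \in \mathrm{m}_h$ are plainly minimising periodic measures, and conversely any minimising $\mu \in \mathcal{M}^{\mathrm{p}}$ is supported on a period-$n$ orbit whose cyclic action equals $n h^\ast$, so equality in the key inequality forces the orbit to be constant with its common value in $\mathrm{m}_h$.

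The serious work is in part (iii). The $\sigma$-invariance of $\Omega_\varphi$ reduces matters to showing $\underline{x} \in \Omega_\varphi \Rightarrow x_0 \in \mathrm{m}_h$. By Theorem~\ref{theorem:KMS25}(i), $\underline{x} \in \Omega_\varphi$ entails $H_\varphi(\underline{x}, \underline{x}) = 0$. From the definition of the Peierl's barrier I plan to extract integers $n_k \to \infty$ and points $\underline{z}^{(k)} \in X$ with $\underline{z}^{(k)} \to \underline{x}$, $\sigma^{n_k}\underline{z}^{(k)} \to \underline{x}$, and $S_{n_k}\varphi(\underline{z}^{(k)}) - n_k h^\ast \to 0$. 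Uniform continuity of $h$, together with $z_0^{(k)}, z_{n_k}^{(k)} \to x_0$, will let me swap the last bond $h(z_{n_k-1}^{(k)}, z_{n_k}^{(k)})$ for $h(z_{n_k-1}^{(k)}, z_0^{(k)})$ at cost $o(1)$, so that $V_{n_k}(z_0^{(k)}, \ldots, z_{n_k-1}^{(k)}) - n_k h^\ast \to 0$. Since every summand on the right of the key inequality is non-negative, isolating the $k=0$ term gives
\[
0 \;\le\; h(z_0^{(k)}, z_0^{(k)}) - h^\ast \;\le\; V_{n_k} - n_k h^\ast \;\longrightarrow\; 0,
\]
and $z_0^{(k)} \to x_0$ plus continuity then yields $h(x_0, x_0) = h^\ast$, i.e., $x_0 \in \mathrm{m}_h$.

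The main obstacle will be the passage in (iii) from the abstract vanishing $H_\varphi(\underline{x}, \underline{x}) = 0$ to concrete approximating orbits $\underline{z}^{(k)}$ with $z_0^{(k)} \to x_0$: this depends on the precise formulation of the Peierl's barrier inherited from \cite{KMS25} and will require a compactness argument specific to the XY model. Once those orbits are in hand, the twist-driven cyclic inequality does the rest mechanically, essentially because its quantitative form is strong enough to isolate the diagonal contribution at a single site while absorbing the closing error.
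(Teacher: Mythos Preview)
Your argument is correct. The quantitative cyclic inequality you derive from the twist condition is valid (the sign-case analysis of the double integral $\int_{z_k}^{z_{k+1}}\!\int_t^{z_k} D_1D_2h(s,t)\,ds\,dt \ge \tfrac{c_1}{2}(z_{k+1}-z_k)^2$ goes through in both cases), and it handles all three parts cleanly. One minor logical point in (iii): the two-sided convergence $S_{n_k}\varphi(\underline{z}^{(k)}) - n_k h^\ast \to 0$ is not immediate from $H_\varphi(\underline{x},\underline{x}) = 0$, which only furnishes the upper bound; the lower bound follows \emph{after} you close the loop and invoke $V_{n_k} \ge n_k h^\ast$. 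So the natural order is to close first, deduce $0 \le V_{n_k} - n_k h^\ast \le (S_{n_k}\varphi - n_k h^\ast) + O(\varepsilon_k)$, and then isolate the diagonal term.

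The present paper does not prove this theorem---it is imported from \cite{KMS25}---so there is no in-paper proof to compare against. However, the auxiliary results quoted here from \cite{KMS25} (Lemma~\ref{lemma:Ban33} and Lemma~\ref{lemma:Yu27}) indicate that the original route proceeds via Bangert's combinatorial conditions $(H_3)$ and $(H_4)$ rather than the $C^2$ twist condition directly: those lemmas are stated for $h \in \mathcal{H}$ and rest on order-theoretic and non-crossing properties of minimal configurations, not on differentiability. Your integral inequality is a genuinely different and more explicit device: it gives a sharper lower bound (with the quadratic penalty $\tfrac{c_1}{2}\sum_k (z_k - z_{k+1})^2$) and yields both Lemma~\ref{lemma:Ban33} and Lemma~\ref{lemma:Yu27} as immediate corollaries, at the cost of requiring $h \in \mathscr{H}$ rather than the larger class $\mathcal{H}$. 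In short, the inherited approach is broader in scope, while yours is more elementary and quantitative within the $C^2$ setting stated in the theorem.
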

Note that for generic $h\in\mathscr{H}$, $\mathrm{m}_h$ consists of a single point, and then the associated Mather set and Aubry set become a single fixed point, which leads to the typically periodic optimization in the class of Lipschitz potentials depending only on the first two coordinates with the twist condition for the full shift with $[0,1]$ (see Main Theorem~4 in \cite{KMS25}).

{In the present paper, apart from typical properties of minimizing measures, we focus on explicit minimizing measures for each potential in $\mathscr{H}$ (or a larger functional space).}
Actually we investigate all the possibilities of minimizing orbits for such potentials, and it enables us to obtain the ones of minimizing measures.
Our first main result provides complete characterizations of the Mather set and the Aubry set of $\varphi \in \mathscr{H}$, which is an improvement of \cite{KMS25} and tells us that these action minimizing invariant sets $ \mathscr{M}_{\varphi}$ and $\Omega_{\varphi}$ are much simple.
\begin{maintheorem}
\label{thm:aubry}
Suppose that $\varphi(\underline{x})=h(x_0,x_1)$ with some $h\in \mathscr{H}$. Then
\[
\mathscr{M}_\varphi=\Omega_{\varphi}=\bigcup_{a \in \mathrm{m}_h} \{a^\infty\}.
\]
{
In particular,
\[
\mathcal{M}_{{\rm min}}(\varphi)=\{\delta_{a^\infty}\mid a\in \mathrm{m}_h\}.
\]
}
\end{maintheorem}

Next, we want to discuss the ``smallness" of the associated action minimizing sets.
In order to discuss the essential components of the Aubry set, we introduce the quotient Aubry set, which is originally considered in the Aubry-Mather theory for Euler-Lagrange flows \cites{Mather91, Mather03}.
Since
\[	\delta_\varphi(\underline{x},\underline{y}):=H_\varphi(\underline{x},\underline{y})+H_\varphi(\underline{y},\underline{x})
\]
is a pseudo-metric on $\Omega_\varphi$ (see Section~\ref{sec:equivalence-Aubry}),
we can define an equivalence relation $\underline{x}\sim_\varphi\underline{y}$ defined by $\delta_\varphi(\underline{x},\underline{y})=0$
(see Theorem \ref{theorem:KMS25}(ii)).
Then $\delta_\varphi$ induces a metric on $\bar{\Omega}_\varphi:=\Omega_\varphi/\sim_\varphi$.
We call $\bar{\Omega}_\varphi$ the \textit{quotient Aubry set}, as in the Aubry-Mather theory for Euler-Lagrange flows.

Before stating our second main theorem, let us define another equivalence relation.  
For a fixed $h\in\mathscr{H}$, we write  
\[
a \sim_{\mathrm{conn},h} b
\]  
for $a,b\in\mathrm{m}_h$ if $a$ and $b$ belong to the same connected component (denoted by $C(a)$) of $\mathrm{m}_h \subset \mathbb{R}$.
Introducing a pseudo-metric on $\mathrm{m}_h$ (see Proposition~\ref{prop:pseudo-metric} for its details) defined by
\[
\hat{d}_{\R}(a,b)=d_H(C(a),C(b)),
\]
where $d_H$ stands for the Hausdorff distance, we see that $a\sim_{\mathrm{conn},h} b$ holds if and only if $\hat{d}_\R(a,b)=0$.
Thus, the pseudo-metric $\hat{d}_{\R}$ on ${\mathrm{m}}_h$ induces the metric of the quotient space $\bar{\mathrm{m}}_h:=\mathrm{m}_h/\sim_{\mathrm{conn},h}$ and it is a totally disconnected space.
The next result describes the ``size'' of the quotient Aubry sets for our symbolic dynamics.
\begin{maintheorem}\label{thm:Aubry_small}
Suppose that $\varphi(\underline{x})=h(x_0,x_1)$ with some Lipschitz continuous function $h:[0,1]^2\to\R$.
Then the following statements hold:
\begin{enumerate}[(i)]
    \item If $h\in\mathscr{H}$, the Aubry set $\Omega_{\varphi}$ is isometric to $\mathrm{m}_h\subset\R$.
    \item If $h\in\mathscr{H}$, the quotient Aubry set $\bar{\Omega}_\varphi$ is homeomorphic to the totally disconnected space $\bar{\mathrm{m}}_h$.
    \item If $h$ is of the form $h=\rho(x-y)+\frac{1}{2} |x-y|$ with a $C^2$-function $\rho:\R\to\R$ such that $\rho''>0$ on $\R$,
    then the quotient Aubry set $(\bar{\Omega}_{\varphi},\delta_{\varphi})$
    is isometric to the unit interval $([0,1],d_\R)$.
\end{enumerate}
\end{maintheorem}

Main Theorem~\ref{thm:Aubry_small}(i) (resp. (ii)) implies that the Aubry set (resp. the quotient Aubry set) for $\varphi\in\mathscr { H}$ is very small.
Since the Aubry set $\Omega_\varphi$ is a subset of the Ma\~{n}\'{e} set $N_\varphi$, it is natural to ask whether the Ma\~{n}\'{e} set $N_\varphi$ is also small or not.
We have completely determined $N_\varphi$ by the following result, which implies that it is much larger than the Aubry set $\Omega_\varphi$.
\begin{maintheorem}\label{thm:semi-static}
	Let $\varphi:X\to\R$ be a Lipschitz continuous function. Then the Ma\~{n}\'{e} set $N_{\varphi}$ of $\varphi$ can be expressed as
    \[
N_{\varphi}=\{ \underline{x} \in X \mid \sigma^{i}(\underline{x}) \neq \sigma^{j}(\underline{x}) \ \text{if} \ i \neq j \}\cup \bigcup_{M\in\N_0} \sigma^{-M}(\mathscr{M}_{\varphi}^{\mathrm{p}}),
\]
    where $\mathscr{M}_\varphi^{\mathrm{p}}$ stands for the set of periodic points contained in the Mather set $\mathscr{M}_\varphi$ of $\varphi$.

    In particular, for a Lipschitz potential depending only on the first two coordinates  $\varphi(\underline{x})=h(x_0,x_1)$ with some $h\in \mathscr{H}$, it holds that
\[
N_{\varphi}=\{ \underline{x} \in X \mid \sigma^{i}(\underline{x}) \neq \sigma^{j}(\underline{x}) \ \text{if} \ i \neq j \}\cup \bigcup_{M\in\N_0} \sigma^{-M}(\{a^\infty\mid a\in\mathrm{m}_h\}).
\]
\end{maintheorem}
Note that the above formula implies that $N_{\varphi}$ contains cubes of any finite dimension.

The rest of this paper is organized as follows.
In Section \ref{sec:pre_action-minimizing-sets}, we review the basic definitions and properties of the action minimizing sets, and investigate some properties of calibrated subactions in terms of their connection with the equivalence relation defined on the Aubry set.
Section \ref{sec:explicit-form} discusses explicit formulae for the action minimizing sets for $\varphi\in\mathscr{H}$.
In Section \ref{sec:equivalence-Aubry}, we study the equivalence classes of the Aubry set and provide topological descriptions of the (quotient) Aubry set.

\section{Preliminaries: action minimizing sets and calibrated subactions}
\label{sec:pre_action-minimizing-sets}
Throughout this section, we only assume that $\varphi$ is Lipschitz continuous.
\subsection{Action minimizing sets for the full shift with $[0,1]$}\label{sec:action_minimizing}
In this subsection, we briefly review the definitions and basic properties of the Aubry set and the Ma\~{n}\'e set.
The definition of the Aubry set is based on the {\it {Ma\~{n}\'e potential}}.

\begin{definition}[Ma\~n\'e potential]
\label{defi:mane-potential}
    For a Lipschitz function
    $\varphi:X\rightarrow\mathbb{R}$ and $\varepsilon>0$ define
    \begin{align*}
        \widehat{S}_\varphi(\underline{x},\underline{y};\varepsilon)
        =\inf \left\{
        \sum_{i=0}^{n-1} \left(\varphi\circ\sigma^i(\underline{z})-\alpha_{\varphi}\right) \mid n\in \mathbb{N}, \underline{z}\in B(\underline{x},\underline{y},n;\varepsilon)
        \right\},
    \end{align*}
    where 
    \begin{align*}
        B(\underline{x},\underline{y},n;\varepsilon)=
       \{\underline{z} \in X \mid d_X(\underline{x},\underline{z})<\varepsilon, \ d_X(\sigma^n(\underline{z}),\underline{y})<\varepsilon\}.
    \end{align*}
    Then we introduce the \textit{Ma\~n\'e potential}
    $S_{\varphi}$ given by
    \begin{align*}
        S_\varphi(\underline{x},\underline{y})=\lim_{\varepsilon\to 0 }\widehat{S}_\varphi(\underline{x},\underline{y};\varepsilon),
    \end{align*}
\end{definition}
The Ma\~n\'e potential originates from the context of the Aubry-Mather theory for Euler-Lagrange flows.

\begin{definition}[Aubry set]\label{aubry_set}
    The set $\Omega_\varphi=\{\underline{x}\in X \mid S_\varphi(\underline{x},\underline{x})=0\}$
    is called the \textit{Aubry set} of $\varphi$.
\end{definition}

Note that $\Omega_\varphi$ for a Lipschitz function $\varphi$ is non-empty, $\sigma$-invariant, and compact (see the remark after Proposition~\ref{prop:action_minimizing_aubry}).
Next, let us define Peierl's barrier.
\begin{definition}[Peierl's barrier]
    For a Lipschitz function $\varphi:X\rightarrow \mathbb{R}$ and $\varepsilon>0$, define the \textit{Peierl's barrier} $H_\varphi:X\times X\rightarrow \mathbb{R}\cup\{\infty\}$ as
    \begin{align*}
        H_\varphi(\underline
        {x},\underline{y})=\lim_{\varepsilon\to 0} \widehat{H}_\varphi(\underline{x},\underline{y};\varepsilon),
    \end{align*}
    where
    \begin{align*}
         \widehat{H}_\varphi(\underline{x}, \underline{y};\varepsilon)=\liminf_{n\to \infty}\left\{\sum_{i=0}^{n-1} \left(\varphi\circ\sigma^i(\underline{z})-\alpha_{\varphi}\right) \mid \underline{z}\in B(\underline{x},\underline{y},n;\varepsilon)\right\}.
    \end{align*}
\end{definition}

\begin{remark}
We give comments on $S_{\varphi}$ and $H_{\varphi}$.
\begin{enumerate}[(i)]
    \item Note that $S_\varphi$ does not take $-\infty$ if $\varphi$ is Lipschitz.
    Actually, for a Lipschitz function $\varphi: X\rightarrow \mathbb{R}$, we have
    \begin{align}
        S_\varphi(\underline{x},\underline{y})\geq u(\underline{y})-u(\underline{x})
        \label{eq:mane_calibrated}
    \end{align}
    for a Lipschitz subaction $u$ of $\varphi$ and $\underline{x},\underline{y}\in X$ (see Lemma 2.6 in \cite{KMS25}).
    \item The Peierl's barrier defined as above may take $\infty$. One can see that it holds that $H_{\varphi}(1^\infty, 1^\infty)=\infty$ for $\varphi(\underline{x})=x_0$ (see Remark 2.10 in \cite{KMS25}).
\end{enumerate}
\end{remark}
The Peierl's barrier has the following upper bound.
\begin{theorem}[Theorem 2.11 in \cite{KMS25}]
\label{theorem:H_finite}
    Let $\varphi:X\rightarrow \mathbb{R}$ be a Lipschitz function with a Lipschitz constant $L_\varphi>0$.
    For $\underline{x}\in \Omega_\varphi$ and $\underline{y}\in X$, we have 
    $H_\varphi(\underline{x},\underline{y})\leq L_\varphi d_X(\underline{x},\underline{y})$.
    In particular, it holds that $\underline{x}\in \Omega_\varphi$ if and only if 
    $H_\varphi(\underline{x},\underline{x})=0$.
\end{theorem}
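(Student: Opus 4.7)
The plan is to bound $H_\varphi(\underline{x},\underline{y})$ from above by building explicit competitors in the balls $B(\underline{x},\underline{y},n;\varepsilon)$, and then to extract the equivalence at $\underline{y}=\underline{x}$ from that bound combined with the elementary comparison $H_\varphi\geq S_\varphi$ and the subaction lower bound \eqref{eq:mane_calibrated}. The engine behind the construction is the absence of grammar restrictions in the full shift $[0,1]^{\mathbb{N}_0}$, which permits arbitrary splicing of a finite word with any tail.

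The first step is to extract a sequence of almost-closed orbit pieces at $\underline{x}$ of unbounded length. The hypothesis $\underline{x}\in\Omega_\varphi$ unpacks to $S_\varphi(\underline{x},\underline{x};\varepsilon_1)\leq 0$ for every $\varepsilon_1>0$, so for each $k$ I can pick $n_k\geq 1$, $\varepsilon_k\downarrow 0$, $\delta_k\downarrow 0$, and $\underline{w}^{(k)}\in B(\underline{x},\underline{x},n_k;\varepsilon_k)$ whose Birkhoff sum of $\varphi-\alpha_\varphi$ is less than $\delta_k$. A compactness dichotomy then lets me arrange $n_k\to\infty$: if some subsequence of $n_k$ stayed bounded, then $\underline{w}^{(k)}\to\underline{x}$ together with continuity of $\sigma^n$ would exhibit $\underline{x}$ as an $n$-periodic point satisfying $\sum_{i=0}^{n-1}\varphi\circ\sigma^i(\underline{x})=n\alpha_\varphi$, and I would instead take $\underline{w}^{(k)}:=\underline{x}$ with $n_k:=kn$, whose action is identically zero.

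Next I splice $\underline{y}$ onto the tail by setting
\[
\underline{z}^{(k)}:=w^{(k)}_0\,w^{(k)}_1\cdots w^{(k)}_{n_k-1}\,y_0\,y_1\,y_2\cdots.
\]
The key observation is that $\underline{z}^{(k)}$ and $\underline{w}^{(k)}$ share their first $n_k$ coordinates, so for $0\leq i\leq n_k-1$ a direct coordinatewise identity gives
\[
d_X\bigl(\sigma^i(\underline{z}^{(k)}),\sigma^i(\underline{w}^{(k)})\bigr)=\frac{d_X(\underline{y},\sigma^{n_k}(\underline{w}^{(k)}))}{2^{n_k-i}}\leq\frac{d_X(\underline{x},\underline{y})+\varepsilon_k}{2^{n_k-i}},
\]
which, after applying the Lipschitz bound on $\varphi$ and summing the geometric series $\sum_{j=1}^{n_k}2^{-j}<1$, yields
\[
\sum_{i=0}^{n_k-1}\bigl(\varphi\circ\sigma^i(\underline{z}^{(k)})-\alpha_\varphi\bigr)\leq\delta_k+L_\varphi\bigl(d_X(\underline{x},\underline{y})+\varepsilon_k\bigr).
\]
Because $\sigma^{n_k}(\underline{z}^{(k)})=\underline{y}$ exactly and $d_X(\underline{z}^{(k)},\underline{x})\leq\varepsilon_k+2^{-n_k}$, the competitor $\underline{z}^{(k)}$ lies in $B(\underline{x},\underline{y},n_k;\varepsilon)$ for all large $k$; taking the $\liminf$ in $n$ and then letting $\varepsilon\downarrow 0$ in the definition of $H_\varphi$ delivers $H_\varphi(\underline{x},\underline{y})\leq L_\varphi d_X(\underline{x},\underline{y})$.

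For the final assertion, specializing to $\underline{y}=\underline{x}$ gives $H_\varphi(\underline{x},\underline{x})\leq 0$, while the matching lower bound $H_\varphi(\underline{x},\underline{x})\geq S_\varphi(\underline{x},\underline{x})\geq 0$ (from $\liminf_n\geq\inf_n$ on each $\varepsilon$-level and \eqref{eq:mane_calibrated}) yields equality. Conversely, $H_\varphi(\underline{x},\underline{x})=0$ sandwiches $S_\varphi(\underline{x},\underline{x})$ to $0$, i.e., $\underline{x}\in\Omega_\varphi$. The main technical obstacle I anticipate is the extraction step of the second paragraph, namely driving $n_k\to\infty$ while simultaneously forcing both the approximation error and the action to zero; the periodic/aperiodic dichotomy is conceptually clean, but making it rigorous requires the subsidiary fact that any periodic point lying in $\Omega_\varphi$ automatically carries an optimizing Dirac-on-orbit measure.
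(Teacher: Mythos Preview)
The present paper does not supply its own proof of this theorem: it is quoted verbatim from the authors' earlier work \cite{KMS25} (as Theorem~2.11 there) and is used here only as a black box. Consequently there is no in-paper argument to compare your proposal against.

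That said, your argument is correct. The splicing construction $\underline{z}^{(k)}=w^{(k)}_0\cdots w^{(k)}_{n_k-1}\underline{y}$ together with the exact identity
\[
d_X\bigl(\sigma^i(\underline{z}^{(k)}),\sigma^i(\underline{w}^{(k)})\bigr)=2^{-(n_k-i)}\,d_X\bigl(\underline{y},\sigma^{n_k}(\underline{w}^{(k)})\bigr)
\]
and the geometric-series summation is the standard route to this bound in full-shift settings, and your handling of the $\liminf$ versus $\inf$ distinction (forcing $n_k\to\infty$ via the periodic/aperiodic dichotomy) is exactly what is needed to pass from $S_\varphi(\underline{x},\underline{x})=0$ to control of $H_\varphi$. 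One small remark on the worry you flag at the end: the ``subsidiary fact'' is more elementary than you suggest. Once the bounded-subsequence branch has exhibited $\underline{x}$ as $n$-periodic with $\sum_{i=0}^{n-1}(\varphi\circ\sigma^i(\underline{x})-\alpha_\varphi)\leq 0$, the reverse inequality is immediate from the observation that the periodic-orbit measure $\frac{1}{n}\sum_{i=0}^{n-1}\delta_{\sigma^i(\underline{x})}$ is $\sigma$-invariant, hence its $\varphi$-integral is at least $\alpha_\varphi$; no appeal to membership in $\Omega_\varphi$ is required at that point.
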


Now, we describe another point of view for the Aubry set.
A positive-semi orbit $\{\sigma^n(\underline{x})\}_{n\in\N_0}$ is said to be
\begin{itemize}
\item $\varphi$-semi-static: if for any non-negative integers $i<j$
\begin{align}\label{eqn:semi_static}
	\sum_{n=i}^{j-1} \left(\varphi\circ\sigma^n(\underline{x})-\alpha_{\varphi}\right)=S_\varphi(\sigma^{i}(\underline{x}),\sigma^j(\underline{x})).
\end{align}
\item $\varphi$-static: if for any non-negative integers $i<j$
\begin{align*}
	\sum_{n=i}^{j-1} \left(\varphi\circ\sigma^n(\underline{x})-\alpha_{\varphi}\right)=-S_\varphi(\sigma^{j}(\underline{x}),\sigma^i(\underline{x})).
\end{align*}
\end{itemize}
Set
\begin{align*}
	N_{\varphi}&:=\{\sigma^k(\underline{x})\in X \mid \{\sigma^n(\underline{x})\}_{n\in\N_0}\ \text{is}\ \varphi\text{-semi-static}, k\in\N_0\},\\
	A_{\varphi}&:=\{\sigma^k(\underline{x})\in X \mid \{\sigma^n(\underline{x})\}_{n\in\N_0}\ \text{is}\ \varphi\text{-static}, k\in\N_0\}.
\end{align*}
These sets are derived from Ma\~{n}\'{e}'s work for Euler-Lagrange flows. Following the context of the Aubry-Mather theory for Euler-Lagrange flows, we call $N_\varphi$ the \textit{Ma\~{n}\'{e} set} for $\varphi$.
From the following proposition, $A_\varphi$ can be referred as the Aubry set for $\varphi$, while $N_\varphi$ and $A_\varphi$ appear in \cite{KMS25} as the \textit{$\varphi$-semi-static set} and \textit{$\varphi$-static set} for $\varphi$ respectively.
\begin{proposition}[Theorem 3.5 in \cite{KMS25}]
\label{prop:action_minimizing_aubry}
For each Lipschitz function $\varphi \colon X \to \R$, we have
    \[
    A_{\varphi}= \Omega_{\varphi}.
    \]
\end{proposition}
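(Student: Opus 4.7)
The plan is to verify the two inclusions $A_\varphi \subseteq \Omega_\varphi$ and $\Omega_\varphi \subseteq A_\varphi$ separately, using the triangle inequality for the Ma\~n\'e potential $S_\varphi$, the subaction bound (\ref{eq:mane_calibrated}), and the long near-closed pseudo-orbits that points of $\Omega_\varphi$ admit thanks to $H_\varphi(\underline{x}, \underline{x}) = 0$ (Theorem~\ref{theorem:H_finite}).

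The inclusion $A_\varphi \subseteq \Omega_\varphi$ is the short step. Given $\underline{y} = \sigma^k(\underline{x}) \in A_\varphi$, the $\varphi$-static identity for $(i,j) = (k, k+m)$ reads
\[
\sum_{n=0}^{m-1}\bigl(\varphi\circ\sigma^n(\underline{y}) - \alpha_\varphi\bigr) = -S_\varphi(\sigma^m(\underline{y}), \underline{y}).
\]
Because the actual forward orbit of $\underline{y}$ is a pseudo-orbit of arbitrarily small tolerance, $S_\varphi(\underline{y}, \sigma^m(\underline{y})) \leq \sum_{n=0}^{m-1}(\varphi\circ\sigma^n(\underline{y}) - \alpha_\varphi)$, and combining these yields $S_\varphi(\underline{y}, \sigma^m(\underline{y})) + S_\varphi(\sigma^m(\underline{y}), \underline{y}) \leq 0$. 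The triangle inequality for $S_\varphi$ then forces $S_\varphi(\underline{y}, \underline{y}) \leq 0$, while (\ref{eq:mane_calibrated}) applied with $\underline{x}=\underline{y}$ gives $S_\varphi(\underline{y}, \underline{y}) \geq 0$; hence $\underline{y} \in \Omega_\varphi$.

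The converse inclusion $\Omega_\varphi \subseteq A_\varphi$ is the substantive part. Given $\underline{x} \in \Omega_\varphi$, I will show that the forward orbit of $\underline{x}$ is itself $\varphi$-static, so that $\underline{x} = \sigma^0(\underline{x}) \in A_\varphi$. By the $\sigma$-invariance of $\Omega_\varphi$, it suffices to fix $j \geq 1$ and prove
\[
\Sigma_{0j} := \sum_{n=0}^{j-1}\bigl(\varphi\circ\sigma^n(\underline{x}) - \alpha_\varphi\bigr) = -S_\varphi(\sigma^j(\underline{x}), \underline{x}).
\]
Since $H_\varphi(\underline{x}, \underline{x}) = 0$, I pick pseudo-loops $\underline{z}_k \in B(\underline{x}, \underline{x}, p_k; \varepsilon_k)$ with $p_k \to \infty$, $\varepsilon_k \to 0$ and total cost tending to $0$, and split each loop at time $j$. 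Lipschitz continuity of $\varphi$ together with $\underline{z}_k \to \underline{x}$ makes the cost of the first $j$ steps tend to $\Sigma_{0j}$, so the tail cost tends to $-\Sigma_{0j}$. Because $\sigma^j$ is $2^j$-Lipschitz, the tail point $\sigma^j(\underline{z}_k)$ lies in $B(\sigma^j(\underline{x}), \underline{x}, p_k - j; 2^j\varepsilon_k)$, hence its cost is bounded below by $S_\varphi(\sigma^j(\underline{x}), \underline{x}; 2^j\varepsilon_k)$; letting $k \to \infty$ and using $\lim_{\varepsilon \to 0} S_\varphi(\,\cdot\,,\,\cdot\,;\varepsilon) = S_\varphi(\,\cdot\,,\,\cdot\,)$ delivers $\Sigma_{0j} \leq -S_\varphi(\sigma^j(\underline{x}), \underline{x})$. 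The reverse inequality follows from the actual-orbit bound $\Sigma_{0j} \geq S_\varphi(\underline{x}, \sigma^j(\underline{x}))$ combined with the triangle inequality $S_\varphi(\underline{x}, \sigma^j(\underline{x})) + S_\varphi(\sigma^j(\underline{x}), \underline{x}) \geq S_\varphi(\underline{x}, \underline{x}) = 0$ (the latter being $0$ since $\underline{x} \in \Omega_\varphi$).

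The main obstacle is the bookkeeping in this splitting-and-limit step: one must track how the Lipschitz constant $2^j$ of $\sigma^j$ interacts with $\varepsilon_k \to 0$ (which is harmless for fixed $j$), and verify that the cost of the first $j$ steps of $\underline{z}_k$ genuinely converges to $\Sigma_{0j}$, via Lipschitz continuity of $\varphi$ and the convergence of the initial block of coordinates. Once these routine estimates are in place, the two inclusions assemble into $A_\varphi = \Omega_\varphi$.
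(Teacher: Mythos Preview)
The paper does not supply its own proof of this proposition: it is quoted verbatim as Theorem~3.5 of \cite{KMS25}, so there is no in-paper argument to compare against. Your proof is self-contained and correct in outline; the splitting-and-limit argument for $\Omega_\varphi\subseteq A_\varphi$ via pseudo-loops realizing $H_\varphi(\underline{x},\underline{x})=0$ is exactly the natural route, and the estimates you sketch (Lipschitz control on the first $j$ steps, the $2^j$-Lipschitz bound for $\sigma^j$) are the right ones.

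Two small points are worth tightening. First, you invoke the $\sigma$-invariance of $\Omega_\varphi$ to reduce to the case $i=0$, but in the present paper that invariance is asserted only \emph{after} Proposition~\ref{prop:action_minimizing_aubry} and is justified there through the identification $\Omega_\varphi=A_\varphi$ itself; so as written this is circular. The fix is painless: run your pseudo-loop argument for general $0\le i<j$ by taking $\underline w_k=\sigma^j(\underline z_k)$ and using $p_k-j+i$ steps, so that $\sigma^{p_k-j+i}(\underline w_k)=\sigma^i(\sigma^{p_k}(\underline z_k))$ lies within $2^i\varepsilon_k$ of $\sigma^i(\underline x)$; the tail cost then tends to $-\Sigma_{0j}+\Sigma_{0i}=-\Sigma_{ij}$, and the reverse inequality follows as before since $S_\varphi(\sigma^i\underline x,\sigma^i\underline x)\ge 0$ by \eqref{eq:mane_calibrated}. (In fact $\sigma$-invariance of $\Omega_\varphi$ drops out as a corollary.) Second, you use the triangle inequality for $S_\varphi$; this is standard and is established in \cite{KMS25}, but it is not stated in the present paper, so you should cite it explicitly.
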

The relation among the Mather set $\mathscr{M}_\varphi$, the Aubry set $\Omega_\varphi(=A_\varphi)$ and the Ma\~{n}\'{e} set $N_\varphi$ is summarized in Theorem~\ref{theorem:KMS25}(i).
By Theorem~\ref{theorem:KMS25}(i) and the definitions of $A_\varphi$ and $N_\varphi$, both the Aubry set and the Ma\~{n}\'{e} set for a Lipschitz continuous function $\varphi$ on $X$ are non-empty and $\sigma$-invariant.
In the case of Euler-Lagrange flows, the Aubry set and the Ma\~{n}\'{e} set associated with a so-called Tonelli Lagrangian are compact invariant sets for their associated flows. However, in our setting, compactness (more precisely, closedness) holds only for the Aubry set $\Omega_\varphi$, and the Ma\~{n}\'{e} set $N_\varphi$ may not be closed since $S_\varphi$ is only lower semicontinuous on $X\times X$ (Proposition 2.7 in \cite{KMS25}).
Actually, as a byproduct of Main Theorem ~\ref{thm:semi-static}, we will see that $N_\varphi$ cannot be closed in some cases (see Example~\ref{ex:nonclosed}).

We now give a connection between these invariant sets $A_\varphi,N_\varphi$ and subactions.
\begin{definition}[Subaction]
\label{calibrated_subaction}
    A continuous function $u:X\rightarrow \mathbb{R}$ is called a {\it subaction} of $\varphi$ if
    \begin{align*}
        u(\underline{x})+\varphi(\underline{x})\geq u(\sigma \underline{x})+\alpha_\varphi
    \end{align*}
    for every $\underline{x}\in X$.
\end{definition}

\begin{lemma}[Proposition 3.6 in \cite{KMS25}]
\label{lemm:subaction_static}
Let $u$ be a Lipschitz subaction of a Lipschitz function $\varphi:X\to\R$.
If $\underline{x}\in A_\varphi$, then
    \begin{align}
    \label{each_static}
    u(\sigma^{k+1}(\underline{x}))-u(\sigma^{k}(\underline{x}))=\varphi(\sigma^k(\underline{x}))-\alpha_{\varphi}
    \end{align}
    for all $k\in\N_0$.
    Conversely, if \eqref{each_static} holds for each $k\in\N_0$, then $\underline{x}\in N_\varphi$.
\end{lemma}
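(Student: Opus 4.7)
The plan is to prove both directions by leveraging the telescoping structure of the subaction inequality together with the basic estimate \eqref{eq:mane_calibrated} and the definition of the Ma\~n\'e potential. The key tool in both implications is the chain
\[
u(\sigma^{n+1}(\underline{x}))-u(\sigma^n(\underline{x}))\leq \varphi(\sigma^n(\underline{x}))-\alpha_\varphi,\qquad n\in\N_0,
\]
which follows immediately from the subaction condition in Definition~\ref{calibrated_subaction}, and whose summation telescopes neatly.

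For the forward implication, suppose $\underline{x}\in A_\varphi$. Fix $k\in\N_0$ and any $j>k$. Summing the subaction inequality from $n=k$ to $n=j-1$ gives
\[
u(\sigma^j(\underline{x}))-u(\sigma^k(\underline{x}))\leq \sum_{n=k}^{j-1}\bigl(\varphi(\sigma^n(\underline{x}))-\alpha_\varphi\bigr).
\]
Because the orbit is $\varphi$-static, the right-hand side equals $-S_\varphi(\sigma^j(\underline{x}),\sigma^k(\underline{x}))$, and then \eqref{eq:mane_calibrated} applied to the pair $(\sigma^j(\underline{x}),\sigma^k(\underline{x}))$ yields
\[
-S_\varphi(\sigma^j(\underline{x}),\sigma^k(\underline{x}))\leq u(\sigma^j(\underline{x}))-u(\sigma^k(\underline{x})).
\]
Hence the summed inequality is actually an equality. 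Since every single-step inequality is of the form $\leq$ and their sum is an equality, each individual step must also be an equality; this is exactly \eqref{each_static} at the given $k$.

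For the converse, assume \eqref{each_static} for every $k\in\N_0$. Telescoping gives, for all $0\leq i<j$,
\[
\sum_{n=i}^{j-1}\bigl(\varphi(\sigma^n(\underline{x}))-\alpha_\varphi\bigr)=u(\sigma^j(\underline{x}))-u(\sigma^i(\underline{x})),
\]
and then \eqref{eq:mane_calibrated} immediately supplies the inequality $\leq S_\varphi(\sigma^i(\underline{x}),\sigma^j(\underline{x}))$. The reverse inequality is produced from the definition of $S_\varphi$ by using the \emph{trivial connecting orbit}: for every $\varepsilon>0$ the point $\underline{z}=\sigma^i(\underline{x})$ lies in $B(\sigma^i(\underline{x}),\sigma^j(\underline{x}),j-i;\varepsilon)$ and realizes the same ergodic sum, so
\[
S_\varphi(\sigma^i(\underline{x}),\sigma^j(\underline{x});\varepsilon)\leq \sum_{n=i}^{j-1}\bigl(\varphi(\sigma^n(\underline{x}))-\alpha_\varphi\bigr),
\]
and letting $\varepsilon\to 0$ yields the matching bound. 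Combining, \eqref{eqn:semi_static} holds for all $i<j$, so $\underline{x}\in N_\varphi$.

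I do not expect any serious obstacle. The only point that demands care is bookkeeping the order of arguments in $S_\varphi$: the $\varphi$-static condition pairs $(\sigma^j,\sigma^i)$ in reverse order while $\varphi$-semi-static pairs $(\sigma^i,\sigma^j)$ in forward order, so one must apply \eqref{eq:mane_calibrated} to the correct pair in each direction of the proof. Everything else is a routine telescoping argument that gains its force from the fact that the subaction inequality, holding at every step, can be squeezed into an equality precisely when the $\varphi$-action is extremal.
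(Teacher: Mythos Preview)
Your proof is correct. The paper itself does not give a proof of this lemma; it is quoted as Proposition~3.6 of \cite{KMS25}, so there is no in-paper argument to compare against. Your forward direction correctly squeezes the telescoped subaction inequality between itself and $-S_\varphi(\sigma^j(\underline{x}),\sigma^k(\underline{x}))$ via the static condition and \eqref{eq:mane_calibrated}, forcing equality at every step; your converse direction correctly bounds the ergodic sum above and below by $S_\varphi(\sigma^i(\underline{x}),\sigma^j(\underline{x}))$ using \eqref{eq:mane_calibrated} and the trivial competitor $\underline{z}=\sigma^i(\underline{x})$ in $B(\sigma^i(\underline{x}),\sigma^j(\underline{x}),j-i;\varepsilon)$. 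A minor simplification: in the forward direction it suffices to take $j=k+1$, which gives the single-step equality directly without appealing to the ``sum of nonstrict inequalities is an equality'' device.
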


From Proposition~\ref{prop:action_minimizing_aubry} and Lemma \ref{lemm:subaction_static}, we immediately obtain the following.
\begin{proposition}\label{bounded}
    If $\underline{x}\in \Omega_\varphi$, then the sequence
    $\displaystyle\left\{\sum_{i=0}^{n-1}\big(\varphi\circ\sigma^i(\underline{x})-\alpha(\varphi)\big)\right\}_{n\in\N_0}$
    is bounded.
\end{proposition}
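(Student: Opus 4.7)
The plan is to realize the partial sums as telescoping differences of a continuous function on the compact space $X$, after which boundedness is immediate.

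First, since $\underline{x}\in\Omega_\varphi$, Proposition~\ref{prop:action_minimizing_aubry} gives $\underline{x}\in A_\varphi$, so its positive semi-orbit is $\varphi$-static. I would then fix a Lipschitz subaction $u$ of $\varphi$; the existence of such a $u$ for any Lipschitz potential on the XY model is the standard Lax--Oleinik-type result established in \cite{KMS25}. Applying Lemma~\ref{lemm:subaction_static} to this $u$ and our $\underline{x}\in A_\varphi$ yields the pointwise identity
\[
u(\sigma^{k+1}(\underline{x})) - u(\sigma^{k}(\underline{x})) = \varphi(\sigma^{k}(\underline{x})) - \alpha_\varphi
\]
for every $k\in\N_0$.

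Next, I would sum this identity telescopically from $k=0$ to $k=n-1$, which rewrites the partial sum as
\[
\sum_{i=0}^{n-1}\bigl(\varphi\circ\sigma^{i}(\underline{x}) - \alpha_\varphi\bigr) = u(\sigma^{n}(\underline{x})) - u(\underline{x}).
\]
Since $X=[0,1]^{\N_0}$ is compact by Tychonoff's theorem and $u$ is continuous, $u$ is bounded on $X$, so the right-hand side is bounded uniformly in $n\in\N_0$. This is exactly the claim.

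The only step needing more than a line is the invocation of a Lipschitz (or even just continuous) subaction, which I would import from \cite{KMS25}; beyond that citation, the argument is a one-line telescoping identity followed by a compactness observation, and I do not anticipate any genuine obstacle.
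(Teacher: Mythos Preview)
Your proposal is correct and mirrors the paper's own proof essentially line for line: use Proposition~\ref{prop:action_minimizing_aubry} to pass to $A_\varphi$, invoke Lemma~\ref{lemm:subaction_static} with a Lipschitz subaction, telescope to $u(\sigma^n(\underline{x}))-u(\underline{x})$, and conclude by continuity on the compact space $X$. The only cosmetic difference is that you are explicit about citing \cite{KMS25} for the existence of a Lipschitz subaction and about Tychonoff for compactness, whereas the paper leaves these implicit.
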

\begin{proof}
    Let $\underline{x}\in\Omega_\varphi$.
    By Proposition~\ref{prop:action_minimizing_aubry}, we have $\underline{x}\in A_\varphi$.
    By taking the subaction $u$ in Lemma~\ref{lemm:subaction_static},
    we have
    \begin{align*}
    \sum_{k=0}^{n-1}(\varphi(\sigma^k(\underline{x}))-\alpha_{\varphi})=
    \sum_{k=0}^{n-1} \big(u(\sigma^{k+1}(\underline{x}))-u(\sigma^{k}(\underline{x}))\big)=
        u(\sigma^{n}(\underline{x}))-u(\underline{x}),
    \end{align*}
    which implies that the sequence
    \[\left\{\sum_{i=0}^{n-1}\big(\varphi\circ\sigma^i(\underline{x})-\alpha_\varphi \big)\right\}_{n\in\N_0}=
    \left\{u(\sigma^{n}(\underline{x}))-u(\underline{x})\right\}_{n\in\N_0}\]
    is bounded
    since $u:[0,1]^{\N_0}\to\R$ is continuous.
\end{proof}

We next describe several new relations between $\mathscr{M}_\varphi, \Omega_\varphi$ and $N_\varphi$ for Lipscitz continuous functions $\varphi$ on $X$ in the setting of the full shift with $[0,1]$, which is not discussed in \cite{KMS25}.
\begin{proposition}\label{prop:action_minimizing_periodic} For a Lipschitz continuous function $\varphi$ on $X$, it holds that
\[
\mathscr{M}_{\varphi}^{\mathrm{p}}
=\Omega_{\varphi}^{\mathrm{p}}=N_{\varphi}^{\mathrm{p}}
\]
where $\mathscr{M}_{\varphi}^{\mathrm{p}}, \Omega_{\varphi}^{\mathrm{p}}$ and $N_{\varphi}^{\mathrm{p}}$ represent the sets of periodic points in $\mathscr{M}_{\varphi}, \Omega_{\varphi}$ and $N_\varphi$ respectively.
\end{proposition}
\begin{proof}
    From Theorem~\ref{theorem:KMS25}, we immediately have $\mathscr{M}_{\varphi}^{\mathrm{p}}
\subset \Omega_{\varphi}^{\mathrm{p}}\subset N_{\varphi}^{\mathrm{p}}$.
    Now we show that $N_{\varphi}^{\mathrm{p}}\subset \mathscr{M}_{\varphi}^{\mathrm{p}}$.
    Take $\underline{y} \in N_\varphi^\mathrm{p}$ and let $\tau\in\N$ be the minimal period of $\underline{y}$.
    Then the definition of $N_\varphi$ implies that
    \begin{align*}
    S_\varphi(\underline{y},\sigma^{\tau}(\underline{y}))=\sum_{i=0}^{\tau-1} (\varphi\circ\sigma^i(\underline{y})-\alpha_{\varphi}),\quad
S_\varphi(\underline{y},\sigma^{2\tau}(\underline{y}))=\sum_{i=0}^{2\tau-1} (\varphi\circ\sigma^i(\underline{y})-\alpha_{\varphi}),
    \end{align*}
    and the periodicity of $\underline{y}$ yields that
    $S_\varphi(\underline{y},\sigma^{\tau}(\underline{y}))=S_\varphi(\underline{y},\sigma^{2\tau}(\underline{y}))$ and thus
    $\sum_{i=0}^{\tau-1} (\varphi\circ\sigma^i(\underline{y})-\alpha_{\varphi})=0$.
    Therefore, the periodic point $\underline{y}$ satisfies
    \[
    \frac{1}{\tau}\sum_{i=0}^{\tau-1} \varphi\circ\sigma^i(\underline{y})=\alpha_\varphi,
    \]
    which implies that the invariant probability measure evenly distributed on the periodic orbit of $\underline{y}$ is an minimizing measure of $\varphi$, i.e., $\underline{y}\in \mathscr{M}_\varphi^{\mathrm{p}}$.
\end{proof}

In the setting of the full shift with $[0,1]$, we can obtain the following explicit relation of the Ma\~{n}\'{e} set and the Mather set (and the Aubry set), which tells us that $N_\varphi$ is much large so that it is dense in the whole space $X$ (see also Example~\ref{ex:nonclosed}) {and contains cubes of any finite dimension.}

\begin{theorem}[a part of Main Theorem~\ref{thm:semi-static}]
\label{thm:mane-set}
Let $\varphi$ be a Lipschitz continuous function on $X$.
Then it holds that
\[
N_{\varphi}=N_1 \sqcup N_2
\]
where
\begin{align*}
    N_1&=\{ \underline{x} \in X \mid \sigma^{i}(\underline{x}) \neq \sigma^{j}(\underline{x}) \ \text{if} \ i \neq j \}, \ \text{and} \\
    N_2
    &=\bigcup_{M\in\N_0}\sigma^{-M}(\mathscr{M}_{\varphi}^{\mathrm{p}})
    =  \bigcup_{M\in\N_0}\sigma^{-M}(\Omega_{\varphi}^{\mathrm{p}})
\end{align*}
\end{theorem}
We remark that $N_2$ (resp. $N_1$) corresponds to the eventually periodic (resp. non eventually periodic) part of the Ma\~{n}\'{e} set $N_\varphi$ of $\varphi$.

Before the proof of Theorem~\ref{thm:mane-set}, we need the following lemma, which implies that the minimizing ``connecting orbit" from $\underline{x}$ to $\sigma^k(\underline{x})$ is realized by the finite segment of $\underline{x}$ itself without repetition.
\begin{lemma}
\label{lemm:finite-sum}
    For any $\underline{x}\in X$ and $k\in \mathbb{N}$, $S_\varphi(\underline{x}, \sigma^k(\underline{x}))$ attains a finite sum, i.e.,
    \begin{align}\label{eqn:finite_minimal}
    S_\varphi(\underline{x}, \sigma^k(\underline{x}))=
    \sum_{i=0}^{l-1} (\varphi\circ\sigma^i(\underline{x})-\alpha_{\varphi}), \  
    \end{align}
    where
    \[
    l = \min \{m \in \N \mid \sigma^{m}(\underline{x}) = \sigma^k(\underline{x}) \}.
    \]
\end{lemma}
\begin{proof}
    It is easy to see that $\sigma^{l}(\underline{x}) = \sigma^k(\underline{x})$ and
    \[
    S_\varphi(\underline{x},\sigma^k(\underline{x}))
    =S_\varphi(\underline{x},\sigma^l(\underline{x}))\le \sum_{i=0}^{l-1} (\varphi\circ\sigma^i(\underline{x})-\alpha_{\varphi}).
    \]
    We now prove the opposite inequality.
    
    Take $\{n_j\}$ (not necessarily unbounded) and ${\underline{z}^{(j)}}\in B(\underline{x},\sigma^k(\underline{x}),n_j;2^{-j})$ such that
    \[
      \lim_{j\to\infty} {\sum_{i=0}^{n_j-1} (\varphi\circ\sigma^i(\underline{z}^{(j)})-\alpha_{\varphi})}=S_\varphi(\underline{x},\sigma^{k}(\underline{x})).
    \]
     We first show that {$n_j\ge l$} for sufficiently large $j$.
     Let
     \[
     \delta:=\min_{i=1,\ldots, l-1} d(\sigma^l(\underline{x}),\sigma^i(\underline{x})),
     \]
     which is positive by the choice of $l$.
     Take sufficiently large $j\in\N$ so that $2^{-j+l}<\delta$.
     Assume that {$n_j< l$}.
     From the conditions
     $d(\underline{z}^{(j)},\underline{x})<2^{-j}$
     and $d(\sigma^{n_j}(\underline{z}^{(j)}),\sigma^k(\underline{x}))<2^{-j}$, we compute
     \begin{align*}
     \delta
     &\le d(\sigma^l(\underline{x}),\sigma^{n_j}(\underline{x}))\\
     &\le
     d(\sigma^l(\underline{x}),\sigma^{n_j}(\underline{z}^{(j)}))+d(\sigma^{n_j}(\underline{z}^{(j)}),\sigma^{n_j}(\underline{x}))\\
     &< 2^{-j}+2^{-j+n_j}\le  2^{-j+l}<\delta,
     \end{align*}
which yields a contradiction.

Now we fix sufficiently large $j\in\N$ so that {$n_j\ge l$}.
If {$n_j>l$}, we have $\sigma^{l}(\underline{z}^{(j)})\in B(\sigma^l(\underline{x}),\sigma^k(\underline{x}),n_j-l;2^{-j+l})=B(\sigma^l(\underline{x}),\sigma^l(\underline{x}),n_j-l;2^{-j+l})$, and thus it holds that
\begin{align*}
S_\varphi&(\underline{x},\sigma^k(\underline{x}))\\
    &=\lim_{j\to\infty} \left({\sum_{i=0}^{l-1} (\varphi\circ\sigma^i(\underline{z}^{(j)})-\alpha_{\varphi})}+{\sum_{i=l}^{n_j-1} (\varphi\circ\sigma^i(\underline{z}^{(j)})-\alpha_{\varphi})}\right)\\
    &\ge \lim_{j\to\infty} \left({\sum_{i=0}^{l-1} (\varphi\circ\sigma^i(\underline{x})-\alpha_{\varphi})}-\kappa {L_\varphi}2^{-j+l}+S_{\varphi}(\sigma^l(\underline{x}),\sigma^l(\underline{x});2^{-j+l})\right)\\
    &= {\sum_{i=0}^{l-1} (\varphi\circ\sigma^i(\underline{x})-\alpha_{\varphi})}+S_{\varphi}(\sigma^l(\underline{x}),\sigma^l(\underline{x}))\\
    &\ge {\sum_{i=0}^{l-1} (\varphi\circ\sigma^i(\underline{x})-\alpha_{\varphi})}
\end{align*}
for some constant $\kappa>0$, which only depends on $l$.
Here we use \eqref{eq:mane_calibrated} in the last inequality.
The second term of the first line of the above inequalities does not appear when $n_j=l$, and we immediately obtain the claim.
\end{proof}

Now we are ready to prove Theorem \ref{thm:mane-set}.
\begin{proof}[{Proof of Theorem \ref{thm:mane-set}}]
    $N_1\subset N_\varphi$ immediately follows from the definition of $N_1$ and Lemma~\ref{lemm:finite-sum}.
    We now prove $N_2\subset N_\varphi$ by induction.
    By Theorem~\ref{theorem:KMS25}, we have
    $\mathscr{M}_\varphi^{\mathrm{p}}\subset N_\varphi$.
    Assume that $\sigma^{-M}(\mathscr{M}_\varphi^{\mathrm{p}})\subset N_\varphi$ for some $M\in\N_0$. Consider $\underline{x}=b\underline{x}'$ with any $b\in[0,1]$ and $x'\in\sigma^{-M}(\mathscr{M}_\varphi^{\mathrm{p}})$.
    Since $\sigma^{-M}(\mathscr{M}_\varphi^{\mathrm{p}})\subset N_\varphi$,
    we see that \eqref{eqn:semi_static} holds for any $i>0$ and $j>i$.
    Note that, in particular, it holds that
    \begin{align}\label{eqn:Mane_induction}
    \sum_{l=1}^{j-1} (\varphi\circ\sigma^l(\underline{x})-\alpha_{\varphi})=S_\varphi(\sigma(\underline{x}),\sigma^j(\underline{x}))
    \end{align}
    for all $j\in\N$.
	For $i=0$ and any $j\in\N$, from a similar discussion as in the proof of Lemma~\ref{lemm:finite-sum}, taking $\{n_k\}_{k\in\N}\subset\N$ (not necessarily unbounded) and $z^{(k)}\in B(\underline{x},\sigma^j(\underline{x}),n_k;2^{-k})$ such that
      \[
      \lim_{k\to\infty} \sum_{i=0}^{n_k-1} \left(\varphi\circ\sigma^i(\underline{z}^{(k)})-\alpha_{\varphi}\right)=S_\varphi(\underline{x},\sigma^{j}(\underline{x})),
      \]
      we have $\sigma(z^{(k)})\in B(\sigma(\underline{x}),\sigma^j(\underline{x}),n_k-1;2^{-k+1})$
      and thus
    \begin{align*}
S_\varphi(\underline{x},\sigma^j(\underline{x}))&=\lim_{j\to\infty} \left\{ (\varphi(\underline{z}^{(k)})-\alpha_{\varphi})+\sum_{l=1}^{n_k-1} (\varphi\circ\sigma^l(\underline{z}^{(k)})-\alpha_{\varphi})\right\}\\
    &\ge \lim_{j\to\infty} \left\{(\varphi(\underline{x})-\alpha_{\varphi})- L_\varphi 2^{-j}+S_{\varphi}(\sigma(\underline{x}),\sigma^j(\underline{x});2^{-j+1})\right\}\\
    &= \varphi(\underline{x})-\alpha_{\varphi}+S_{\varphi}(\sigma(\underline{x}),\sigma^j(\underline{x}))\\
    & =\sum_{l=0}^{j-1} (\varphi\circ\sigma^l(\underline{x})-\alpha_{\varphi}).
\end{align*}
Here we use \eqref{eqn:Mane_induction} in the last equality.
    The opposite inequality follows from the definition of $S_\varphi$, which completes the proof of $N_2\subset N_\varphi$.

    We now show that $N_{\varphi}\subset N_1\cup N_2$.
    Take $\underline{x}\in N_\varphi$ and assume that $\underline{x}\notin N_1\cup N_2$.
    From $\underline{x}\notin N_1$, there exist two positive integers $\tau_1<\tau_2$ such that $\sigma^{\tau_1}(\underline{x})=\sigma^{\tau_2}(\underline{x})$.
    Then $\underline{y}=\sigma^{\tau_1}(\underline{x})$ is $\tau:=\tau_2-\tau_1$ periodic and belongs to $N_\varphi$ by the invariance of $N_\varphi$.
    By Proposition~\ref{prop:action_minimizing_periodic},
    we deduce that $\underline{x}\in \sigma^{-\tau}(\mathscr{M}_\varphi^{\mathrm{p}})$ holds but this contradicts to $\underline{x}\notin N_2$.
\end{proof}

From Theorem~\ref{thm:mane-set}, we can construct an example where the Ma\~{n}\'{e} set is not closed.
Notice that any cylinder set contains an element in $N_2$.

\begin{example}[The non-closed Ma\~{n}\'{e} set]
\label{ex:nonclosed}
Set
$\varphi(\underline{x})=g(x,y)=(x-y)^2+x^2\in \mathscr{H}.$
    Then $N_{\varphi}$ is not closed.
     It is easy to see $g^*:=\min_{a\in[0,1]} g(a,a)=0$ and $\mathrm{m}_g=\{0\}$.
         If $N_\varphi$ is closed, by Theorem~\ref{thm:mane-set},
    it holds that $N_{\varphi}=X$.
    On the other hand, it holds that $1^\infty\notin N_g$
    since the right-hand side of \eqref{eqn:semi_static} with any $0\le i<j$ 
    is $S_g(1^\infty,1^\infty)\le g(1,1)-g^*=1$
    but the left-hand side of \eqref{eqn:semi_static} with any $0\le i<j$ such that $j-i>1$ becomes
    \[
    \sum_{k=i}^{j-1}(g(1,1)-g^*)=j-i>1.
    \]
\end{example}

\begin{remark}
    Since $S_\varphi$ is lower semi-continuous (Proposition 2.7 in \cite{KMS25}), we see that if $S_\varphi$ is upper semi-continuous, then the continuity of $S_\varphi$ holds, and thus $N_\varphi$ is closed (even if $\varphi \in \mathcal{H}$ is not assumed).
    We remark that, in the setting of the Euler-Lagrange flow, the Ma\~{n}\'{e} set is always closed since the corresponding Ma\~{n}\'{e} potential is continuous.
\end{remark}

\subsection{Calibrated subactions and an equivalence relation on the Aubry set}\label{sec:calibrated_equivalent}
This subsection describes the equivalence class and its properties for the Aubry set.
Firstly, we introduce the definition of \textit{ calibrated subaction} and refer to the results in \cite{KMS25}, which are analogous to Theorem 4.1 in \cite{BLL13}.

\begin{definition}[Calibrated subaction]
    For a subaction $u$ of $\varphi$, it is called {\it calibrated} if 
    \begin{align*}
        \min_{\sigma(\underline{y})=\underline{x}}(\varphi(\underline{y})+u(\underline{y}))=u(\underline{x})+\alpha_\varphi
    \end{align*}
    for every $\underline{x} \in X$.
\end{definition}
\begin{theorem}[Part of Main Theorem~$2$ in \cite{KMS25}]
    \label{property_Peierl}
    For any $\underline{x}\in\Omega_\varphi$, the map $X\ni \underline{y}\mapsto H_\varphi(\underline{x},\underline{y})$ is a Lipschitz calibrated subaction of $\varphi$.
\end{theorem}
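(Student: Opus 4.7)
The plan is to verify, in order, that $u(\underline{y}) := H_\varphi(\underline{x}, \underline{y})$ is Lipschitz, satisfies the subaction inequality, and is calibrated; each step relies on pseudo-orbit surgery on near-optimal orbits in $B(\underline{x}, \underline{y}, n; \varepsilon)$. For Lipschitzness I will compare $u(\underline{y})$ and $u(\underline{y}')$ by tail replacement: given $\underline{z} \in B(\underline{x}, \underline{y}', n; \varepsilon)$, define $\underline{z}' := z_0 z_1 \cdots z_{n-1} y_0 y_1 \cdots$, so that $\sigma^n \underline{z}' = \underline{y}$ exactly. A geometric-series estimate gives $d_X(\sigma^i \underline{z}, \sigma^i \underline{z}') \leq 2^{-(n-i)}(\varepsilon + d_X(\underline{y}, \underline{y}'))$ for $0 \leq i \leq n-1$, so the Birkhoff sums differ by at most $L_\varphi (\varepsilon + d_X(\underline{y}, \underline{y}'))$ while $\underline{z}' \in B(\underline{x}, \underline{y}, n; 2\varepsilon)$ for $n$ large. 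Taking $\liminf_n$ and then letting $\varepsilon \to 0$ yields $u(\underline{y}) \leq u(\underline{y}') + L_\varphi\, d_X(\underline{y}, \underline{y}')$, and symmetry gives the $L_\varphi$-Lipschitz bound.

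For the subaction inequality, take $\underline{z}_k \in B(\underline{x}, \underline{y}, n_k; \varepsilon)$ with $n_k \to \infty$ whose Birkhoff sums approach $H_\varphi(\underline{x}, \underline{y}; \varepsilon)$. Since $\sigma$ expands $d_X$ by at most a factor of $2$, the same $\underline{z}_k$ lies in $B(\underline{x}, \sigma\underline{y}, n_k + 1; 2\varepsilon)$, and the extra Birkhoff term $\varphi(\sigma^{n_k}\underline{z}_k) - \alpha_\varphi$ is within $L_\varphi \varepsilon$ of $\varphi(\underline{y}) - \alpha_\varphi$. Passing to $\liminf_k$ and then $\varepsilon \to 0$ yields $u(\sigma\underline{y}) \leq u(\underline{y}) + \varphi(\underline{y}) - \alpha_\varphi$, i.e., the subaction property.

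For calibration, the $\min \geq u(\underline{y}) + \alpha_\varphi$ direction follows from the subaction inequality applied to each preimage. For the reverse, I will choose $\varepsilon_k \to 0$, $n_k \to \infty$, and $\underline{z}_k \in B(\underline{x}, \underline{y}, n_k; \varepsilon_k)$ whose Birkhoff sum is within $\varepsilon_k$ of $u(\underline{y})$, and form the preimages $\underline{z}^*_k := (\underline{z}_k)_{n_k - 1}\, y_0 y_1 y_2 \cdots$ of $\underline{y}$. By compactness of $[0,1]$, a subsequence $(\underline{z}_k)_{n_k - 1} \to c^*$, giving $\underline{z}^* := c^*\, y_0 y_1 \cdots$ with $\sigma\underline{z}^* = \underline{y}$. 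Since $d_X(\sigma^{n_k - 1}\underline{z}_k, \underline{z}^*_k) \leq \varepsilon_k / 2$, the truncated orbit of length $n_k - 1$ witnesses $H_\varphi(\underline{x}, \underline{z}^*; \delta)$ for any fixed $\delta > 0$ and large $k$. Splitting the Birkhoff sum at index $n_k - 1$ and using $\varphi(\sigma^{n_k - 1}\underline{z}_k) \to \varphi(\underline{z}^*)$ yields $\varphi(\underline{z}^*) + H_\varphi(\underline{x}, \underline{z}^*) \leq u(\underline{y}) + \alpha_\varphi$ in the limit, producing the required preimage.

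The main obstacle is the calibration step: the single-$k$ truncation must be converted into a genuine $\liminf_n$-witness for $H_\varphi(\underline{x}, \underline{z}^*; \delta)$ by passing to the subsequence $n = n_k - 1 \to \infty$, and the shrinking radii arising from $d_X(\sigma^{n_k - 1}\underline{z}_k, \underline{z}^*) \to 0$ must be traded against a fixed $\delta > 0$ before the final $\delta \to 0$ limit. Handling these nested limits, together with ensuring that the terms $\varphi(\sigma^{n_k - 1}\underline{z}_k)$ removed from the Birkhoff sum are controlled via the continuity of $\varphi$, is the technical crux of the argument.
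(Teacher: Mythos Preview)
The paper does not contain a proof of this theorem: it is stated as ``Part of Main Theorem~2 in \cite{KMS25}'' and cited without argument, so there is no in-paper proof to compare against.

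That said, your outline is sound and follows the standard route. The tail-replacement estimate for Lipschitzness is correct (your geometric bound $d_X(\sigma^i\underline{z},\sigma^i\underline{z}')\le 2^{-(n-i)}(\varepsilon+d_X(\underline{y},\underline{y}'))$ sums to at most $\varepsilon+d_X(\underline{y},\underline{y}')$, and you should invoke finiteness of $H_\varphi(\underline{x},\cdot)$ from Theorem~\ref{theorem:H_finite} together with the lower bound \eqref{eq:mane_calibrated} to know $u$ is real-valued before subtracting). The subaction inequality is immediate as you describe. For calibration, your extraction of $\underline{z}^*=c^*\,\underline{y}$ from the penultimate coordinates $(z_k)_{n_k-1}$ is exactly right; the only point requiring care is the one you flagged: the witnesses $\underline{z}_k$ live in $B(\underline{x},\underline{z}^*,n_k-1;\delta)$ only along the subsequence $n=n_k-1$, but since $H_\varphi(\underline{x},\underline{z}^*;\delta)$ is a $\liminf$ over all $n$, bounding it by the limit along one unbounded subsequence suffices. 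The diagonal choice of $(\varepsilon_k,n_k,\underline{z}_k)$ with Birkhoff sum within $\varepsilon_k$ of $u(\underline{y})$ is justified by the monotone convergence $H_\varphi(\underline{x},\underline{y};\varepsilon)\to u(\underline{y})$ as $\varepsilon\to0$ together with the definition of $\liminf_n$.
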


Calibrated subactions are powerful technical tools for ergodic optimization and closely related to the equivalence classes of the relation defined in Theorem~\ref{theorem:KMS25}, which we will see below in this subsection.
Before investigating their connection, we give the following formula.
\begin{theorem}
\label{Peierl's_barrier_inf}
    Every calibrated subaction $u$ of $\varphi$ satisfies 
    \begin{align}
        u(\underline{y})=\inf_{\underline{x}\in \Omega_\varphi}\left( H_\varphi(\underline{x},\underline{y})+u(\underline{x}) \right)
        \label{local_global}
    \end{align}
    for every $\underline{y}\in {X}$.
\end{theorem}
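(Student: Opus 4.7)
The plan is to prove the identity \eqref{local_global} by establishing two opposite inequalities. For ``$\leq$'', I would first extend the argument behind \eqref{eq:mane_calibrated} from $S_\varphi$ to $H_\varphi$. Iterating the subaction inequality along any orbit starting at $\underline{z}\in B(\underline{x},\underline{y},n;\varepsilon)$ gives
\[
\sum_{i=0}^{n-1}\bigl(\varphi(\sigma^i(\underline{z}))-\alpha_\varphi\bigr)\ \geq\ u(\sigma^n(\underline{z}))-u(\underline{z})\ \geq\ u(\underline{y})-u(\underline{x})-2\omega_u(\varepsilon),
\]
where $\omega_u$ is a modulus of continuity of $u$ on the compact space $X$. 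Taking the infimum over $\underline{z}$, then $\liminf$ over $n$, and finally $\varepsilon\to 0$ yields $H_\varphi(\underline{x},\underline{y})\geq u(\underline{y})-u(\underline{x})$; taking the infimum over $\underline{x}\in\Omega_\varphi$ gives the ``$\leq$'' direction of \eqref{local_global}.

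For ``$\geq$'', I would exploit the calibrated property to construct a backward orbit. Setting $\underline{y}^{(0)}:=\underline{y}$, I would inductively pick a minimizer $\underline{y}^{(n+1)}\in\sigma^{-1}(\underline{y}^{(n)})$ realising
\[
\varphi(\underline{y}^{(n+1)})+u(\underline{y}^{(n+1)})=u(\underline{y}^{(n)})+\alpha_\varphi;
\]
such a minimizer exists since each preimage set $\sigma^{-1}(\underline{y}^{(n)})$ is compact and $\varphi+u$ is continuous. A telescoping sum combined with $\sigma^N(\underline{y}^{(N)})=\underline{y}$ then yields the exact identity
\[
\sum_{k=0}^{N-1}\bigl(\varphi(\sigma^k(\underline{y}^{(N)}))-\alpha_\varphi\bigr)=u(\underline{y})-u(\underline{y}^{(N)}).
\]
By compactness of $X$, I extract a subsequence $N_k\to\infty$ with $\underline{y}^{(N_k)}\to\underline{x}^\ast$ for some $\underline{x}^\ast\in X$.

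The main obstacle is to verify that $\underline{x}^\ast\in\Omega_\varphi$, which I would handle by a pigeonhole / closing argument. Given $\varepsilon>0$, for all large $k<\ell$ one has $\underline{y}^{(N_k)},\underline{y}^{(N_\ell)}$ within $\varepsilon$ of $\underline{x}^\ast$. Applying the telescoping identity between indices $N_k$ and $N_\ell$ exhibits $\underline{y}^{(N_\ell)}$ as a point of $B(\underline{x}^\ast,\underline{x}^\ast,N_\ell-N_k;\varepsilon)$ whose Birkhoff sum equals $u(\underline{y}^{(N_k)})-u(\underline{y}^{(N_\ell)})$, and the latter tends to $0$ by continuity of $u$. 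Hence $S_\varphi(\underline{x}^\ast,\underline{x}^\ast;\varepsilon)\to 0$ as $\varepsilon\to 0$, and together with the lower bound $S_\varphi(\underline{x}^\ast,\underline{x}^\ast)\geq 0$ from the first step this forces $S_\varphi(\underline{x}^\ast,\underline{x}^\ast)=0$, i.e.\ $\underline{x}^\ast\in\Omega_\varphi$. Finally, specialising the telescoping identity to $N=N_k$ certifies $\underline{y}^{(N_k)}\in B(\underline{x}^\ast,\underline{y},N_k;\varepsilon)$ with Birkhoff sum $u(\underline{y})-u(\underline{y}^{(N_k)})$; infimizing, $\liminf$-ing in $n$, and sending $\varepsilon\to 0$ give $H_\varphi(\underline{x}^\ast,\underline{y})\leq u(\underline{y})-u(\underline{x}^\ast)$, which closes the argument.
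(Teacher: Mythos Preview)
Your proposal is correct and follows essentially the same route as the paper. The paper isolates your ``accumulation points of the calibrated backward orbit lie in $\Omega_\varphi$'' step as a separate lemma (Lemma~\ref{accumuration_points_calibrated}) and, for the ``$\leq$'' direction, quotes the known bound \eqref{eq:mane_calibrated} together with $H_\varphi\geq S_\varphi$ rather than re-deriving it, but the key ideas---backward orbit via the calibration equation, telescoping, subsequential limit, and the closing argument to land in the Aubry set---are identical.
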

The case where the base space is a symbolic dynamical system with a finite alphabet was treated in \cite[Theorem 4.7]{BLL13}, in which a similar formula was given for a Lipschitz function. 
In the case of the full shift with $[0,1]$, a similar formula was established in \cite[Proposition 8]{LMST} for Lipschitz potentials depending only on the first two coordinates and its $[0,1]$-backward calibrated subactions.

Theorem~\ref{Peierl's_barrier_inf} indicates that, even if we know the values of a calibrated subaction $u$ only on $\Omega_{\varphi}$, $u$ can be consistently extended from $\Omega_{\varphi}$ to the entire space $X$ via the Peierl’s barrier, which encodes the minimal cost to reach $\underline{y}$ from $\Omega_{\varphi}$. 
Thus, $u$ is not merely a tool for analyzing minimizing orbits, but also a means of transmitting the structure of the Aubry set throughout $X$. 
In this sense, the Aubry set serves as the ``core" of optimal dynamics, and the calibrated subaction $u$ encodes its influence globally.

In order to show Theorem~\ref{Peierl's_barrier_inf}, we shall give the following lemma.
\begin{lemma}\label{accumuration_points_calibrated}
    Let $u: X\rightarrow \mathbb{R}$ be a calibrated subaction of $\varphi$ and $\underline{y}^{(0)}\in {X}$.
    For each $n\geq 1$ let $\underline{y}^{(-n)}$ be a preimage of $\underline{y}^{(-n+1)}$ such that
    \begin{align*}
        u(\underline{y}^{(-n+1)})=\varphi(\underline{y}^{(-n)})+u(\underline{y}^{(-n)})-\alpha_\varphi.
    \end{align*}
    Then any accumulation point of $\{\underline{y}^{(-n)}\}$ belongs to $\Omega_\varphi$.
\end{lemma}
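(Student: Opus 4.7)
The plan is to exploit the equality that the backward orbit realizes at each step, telescope it, and then use the accumulation property together with continuity of $u$ to make the Ma\~n\'e potential $S_\varphi(\underline{z},\underline{z})$ at an accumulation point $\underline{z}$ arbitrarily small.

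First I would rewrite the defining equality as
\[
u(\underline{y}^{(-n+1)}) - u(\underline{y}^{(-n)}) = \varphi(\underline{y}^{(-n)}) - \alpha_\varphi, \qquad n \ge 1,
\]
and for any integers $0 \le p < q$, sum this identity for $n = p+1, p+2, \ldots, q$. Since $\sigma(\underline{y}^{(-n)}) = \underline{y}^{(-n+1)}$, after reindexing the right-hand side we obtain the telescoping formula
\[
u(\underline{y}^{(-p)}) - u(\underline{y}^{(-q)}) = \sum_{i=0}^{q-p-1} \left( \varphi\bigl(\sigma^i(\underline{y}^{(-q)})\bigr) - \alpha_\varphi \right).
\]
This is the workhorse identity.

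Next, fix an accumulation point $\underline{z}$ of $\{\underline{y}^{(-n)}\}$, and let $\{\underline{y}^{(-n_j)}\}_{j\ge 1}$ be a subsequence with $\underline{y}^{(-n_j)} \to \underline{z}$. Given $\varepsilon>0$, by continuity of $u$ and the convergence of the subsequence, I can pick $j<k$ so large that
\[
d_X(\underline{y}^{(-n_j)},\underline{z}) < \varepsilon, \qquad d_X(\underline{y}^{(-n_k)},\underline{z}) < \varepsilon, \qquad |u(\underline{y}^{(-n_j)}) - u(\underline{y}^{(-n_k)})| < \varepsilon.
\]
Setting $m := n_k - n_j \ge 1$ and $\underline{w} := \underline{y}^{(-n_k)}$, we then have $\sigma^m(\underline{w}) = \underline{y}^{(-n_j)}$, so $\underline{w} \in B(\underline{z},\underline{z},m;\varepsilon)$, and the telescoping identity applied with $p = n_j$, $q = n_k$ gives
\[
\sum_{i=0}^{m-1} \left( \varphi\bigl(\sigma^i(\underline{w})\bigr) - \alpha_\varphi \right) = u(\underline{y}^{(-n_j)}) - u(\underline{y}^{(-n_k)}) < \varepsilon.
\]
Taking the infimum in the definition of $S_\varphi(\underline{z},\underline{z};\varepsilon)$ over such competitors yields $S_\varphi(\underline{z},\underline{z};\varepsilon) < \varepsilon$, and letting $\varepsilon \to 0$ gives $S_\varphi(\underline{z},\underline{z}) \le 0$.

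Finally, the calibrated subaction $u$ is Lipschitz (calibrated subactions are Lipschitz subactions, as in Theorem~\ref{property_Peierl}), so inequality \eqref{eq:mane_calibrated} applied with $\underline{x}=\underline{y}=\underline{z}$ yields $S_\varphi(\underline{z},\underline{z}) \ge 0$. Combining the two bounds, $S_\varphi(\underline{z},\underline{z}) = 0$, which by Definition~\ref{aubry_set} means $\underline{z}\in \Omega_\varphi$. There is no real obstacle here: the only delicate point is to verify that the $\varepsilon$ controlling the distance of $\underline{w}$ and $\sigma^m(\underline{w})$ to $\underline{z}$ can be chosen simultaneously with the $\varepsilon$ controlling the $u$-difference, which is immediate from continuity of $u$.
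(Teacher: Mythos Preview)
Your argument is essentially identical to the paper's: both telescope the calibration identity along the backward orbit, use continuity of $u$ to make the Birkhoff sum between two nearby return times small, feed that into $S_\varphi(\underline{z},\underline{z};\varepsilon)$, and combine with $S_\varphi(\underline{z},\underline{z})\ge 0$. One small imprecision: Theorem~\ref{property_Peierl} does not assert that every calibrated subaction is Lipschitz, only that the Peierl's barrier provides one; but the lower bound $S_\varphi(\underline{z},\underline{z})\ge 0$ follows from \eqref{eq:mane_calibrated} applied to \emph{any} Lipschitz subaction (whose existence is guaranteed), so your conclusion stands with this minor rewording.
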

\begin{proof}
    Let us consider a convergent subsequence $\{\underline{y}^{(-n_k)}\}$ of $\{\underline{y}^{(-n)}\}$ such that $\displaystyle{\lim_{k\to\infty}\underline{y}^{(-n_k)}=\underline{x} \in X}$.
    By \eqref{eq:mane_calibrated}, we have
    \begin{align*}
        S_\varphi(\underline{x},\underline{x})\geq u(\underline{x})-u(\underline{x})=0.
    \end{align*}
    Let $\theta>0$. Take $\varepsilon>0$ satisfying
    \begin{align*}
        S_\varphi(\underline{x},\underline{x})<\widehat{S}_\varphi(\underline{x},\underline{x};\varepsilon)+\theta.
    \end{align*}
    From the uniform continuity of $u$, we can take $\delta \in (0,\varepsilon)$ such that
    \begin{align}
    \label{eq:u_condition}
    \text{
    $|u(\underline{z})-u(\underline{z}')|<\theta$ if $d(\underline{z},\underline{z}')<\delta$.
    }
    \end{align}
    We can then choose $k\geq 1$ large enough to ensure
    \[
    d(\underline{y}^{(-n_k)},\underline{x})< \frac{\delta}{2}, \ \text{and} \ d(\underline{y}^{(-n_{k+1})},\underline{x})< \frac{\delta}{2}.
    \]
    This implies
    \begin{align}
    \label{eq:y_delta_condition}
    d(\underline{y}^{(-n_k)},\underline{y}^{(-n_{k+1})})<\delta
    \end{align}
    Moreover, we obtain
    \[
        \sigma^{n_{k+1}-n_k}(\underline{y}^{(-n_{k+1})})=\underline{y}^{(-n_k)}
    \]
    since $\underline{y}^{(-n)}$ is a preimage of $\underline{y}^{(-n+1)}$ for all $n\ge 1$.
    From the definition of $\underline{y}^{(-n)}$, $\eqref{eq:u_condition}$, and $\eqref{eq:y_delta_condition}$, it follows that
    \begin{align*}
        \sum_{i=0}^{n_{k+1}-n_k-1}(\varphi\circ\sigma^i(\underline{y}^{(-n_{k+1})})-\alpha_{\varphi})
        =u(\underline{y}^{(-n_{k})})-u(\underline{y}^{(-n_{k+1})})
        <\theta.
    \end{align*}
    Hence we have
    \begin{align*}
        S_\varphi(\underline{x},\underline{x})<2\theta.
    \end{align*}
    Since $\theta>0$ is arbitrary, we have $S_\varphi(\underline{x},\underline{x})\leq 0$, which completes the proof.
\end{proof}

\begin{proof}[Proof of Theorem \ref{Peierl's_barrier_inf}]
    Take $\underline{y}\in {X}$ and a sequence $\{\underline{y}^{(-n)}\}$ as in Lemma \ref{accumuration_points_calibrated} with $\underline{y}^{(0)}:=\underline{y}$ such that the subsequence $\{\underline{y}^{(n_k)}\}$ converges to some $\underline{x}\in\Omega_\varphi$.
    By the definition of $\underline{y}^{(-n)}$, we have
    \[
    \sum_{i=0}^{n_{k}-1}\left(\varphi\circ\sigma^i(\underline{y}^{(-n_{k})})-\alpha_{\varphi} \right)
    =u(\underline{y})-u(\underline{y}^{(-n_k)})
    \]
    for all $k\geq 1$.
   Let $\theta>0$. Take $\varepsilon>0$ small enough such that
   \begin{align*}
       H_\varphi(\underline{x},\underline{y})<\widehat{H}_\varphi(\underline{x},\underline{y}; \varepsilon)+\theta.
   \end{align*}
   Take $\delta \in (0,\varepsilon)$ such that $|u(\underline{z})-u(\underline{z}')|<\theta$ if $d(\underline{z},\underline{z}')<\delta$.
   We can then choose $k\geq 1$ large enough to satisfy $d(\underline{y}^{(-n_k)}, \underline{x})<\delta$ and 
   \begin{align*}
       \widehat{H}_\varphi(\underline{x},\underline{y};\varepsilon)
       \leq \inf_{n\geq n_k}
       \left\{\sum_{i=0}^{n-1}(\varphi\circ\sigma^i(\underline{z})-\alpha_{\varphi}): \underline{z}\in B(\underline{x}, \underline{y}, n;\varepsilon) \right\}+\theta.
   \end{align*}
   Since $\underline{y}^{(-n_k)}\in B(\underline{x},\underline{y}, n_k; \varepsilon)$, we have
   \begin{align*}
       H_\varphi(\underline{x}, \underline{y})
       &\leq \sum_{i=0}^{n_{k}-1}(\varphi\circ\sigma^i(\underline{y}^{-n_{k}})-\alpha_{\varphi})+2\theta\\
       &=u(\underline{y})-u(\underline{y}^{(-n_k)})+2\theta\\
       &\leq u(\underline{y})-u(\underline{x})+3\theta.
   \end{align*}
   Since $\theta>0$ is arbitrary and $\underline{x}\in \Omega_\varphi$, we have
   \begin{align*}
       u(\underline{y})\geq \inf_{\underline{x}\in \Omega_\varphi}(H_\varphi(\underline{x}, \underline{y})+u(\underline{x})).
   \end{align*}
   On the other hand, combining the formula \eqref{eq:mane_calibrated} with the definitions of $H_{\varphi}$ and $S_{\varphi}$, we obtain
    \begin{align*}
        \inf_{\underline{x}\in \Omega_\varphi}(H_\varphi(\underline{x}, \underline{y})+u(\underline{x}))
        &\ge\inf_{\underline{x}\in \Omega_\varphi}(S_\varphi(\underline{x}, \underline{y})+u(\underline{x}))\\
        &\ge \inf_{\underline{x}\in \Omega_\varphi} (u(\underline{y})-u(\underline{x}) + u (\underline{x}))\\
        &\ge u(\underline{y}),
    \end{align*}
    which completes the proof.
\end{proof}

Furthermore, as an analogy of Theorem 4.8 in \cite{BLL13}, we observe that a calibrated subaction $u$ remains consistent on each equivalence class (often referred to as a connected component) within the Aubry set. 
Specifically, points in the same class differ only by a constant under $u$.

\begin{theorem}
\label{thm:relation_H_u}
    Let $u$ be a calibrated subaction of $\varphi$.
    If $\underline{x},\underline{z}\in \Omega_\varphi$ satisfy the relation $\underline{x}\sim_\varphi \underline{z}$ (given in Theorem \ref{theorem:KMS25}(ii)), then we have
    \begin{align*}
        H_\varphi(\underline{x}, \underline{y})+u(\underline{x})=H_\varphi(\underline{z},\underline{y})+u(\underline{z})
    \end{align*}
    for every $\underline{y}\in X$.
\end{theorem}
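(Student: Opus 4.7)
The plan is to reduce the claimed identity to two ingredients: a triangle-type inequality for the Peierl's barrier with $\underline{z}\in\Omega_\varphi$ as an intermediate point, and the observation that the standard subaction bound collapses to an equality between two $\sim_\varphi$-equivalent points. The two ingredients are then combined by a routine rearrangement.

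For the triangle inequality, I would exploit Theorem \ref{property_Peierl}, which asserts that $\underline{y}\mapsto H_\varphi(\underline{x},\underline{y})$ is itself a Lipschitz calibrated subaction, and then apply Theorem \ref{Peierl's_barrier_inf} to that calibrated subaction to obtain
\[
H_\varphi(\underline{x},\underline{y}) \;=\; \inf_{\underline{w}\in\Omega_\varphi}\bigl(H_\varphi(\underline{w},\underline{y}) + H_\varphi(\underline{x},\underline{w})\bigr).
\]
Specializing to $\underline{w}=\underline{z}$, and analogously, after swapping the roles of $\underline{x}$ and $\underline{z}$, to $\underline{w}=\underline{x}$, yields
\[
H_\varphi(\underline{x},\underline{y}) - H_\varphi(\underline{z},\underline{y}) \leq H_\varphi(\underline{x},\underline{z}), \qquad H_\varphi(\underline{z},\underline{y}) - H_\varphi(\underline{x},\underline{y}) \leq H_\varphi(\underline{z},\underline{x}).
\]
Adding these two inequalities and invoking $\underline{x}\sim_\varphi\underline{z}$, i.e.\ $H_\varphi(\underline{x},\underline{z}) + H_\varphi(\underline{z},\underline{x}) = 0$, sandwiches them into equalities; in particular,
\[
H_\varphi(\underline{x},\underline{y}) - H_\varphi(\underline{z},\underline{y}) \;=\; H_\varphi(\underline{x},\underline{z}).
\]

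For the second ingredient, I would apply the universal subaction bound $H_\varphi(\underline{a},\underline{b}) \geq S_\varphi(\underline{a},\underline{b}) \geq u(\underline{b})-u(\underline{a})$ (which follows from $H_\varphi \geq S_\varphi$ combined with \eqref{eq:mane_calibrated}) to both $(\underline{x},\underline{z})$ and $(\underline{z},\underline{x})$; summing and using $H_\varphi(\underline{x},\underline{z})+H_\varphi(\underline{z},\underline{x})=0$ once more forces both inequalities to be equalities, whence $H_\varphi(\underline{x},\underline{z}) = u(\underline{z}) - u(\underline{x})$. Substituting this into the conclusion of the previous step gives the claimed identity. I expect the first ingredient to be the main obstacle: the triangle-type inequality for $H_\varphi$ with an intermediate point in $\Omega_\varphi$ is not built into the definition of Peierl's barrier, and a direct concatenation-of-orbits proof would need care with the $\liminf_{n\to\infty}$ that defines $H_\varphi$; the key insight is that one can bypass this entirely by recognizing the inequality as a specialization of Theorem \ref{Peierl's_barrier_inf} applied to the particular calibrated subaction $H_\varphi(\underline{x},\cdot)$, after which the equivalence relation $\sim_\varphi$ makes all the remaining inequalities collapse to equalities.
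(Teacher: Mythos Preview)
Your proof is correct and uses essentially the same two ingredients as the paper: the triangle inequality for the Peierl's barrier and the subaction lower bound $H_\varphi(\underline{a},\underline{b})\ge u(\underline{b})-u(\underline{a})$, with the equivalence $\underline{x}\sim_\varphi\underline{z}$ collapsing all inequalities to equalities. The only noteworthy difference is how the triangle inequality is obtained. You derive the special case with intermediate point in $\Omega_\varphi$ by applying Theorem~\ref{Peierl's_barrier_inf} to the calibrated subaction $H_\varphi(\underline{x},\cdot)$ from Theorem~\ref{property_Peierl}; this is valid but unnecessary, since the paper simply cites the full triangle inequality $H_\varphi(\underline{x},\underline{y})\le H_\varphi(\underline{x},\underline{z})+H_\varphi(\underline{z},\underline{y})$ for arbitrary $\underline{x},\underline{y},\underline{z}\in X$ as Lemma~2.14 of \cite{KMS25} (see \eqref{eq:triangle}). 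With that lemma in hand, the paper's argument is a single chain of inequalities rather than your two-step decomposition, but the logical content is the same.
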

\begin{proof}
    From the triangle inequality of the Peierl's barrier (Lemma 2.14 in \cite{KMS25}), we have
    \begin{align}
    H_\varphi(\underline{x},\underline{y})\leq H_\varphi(\underline{x},\underline{z})+H_\varphi(\underline{z},\underline{y})
    \label{eq:triangle}
    \end{align}
    for every $\underline{x},\underline{y},\underline{z}\in X$.
    Using \eqref{eq:triangle} and Theorem \ref{Peierl's_barrier_inf}, we obtain
    \begin{align*}
        H_\varphi(\underline{x},\underline{y})+u(\underline{x})
        &\leq H_\varphi(\underline{x},\underline{z})+H_\varphi(\underline{z}, \underline{y})+u(\underline{x})
        \\
        &\leq H_\varphi(\underline{x},\underline{z})+H_\varphi(\underline{z}, \underline{y})+u(\underline{x})+
        H_\varphi(\underline{z},\underline{x})+u(\underline{z})-u(\underline{x})
        \\
        &\leq H_\varphi(\underline{z},\underline{y})+u(\underline{z}). 
    \end{align*}
    Here we use $\underline{x}\sim_\varphi \underline{z}$ in the last inequality. Exchanging the roles of $\underline{x}$ and $\underline{z}$, we get the reverse inequality.
\end{proof}

In light of Theorem~\ref{thm:relation_H_u}, we see that Theorem \ref{Peierl's_barrier_inf} can be reformulated as
\begin{align}\label{eqn:reformulation_calibrated_subaction}
    u(\underline{y})=\inf_{\underline{x} \in \Omega_\varphi/\sim_\varphi} G([\underline{x}],\underline{y}),
\end{align}
where $[\underline{x}] = \{\underline{y} \in \Omega_{\varphi} \mid \underline{x} \sim_\varphi \underline{y}\}$
and $G([\underline{x}],\underline{y})=H_\varphi(\underline{z},\underline{y})+u(\underline{z})$ for some $\underline{z}\in [\underline{x}]$.
Thus, the ``core'' of optimization is not merely $\Omega_\varphi$ as a whole, but rather its equivalence classes, which serve as a more refined structural unit.
This leads to the study of the quotient Aubry set $\bar{\Omega}_\varphi=\Omega_\varphi/\sim_\varphi$
and, in Section~\ref{sec:equivalence-Aubry}, we will extremely investigate it for the case that $\varphi$ is a Lipschitz potentials depending only on the first two coordinates.
Note that, from the above reformulation, we obtain the uniqueness of calibrated subactions in Appendix~\ref{sec:appendix}, which is out of our main interests in the present paper but an important remark.

\section{Explicit formulae of action minimizing sets}
\label{sec:explicit-form}

This section is devoted to the proofs of Main Theorem \ref{thm:aubry} and Main Theorem \ref{thm:semi-static}.
Although these theorems assume that $h\in\mathscr{H}$, we actually consider a more general setting of Lipschitz potentials depending only on the first two coordinates including indifferentiable ones.

Set 
\begin{align}
    \mathcal{H}=\{ h \in \mathrm{Lip}([0,1]^2,\R) \mid \text{$h$ satisfies $(H_3)$ and $(H_4)$} \},
\end{align}
where $(H_3)$ and $(H_4)$ are given by
\begin{itemize}
    \item[$(H_3)$]  If $\xi_1 < \xi_2$ and $\eta_1 < \eta_2$,
        then \[
        h(\xi_1,\eta_1) + h(\xi_2,\eta_2) < h(\xi_1,\eta_2) + h(\xi_2,\eta_1).
        \]
    \item[$(H_4)$]
    If both $(x_{-1},x_0,x_{1})$ and $(x^{'}_{-1},x_0,x^{'}_{1})$
    with $(x_{-1},x_0,x_{1}) \neq (x^{'}_{-1},x_0,x^{'}_{1})$ are {\it{minimal}}, then \[(x_{-1}-x^{'}_{-1})(x_{1}-x^{'}_{1})<0.\]
\end{itemize}
Here, the definition of the terminology {\it minimal} is the following:
\begin{definition}[Minimal]
Fix $k,l \in \N_{0}$ with $k < l$ arbitrarily.
A finite sequence $\{x_i\}_{i=k}^{l}$ is said to be {\it minimal} if, for any $\{y_i\}_{i=k}^{l}$ with $y_k=x_k$ and $y_l = x_l$, we have:
\[
\sum_{i=k}^{l-1} h(x_i,x_{i+1}) \le \sum_{i=k}^{l-1} h(y_i,y_{i+1}).
\]
Moreover, an infinite sequence $\{x_i\}_{i \in \N_{0}}$ is said to be minimal if $\{x_i\}_{i=k}^{l}$ is minimal for any $k,l$ with $k < l$.
\end{definition}
The labels $(H_3)$ and $(H_4)$ come from \cite{Ban88}.
Hereafter, we always assume that
\[
\text{$\varphi(\underline{x})=h(x_0,x_1)$ for some $h \in \mathcal{H}$.}
\]

\begin{remark}
\label{rem:h}
\,
\begin{enumerate}[(i)]
    \item Note that $\mathscr{H} \subset \mathcal{H}$
    since the twist condition $D_1D_2h<0$ implies $(H_3)$ and $(H_4)$ (see Remark 4.4 in \cite{KMS25}).
    For the proofs of Main Theorem \ref{thm:aubry} and Main Theorem \ref{thm:semi-static} only $(H_3)$ and $(H_4)$ are required.
    Therefore, we can replace $\mathscr{H}$ with $\mathcal{H}$ in these theorems.
    \item As stated in Theorem~\ref{thm:criterion}(i), the optimal ergodic average $\alpha_\varphi$ with 
    $\varphi(\underline{x})=h(x_0,x_1)$ equals to $h^*=\min_{a\in[0,1]} h(a,a)$. Throughout Sections \ref{sec:explicit-form} and \ref{sec:equivalence-Aubry}, we frequently use this fact.
\end{enumerate}
\end{remark}
By Remark~\ref{rem:h}(ii) and our setting, we immediately get
\[
\sum_{i=0}^{n-1} \left(\varphi\circ\sigma^i(\underline{x})-\alpha_{\varphi} \right) = \sum_{i=0}^{n-1} (h(x_i,x_{i+1}) - h^\ast).
\]

\begin{lemma}
\label{lemm:period_aubry}
    If $\underline{x} \in \Omega_{\varphi}$ is periodic,
    then we have $\underline{x}= a^\infty$ for some $a \in \mathrm{m}_h$.
\end{lemma}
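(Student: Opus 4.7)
The plan is to reduce the claim to the characterization of periodic minimizing measures already available in Theorem~\ref{thm:criterion}(ii), which identifies $\mathcal{M}_{{\rm min}}(\varphi) \cap \mathcal{M}^{\mathrm{p}}$ with the Dirac masses $\{\delta_{a^\infty} \mid a \in \mathrm{m}_h\}$. It therefore suffices to show that any periodic $\underline{x} \in \Omega_\varphi$ produces an element of $\mathcal{M}_{{\rm min}}(\varphi) \cap \mathcal{M}^{\mathrm{p}}$ concentrated on a single point, which forces the orbit to collapse to a fixed point.

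Concretely, let $\underline{x} \in \Omega_\varphi$ be periodic of minimal period $p \ge 1$, and form the empirical measure $\mu_{\underline{x}} := \frac{1}{p}\sum_{i=0}^{p-1} \delta_{\sigma^i(\underline{x})}$. It is manifestly $\sigma$-invariant and lies in $\mathcal{M}^{\mathrm{p}}$. To check that $\mu_{\underline{x}}$ is minimizing, I would invoke Proposition~\ref{bounded}: the sequence $\bigl\{\sum_{i=0}^{n-1}(\varphi \circ \sigma^i(\underline{x}) - \alpha_\varphi)\bigr\}_{n \in \mathbb{N}_0}$ is bounded, and specializing to $n = kp$ gives $k \cdot A$ with $A := \sum_{i=0}^{p-1}(\varphi \circ \sigma^i(\underline{x}) - \alpha_\varphi)$. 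Boundedness as $k \to \infty$ forces $A = 0$, so $\int \varphi \, d\mu_{\underline{x}} = \alpha_\varphi$. An equivalent route is to use that $\underline{x} \in \Omega_\varphi = A_\varphi$ by Proposition~\ref{prop:action_minimizing_aubry}, so the orbit is $\varphi$-static, and then evaluate the staticity identity at $(i,j) = (0,p)$ together with $S_\varphi(\underline{x},\underline{x}) = 0$.

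Once $\mu_{\underline{x}} \in \mathcal{M}_{{\rm min}}(\varphi) \cap \mathcal{M}^{\mathrm{p}}$ is established, Theorem~\ref{thm:criterion}(ii) yields $\mu_{\underline{x}} = \delta_{a^\infty}$ for some $a \in \mathrm{m}_h$. Since the support of $\mu_{\underline{x}}$ is the orbit $\{\sigma^i(\underline{x}) : 0 \le i < p\}$ while the support of $\delta_{a^\infty}$ is the single point $\{a^\infty\}$, the orbit must consist of that one point, giving $\underline{x} = a^\infty$ with $a \in \mathrm{m}_h$.

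The only subtle point is that Theorem~\ref{thm:criterion}(ii) is originally stated for $h \in \mathscr{H}$, whereas we are working with $h \in \mathcal{H}$; by Remark~\ref{rem:h}(i), however, only $(H_3)$ and $(H_4)$ are needed in its proof, so the invocation is legitimate. If one wanted to avoid citing Theorem~\ref{thm:criterion}(ii) altogether, the alternative would be to exploit minimality of the periodic orbit (forced by $\varphi$-staticity together with Theorem~\ref{thm:criterion}(iii), which places every $x_i$ in $\mathrm{m}_h$) and compare the triples $(x_{i-1},x_i,x_{i+1})$ with the constant triples $(x_i,x_i,x_i)$ via the twist-type condition $(H_4)$; I expect this to be the more delicate route, which is why the empirical-measure reduction is preferable.
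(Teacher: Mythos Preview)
Your argument is correct and takes a genuinely different route from the paper. The paper also starts from Proposition~\ref{bounded} together with the inclusion $\Omega_\varphi\subset\mathrm{m}_h^{\N_0}$ from Theorem~\ref{thm:criterion}(iii), but then argues \emph{directly} that a periodic point $\underline{y}\in(\mathrm{m}_h)^{\N_0}$ of minimal period $l\ge 2$ satisfies $\sum_{i=0}^{l-1}(h(y_i,y_{i+1})-h^\ast)>0$: an induction on $l$ using $(H_3)$ alone reduces the $l$-periodic loop to an $(l-1)$-periodic one with a strictly smaller sum, and the base case $l=2$ is immediate from $(H_3)$. This positive period-sum then makes the partial sums diverge, contradicting boundedness. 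Your approach instead packages the same boundedness observation into the statement that the empirical measure of $\underline{x}$ is minimizing, and then appeals to Theorem~\ref{thm:criterion}(ii) as a black box to pin it down as a Dirac mass at a fixed point. Your route is shorter and conceptually clean, but it shifts the real work into the cited classification of periodic minimizing measures; the paper's route is more self-contained at this point, isolating precisely where $(H_3)$ enters. Your remark about $\mathscr{H}$ versus $\mathcal{H}$ applies equally to both proofs, since the paper's own argument likewise invokes Theorem~\ref{thm:criterion} in the $\mathcal{H}$ setting.
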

\begin{proof}
    From Theorem~\ref{thm:criterion} and Proposition~\ref{bounded},
    it suffices to show that
    \[
    \lim_{n \to \infty} \sum_{i=0}^{n-1}\left(\varphi\circ\sigma^i(\underline{x})-\alpha_{\varphi} \right)= \infty
    \]
    if $\underline{x}\in (\mathrm{m}_h)^{\mathbb{N}_0}$ and is periodic with a minimal period $k\geq 2$.

    We first show that, by induction, 
    \begin{align}
    \label{eq:per_indction}
    \sum_{i=0}^{l-1}\left(\varphi\circ\sigma^i(\underline{y})-\alpha_{\varphi} \right)=
    \sum_{i=0}^{l-1} (h(y_i,y_{i+1})-h^*)>0
    \end{align}
    for any periodic point $\underline{y}\in(\mathrm{m}_h)^{\mathbb{N}_0}$ with the minimal period $l$, where $l\geq 2$.
    Set $\underline{y}=(a_{k_1}a_{k_2})^\infty$ with $a_{k_1} \neq a_{k_2}$. By $(H_3)$,
    \[
    h(a_{k_1},a_{k_2}) + h(a_{k_2},a_{k_1}) > h(a_{k_1},a_{k_1}) + h(a_{k_2},a_{k_2}) = 2 \alpha_{\varphi},
    \]
    which implies that $\eqref{eq:per_indction}$ holds for $l=2$.
    We next assume that
    \[
    \sum_{i=0}^{l-2} (h(y_i,y_{i+1})-h^*)>0
    \]
    for any periodic point $\underline{y}\in (\mathrm{m}_h)^{\mathbb{N}_0}$ with the minimal period $l-1$.

    Let $\underline{y}=y_0y_1\ldots\in (\mathrm{m}_h)^{\mathbb{N}_0}$ be a periodic point with the minimal period $l\ge 2$.
    Then there exists $l'\in\{1,2,\cdots,l\}$
    such that $y_{l'}<y_{l'-1}$ and $y_{l'}\le y_{l'+1}$ since if not we have $y_i\ge y_{i-1}$ or $y_i> y_{i+1}$ for all $i\in\{1,\cdots, l\}$ and this contradicts to the fact that 
    $\underline{y}$ is periodic with a minimal period $l\ge 2$.
    Now we consider the case $\underline{y}=(a_1a_2\cdots a_l)^\infty\in (\mathrm{m}_h)^{\mathbb{N}_0}$ with $a_1<a_2$
    and $a_1\le a_l$.
     Since $a_{1} < a_{2}$ and $a_{1}\le a_{l}$, $(H_3)$ and the assumption for the induction imply
    \begin{align*}
    \label{eq:cn>cn-1}
    &h(a_{1},a_{2}) + h(a_{2},a_{3}) + \cdots + h(a_{l},a_{1})\\
    & \ge h(a_{1},a_{1}) + \underbrace{h(a_{l},a_{2})  + h(a_{2},a_{3}) + \cdots + h(a_{l-1},a_{l})}_{(l-1)-periodic}\\
    &> \alpha_{\varphi}+ (l-1)  \alpha_{\varphi} =l \alpha_{\varphi}.
    \end{align*}
    Note that the equality of the first inequality holds if and only if $a_1=a_l$.
    The other cases can be shown in a similar way, which completes the proof of \eqref{eq:per_indction} for any periodic point $\underline{y}\in (\mathrm{m}_h)^{\mathbb{N}_0}$ with a minimal period $l\ge 2$.

     Now, assume that $\underline{x}\in (\mathrm{m}_h)^{\mathbb{N}_0}$ is a periodic point with a minimal period $k$, where $k\ge 2$. Then we have
     \[
     c_k:=\sum_{i=0}^{k-1}(\varphi\circ\sigma^i(\underline{x})-\alpha_{\varphi} )=
     \sum_{i=0}^{k-1}(\varphi\circ\sigma^{jk+i}(\underline{x})-\alpha_{\varphi} )
     >0
     \]
     for any $j \in \N$.
     Thus, we compute
     \begin{align*}
     &\sum_{i=0}^{mk+r-1}(\varphi\circ\sigma^i(\underline{x})-\alpha_{\varphi} )\\
     &=
     \sum_{j=0}^{m-1} \sum_{i=0}^{k-1}(\varphi\circ \sigma^{jk+i}(\underline{x})-\alpha_{\varphi} )
     +
     \sum_{i=0}^{r-1}(\varphi\circ\sigma^{mk+i}(\underline{x})-\alpha_{\varphi} )\\
     &= c_k m+\sum_{i=0}^{r-1}(\varphi\circ\sigma^i(\underline{x})-\alpha_{\varphi} )
     \end{align*}
     for all $r=0,1,\ldots, k-1$.
     By taking $m \to \infty$, we obtain the desired result.
\end{proof}

\begin{lemma}[\cite{Ban88}, Lemma 4.5 \cite{KMS25}]
    \label{lemma:Ban33}
    Fix $n \in \N$.
    Then
    \[
    \sum_{i=0}^{n-1}(h(x_i,x_{i+1})-h^\ast) \ge 0
    \]
    for any $\underline{x} \in X(n)$, where
    \begin{align}
    \label{eq:xn-periodic}
    X(n) = \{ \underline{x} \in X \mid x_i=x_{n+i} \ \text{for all} \ i \in \N_{0} \}.
    \end{align}
    Moreover, the equality is true if and only if there exists $a \in \mathrm{m}$ satisfying ${x}_i = a$ for $i=0,\cdots,n-1$.
\end{lemma}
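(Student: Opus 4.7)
The plan is to reinterpret the cyclic Birkhoff sum as the integral of $\varphi$ against the natural invariant measure on the periodic orbit of $\underline{x}$, and then to invoke Theorem~\ref{thm:criterion}. Concretely, for $\underline{x}\in X(n)$ I would introduce the empirical probability measure
\[
\mu_{\underline{x}}:=\frac{1}{n}\sum_{i=0}^{n-1}\delta_{\sigma^i(\underline{x})}.
\]
Because $\sigma^n(\underline{x})=\underline{x}$, this is $\sigma$-invariant and supported on a single periodic orbit, so $\mu_{\underline{x}}\in\mathcal{M}^{\mathrm{p}}$ regardless of whether the minimal period of $\underline{x}$ equals $n$ or properly divides it. A direct expansion yields $\int\varphi\, d\mu_{\underline{x}}=\frac{1}{n}\sum_{i=0}^{n-1}h(x_i,x_{i+1})$.

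The claimed inequality would then follow at once from the variational bound $\int\varphi\, d\mu_{\underline{x}}\ge \alpha_\varphi$ combined with the identification $\alpha_\varphi=h^\ast$ supplied by Theorem~\ref{thm:criterion}(i) (and reproduced in Remark~\ref{rem:h}(ii)). For the equality case, I would argue that $\sum_{i=0}^{n-1}(h(x_i,x_{i+1})-h^\ast)=0$ forces $\mu_{\underline{x}}\in\mathcal{M}_{\min}(\varphi)\cap\mathcal{M}^{\mathrm{p}}$, and then appeal to Theorem~\ref{thm:criterion}(ii), which identifies this intersection as $\{\delta_{a^\infty}\mid a\in \mathrm{m}_h\}$. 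Since $\underline{x}$ lies in the support of $\mu_{\underline{x}}$, this collapses the orbit of $\underline{x}$ to $\{a^\infty\}$ for some $a\in\mathrm{m}_h$, so in particular $x_i=a$ for $i=0,\ldots,n-1$. The reverse implication is a one-line telescoping check using $h(a,a)=h^\ast$.

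Because the decisive structural inputs --- the value of $\alpha_\varphi$ and the rigidity of periodic minimizing measures --- are already delivered by Theorem~\ref{thm:criterion}, I do not foresee a serious obstacle. The only subtlety worth flagging is verifying that $\mu_{\underline{x}}$ genuinely lies in $\mathcal{M}^{\mathrm{p}}$ when the minimal period of $\underline{x}$ strictly divides $n$; this is routine because $\mu_{\underline{x}}$ then reduces to the uniform probability measure on the (shorter) primitive periodic orbit, which is still a single orbit.
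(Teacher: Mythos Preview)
The paper does not prove this lemma; it is quoted from \cite{Ban88} and \cite[Lemma~4.5]{KMS25} without argument, so there is no in-paper proof to compare against directly. Your derivation is formally valid as written, and within the logical structure of \emph{this} paper---where both Theorem~\ref{thm:criterion} and Lemma~\ref{lemma:Ban33} are imported as black boxes from \cite{KMS25}---nothing breaks.

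The issue is circularity at the level of the source. In \cite{KMS25}, Lemma~4.5 (the present lemma) is one of the variational ingredients used to \emph{establish} Main Theorem~3 (the present Theorem~\ref{thm:criterion}). In particular, the nontrivial direction of part~(i), namely $\alpha_\varphi\ge h^\ast$, is obtained by showing that every periodic orbit has Birkhoff average at least $h^\ast$---which is precisely the inequality you are trying to prove---and the rigidity statement in part~(ii) is essentially a reformulation of the equality case. So if your goal is to supply an independent proof of Lemma~\ref{lemma:Ban33}, invoking Theorem~\ref{thm:criterion} amounts to assuming the conclusion.

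The intended argument in \cite{Ban88} and \cite{KMS25} proceeds directly from $(H_3)$ and $(H_4)$, using the Aubry crossing inequality $(H_3)$ to rearrange a periodic configuration into shorter periodic pieces while strictly decreasing the sum. The induction on period carried out in the proof of Lemma~\ref{lemm:period_aubry} in this very paper is exactly that style of argument and would serve as a template for a self-contained proof.
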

\begin{remark}\label{rmk:periodic}
 We can replace $X(n)$ with $Y(n):=\{\underline{x}\in X\mid x_0=x_n\}$ since $\sum_{i=0}^{n-1}(h(x_i,x_{i+1})-h^\ast)$ refers to only the first $n$ coordinates of $\underline{x}\in X$.
\end{remark}

{We now give a key estimate for ``connecting orbit" with two distinct initial/terminal words. It is worth to remark that the following lemma relies on the assumption $(H_4)$.}
\begin{lemma}[{Key estimate}]
\label{lemm:c-min}
If $a,b \in \mathrm{m}_h$ and $a \neq b$,
    \begin{align}
    \label{c-min}
    \begin{split}
    \tilde{c} := h(a,b)-h^\ast - H_{\varphi}(a^\infty,b^\infty)>0.
    \end{split}
    \end{align}
\end{lemma}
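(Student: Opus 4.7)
The plan is to exhibit an explicit admissible pseudo-orbit $\underline{z}$ from a neighborhood of $a^\infty$ to a neighborhood of $b^\infty$ whose normalized action is strictly less than $h(a,b)-h^\ast$. Since $H_\varphi(a^\infty,b^\infty)$ is the infimum of such normalized actions, this will directly give $\tilde{c}>0$. The orbit will be the obvious one--a long stretch of $a$'s, a single ``detour'' coordinate $y_1$, then $b^\infty$--and the strict gain will come from a combined application of $(H_3)$ and $(H_4)$.

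The key step is to use $(H_4)$ to locate such a $y_1$. Consider the two length-three sequences $(a,a,a)$ and $(a,a,b)$, which share the middle coordinate $a$ but differ (since $a\neq b$) only in $x_1$. First verify that $(a,a,a)$ is minimal: for any $y\in[0,1]\setminus\{a\}$, say $y>a$, apply $(H_3)$ with $\xi_1=a,\xi_2=y,\eta_1=a,\eta_2=y$ to obtain $h(a,a)+h(y,y)<h(a,y)+h(y,a)$; combined with $h(y,y)\ge h^\ast=h(a,a)$, this gives $2h^\ast\le h(a,y)+h(y,a)$, and the case $y<a$ is symmetric. Now suppose $(a,a,b)$ were also minimal. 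Both triples have the same middle $a$ and the same $x_{-1}=a$, so $(H_4)$ would force $(a-a)(a-b)<0$, i.e.\ $0<0$, a contradiction. Hence $(a,a,b)$ is not minimal, which by definition produces some $y_1\in[0,1]$ (necessarily $y_1\neq a$) with
\[
h(a,y_1)+h(y_1,b)<h(a,a)+h(a,b)=h^\ast+h(a,b).
\]

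With $y_1$ in hand, fix $\varepsilon>0$ and choose $N\in\N$ with $2^{-(N-1)}<\varepsilon$. For each $n\ge N+1$ define $\underline{z}^{(n)}\in X$ by $z^{(n)}_i=a$ for $0\le i\le n-2$, $z^{(n)}_{n-1}=y_1$, and $z^{(n)}_i=b$ for $i\ge n$. Then $d_X(\underline{z}^{(n)},a^\infty)\le 2^{-(N-1)}<\varepsilon$ because the first $n-1\ge N$ coordinates agree with $a$, and $\sigma^n\underline{z}^{(n)}=b^\infty$, so $\underline{z}^{(n)}\in B(a^\infty,b^\infty,n;\varepsilon)$. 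Using $h(a,a)=h(b,b)=h^\ast$, the telescoping action is
\[
\sum_{i=0}^{n-1}\!\bigl(h(z^{(n)}_i,z^{(n)}_{i+1})-h^\ast\bigr)=\bigl(h(a,y_1)-h^\ast\bigr)+\bigl(h(y_1,b)-h^\ast\bigr),
\]
independent of $n$. Hence
\[
H_\varphi(a^\infty,b^\infty;\varepsilon)\le h(a,y_1)+h(y_1,b)-2h^\ast<h(a,b)-h^\ast
\]
for every $\varepsilon>0$, and letting $\varepsilon\to 0$ yields $H_\varphi(a^\infty,b^\infty)<h(a,b)-h^\ast$, i.e.\ $\tilde c>0$.

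The only delicate point is the application of $(H_4)$: one must realize that $(H_4)$ supplies strict ``monotone dependence'' of $x_1$ on $x_{-1}$ among minimal triples sharing a middle coordinate, so that two distinct minimal triples with the same $x_{-1}$ are forbidden. Everything else (minimality of $(a,a,a)$ via $(H_3)$, the pseudo-orbit construction, and the cost bookkeeping) is routine, and no smoothness of $h$ beyond Lipschitz is used--so the proof covers the whole class $\mathcal H$ as advertised in Remark~\ref{rem:h}(i).
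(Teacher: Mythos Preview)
Your proof is correct and follows essentially the same route as the paper: both arguments use the minimality of $(a,a,a)$ together with $(H_4)$ to see that $a$ cannot be a minimizer of $y\mapsto h(a,y)+h(y,b)$, and then plug the resulting strictly cheaper intermediate point into the test sequence $a^{n-1}y_1 b^\infty$ to bound $H_\varphi(a^\infty,b^\infty)$. The only cosmetic difference is that the paper phrases the first step as ``the minimizer $x^*$ of $F(x)=h(a,x)+h(x,b)$ satisfies $x^*\neq a,b$,'' whereas you phrase it as ``$(a,a,b)$ is not minimal''---your version is in fact slightly leaner, since ruling out $x^*=b$ is unnecessary for the conclusion.
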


\begin{proof}
We first show that there exists $x^*\in[0,1]$ such that
\[
C=h(a,b)-h^*-\left(h(a,x^*)-h^*+h(x^*,b)-h^*\right)>0.
\]
Let $F(x)=h(a,x)-h^*+h(x,b)-h^*$ for $x\in[0,1]$.
Since $F$ is continuous on $[0,1]$, it has a minimum point $x^*\in[0,1]$. 
Hence it suffices to prove that $x^*\neq a,b$.
Assume that $x^*=a$. This implies that the word $aab$ is minimal.
By Lemma \ref{lemma:Ban33}, the word $aaa$ is also minimal and $(H_4)$ yields a contradiction. We can obtain $x^*\neq b$ in a similar way.
Therefore, since $x^*$ is a minimizer of $F$, we have
\begin{align*}
F(x^*)&=h(a,x^*)-h^*+h(x^*,b)-h^*\\
&<F(a)=h(a,a)-h^*+h(a,b)-h^*=h(a,b)-h^*,
\end{align*}
which implies that $C>0$.

Now we prove \eqref{c-min}.
Letting $\underline{y}^{(n)}=a^n x^*b^\infty$, we have
\begin{align*}
    \sum_{i=0}^{n} (h(y_i^{(n)}, y_{i+1}^{(n)})-h^*)=h(a,x^*)-h^*+h(x^*,b)-h^*.
\end{align*}
Fix $0<\theta <C/2$.
Take $\varepsilon>0$ and $N\geq 1$ such that 
\begin{align*}
    H_\varphi(a^\infty, b^\infty)&<\inf_{n\geq N}\left\{\sum_{i=0}^{n-1}(h(z_i, z_{i+1}-h^*)\mid \underline{z}\in B(a^\infty, b^\infty, n;\varepsilon)\right\}+\theta.
\end{align*}
Since we have $\underline{y}^{(n)}\in B(a^\infty, b^\infty, n+1;\varepsilon)$ for all sufficiently large $n$,
we obtain, enlarging $N$, if necessary
\begin{align*}
    H_\varphi(a^\infty, b^\infty)< \sum_{i=0}^{n}(h(y_i^{(n)}, y_{i+1}^{(n)})-h^*)+\theta
\end{align*}
for $n\geq N$.
Then we have
\begin{align*}
    &h(a,b)-h^*-H_\varphi(a^\infty, b^\infty)\\
    &\geq h(a,b)-h^*-\sum_{i=0}^{n}(h(y_i^{(n)}, y_{i+1}^{(n)})-h^*)-\theta\\
    &=h(a,b)-h^*-\left(h(a,x^*)-h^*+h(x^*,b)-h^*\right)-\theta>0.
\end{align*}
\end{proof}

By a similar way of the proof in Lemma \ref{lemm:c-min}, we immediately show the following:
\begin{corollary}
\label{co:c-n-min}
    Fix $n \in \N$. Let $a,b \in \mathrm{m}_h$ and $a \neq b$. For any $\{x_i\}_{i=0}^{n}$, $\{y_i\}_{i=0}^{n}$ with $x_0=y_n=a$ and $x_n=y_0=b$, both
    \[
    \sum_{i=0}^{n-1}(h(x_i,x_{i+1})-h^\ast) - H_{\varphi}(a^\infty,b^\infty),
    \text{and} \
    \sum_{i=0}^{n-1}(h(y_i,y_{i+1})-h^\ast) - H_{\varphi}(b^\infty,a^\infty)\]
    are positive.
\end{corollary}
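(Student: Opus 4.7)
The plan is to establish the first inequality by strong induction on $n$; the second follows by the symmetric argument with $a$ and $b$ interchanged. Let
\[
C_n := \inf\left\{\sum_{i=0}^{n-1}(h(x_i,x_{i+1}) - h^\ast) : x_0 = a, \ x_n = b\right\},
\]
which is attained by continuity of $h$ on the compact set $[0,1]^{n-1}$, so it suffices to show $C_n > H_{\varphi}(a^\infty, b^\infty)$. The base case $n=1$ is precisely Lemma~\ref{lemm:c-min}. A key preliminary bound is $H_\varphi(a^\infty,b^\infty) \leq C_m$ for every $m \geq 1$, obtained by running the approximation of Lemma~\ref{lemm:c-min} with the test sequence $a^M(x_1^\ast,\ldots,x_{m-1}^\ast)b^\infty$ built from a path attaining $C_m$, and letting $M \to \infty$.

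For the inductive step, fix a path $(a, x_1^\ast,\ldots,x_{n-1}^\ast, b)$ attaining the infimum $C_n$ and split on $x_{n-1}^\ast$. If $x_{n-1}^\ast = b$, deleting the terminal vertex produces an $(n-1)$-path whose sum is still $C_n$, since the removed edge $(b,b)$ contributes $h(b,b)-h^\ast = 0$; thus $C_{n-1} \leq C_n$, and the reverse inequality (append $b$ to any $(n-1)$-path) forces $C_n = C_{n-1}$, so the inductive hypothesis yields $C_n > H_\varphi(a^\infty, b^\infty)$. If instead $x_{n-1}^\ast \neq b$, insert a new vertex $y$ between $x_{n-1}^\ast$ and $b$; the sum of the resulting $(n+1)$-path differs from $C_n$ by $F(y) - F(b)$, where $F(y) := h(x_{n-1}^\ast, y) + h(y, b)$ and $F(b) = h(x_{n-1}^\ast, b) + h^\ast$. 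If $b$ were a minimizer of $F$ on $[0,1]$, then $(x_{n-1}^\ast, b, b)$ would be minimal; since $(b,b,b)$ is always minimal by Lemma~\ref{lemma:Ban33} and these two triples share the middle coordinate $b$ yet are distinct (as $x_{n-1}^\ast \neq b$), the twist condition $(H_4)$ would force $(x_{n-1}^\ast - b)(b - b) < 0$, i.e.\ $0 < 0$, a contradiction. Hence any minimizer $y^\ast$ of $F$ satisfies $F(y^\ast) < F(b)$, so $C_{n+1} < C_n$, and the preliminary bound $H_\varphi(a^\infty,b^\infty) \leq C_{n+1}$ closes the case.

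The main obstacle is the boundary situation $x_{n-1}^\ast = b$: a direct insertion at the end yields no strict decrease once the path attaining $C_n$ already terminates with repeated $b$'s, so the key trick is to isolate this degenerate case and reduce it to the shorter-length invariant $C_{n-1}$ via strong induction, closing the argument uniformly in $n$.
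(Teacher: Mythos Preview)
Your proof is correct and carries out precisely what the paper gestures at with ``by a similar way of the proof in Lemma~\ref{lemm:c-min}'': insert an extra vertex using $(H_4)$ to strictly lower the sum, and combine with the test-sequence upper bound $H_\varphi(a^\infty,b^\infty)\le C_m$.

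One small streamlining is available. The strong induction and the case split on $x_{n-1}^\ast=b$ can be bypassed: given any path $(a=x_0,\dots,x_n=b)$, pick the \emph{largest} index $j$ with $x_j\neq b$ (which exists since $a\neq b$) and insert between $x_j$ and $x_{j+1}=b$. Your $(H_4)$ argument (comparing the minimal triples $(x_j,b,b)$ and $(b,b,b)$) applies verbatim to produce an $(n{+}1)$-path with strictly smaller sum, and your preliminary bound, applied to that specific $(n{+}1)$-path rather than to $C_{n+1}$, gives $H_\varphi(a^\infty,b^\infty)\le(\text{new sum})<(\text{old sum})$ directly. This avoids the reduction to $C_{n-1}$ entirely, but your inductive route is of course equally valid.
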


Now we are ready to prove Main Theorem \ref{thm:aubry}.
\begin{proof}[Proof of Main Theorem \ref{thm:aubry}]
Note that
\[
\{a^\infty\mid a\in\mathrm{m}_h\}\subset \mathscr{M}_\varphi\subset\Omega_\varphi \subset \mathrm{m}_h^{\N_0}
\]
by Theorem~\ref{theorem:KMS25}(i) and Theorem~\ref{thm:criterion}(ii), (iii).
Since $\Omega_\varphi$ is $\sigma$-invariant, it suffices to show that $\underline{x}=ab\underline{x}'\notin\Omega_\varphi$ for any two distinct $a,b\in \mathrm{m}_h$ and $\underline{x}'\in (\mathrm{m}_h)^{\N_0}$.
Assume that $\underline{x}=ab\underline{x}' \in\Omega_\varphi$ for some $a,b\in\mathrm{m}_h$ and $\underline{x}'\in (\mathrm{m}_h)^{\N_0}$ with $a\neq b$.
Take arbitrary small $\theta>0$ and fix $\varepsilon>0$ sufficiently small  so that $|\widehat{S}_\varphi(\underline{x},\underline{x};\varepsilon)|<\theta$.
Since $\Omega_\varphi=\{\underline{y}\in X\mid H_\varphi(\underline{y},\underline{y})=0\}$ by Theorem \ref{theorem:H_finite}, we can take
$\{n_j\}_{j\in\N}\subset \N$ with $n_j\to\infty\ (j\to\infty)$ and $\{\underline{z}^{(j)}\}_{j\in\N}\subset X$ such that
\begin{align*}
    d_X(\underline{z}^{(j)},\underline{x})<\varepsilon, \ \text{and} \
    d_X(\sigma^{n_j}(\underline{z}^{(j)}),\underline{x})<\varepsilon
    \ \text{for all} \ j\in\N,
\end{align*}
and
\begin{align*}
    \lim_{j\to\infty} \sum_{i=0}^{n_j-1}(\varphi\circ\sigma^i(\underline{z}^{(j)})-\alpha_{\varphi})=\widehat{S}_\varphi(\underline{x},\underline{x};\varepsilon).
\end{align*}
In particular, $|z_0^{(j)}-a|<\varepsilon, |z_1^{(j)}-b|<2\varepsilon$ and $|z_{n_j}^{(j)}-a|<\varepsilon$ hold for all $j\in\N$.
Letting
\[
\underline{w}^{(j)}=(a b z_2^{(j)}\ldots z_{n_j-1}^{(j)})^\infty,
\]
we see that $w^{(j)}$ is $n_j$-periodic and not a fixed point.
Applying
Lipschitz continuity of $\varphi$, Lemma~\ref{lemm:c-min}, Corollary~\ref{co:c-n-min} and the triangle inequality \eqref{eq:triangle} of the Peierl’s barrier, we have
\begin{align*}
\sum_{i=0}^{n_j-1}(\varphi\circ\sigma^i(\underline{z}^{(j)})-\alpha_{\varphi})
&=\sum_{k=0}^{n_j-1} ( h(z_k^{(j)},z_{k+1}^{(j)})- h^*)\\
&\ge\sum_{k=0}^{n_j-1} ( h(w_k^{(j)},w_{k+1}^{(j)})- h^*) -(3+\sqrt{5})L_{h}\varepsilon\\
&= h(a,b) -h^\ast + \sum_{k=1}^{n_j-1} ( h(w_k^{(j)},w_{k+1}^{(j)})- h^*)-(3+\sqrt{5})L_{h}\varepsilon \\
&\ge \tilde{c}+H_{\varphi}(a^\infty,b^\infty) + H_{\varphi}(b^\infty,a^\infty) -(3+\sqrt{5})L_{h} \varepsilon.\\
&\ge \tilde{c} + H_{\varphi}(a^\infty,a^\infty)- (3+\sqrt{5}) {L_{h}} \varepsilon \\
&= \tilde{c} - (3+\sqrt{5}) {L_{h}} \varepsilon 
\end{align*}
Here $L_h$ is a Lipschitz constant of $h\in\mathcal{H}$.
Note that a potential depending only on the first two coordinates $\varphi$ is Lipschitz continuous on $X$ if and only if $h$ is Lipschitz continuous on $[0,1]^2$ (see Proposition 4.1 in \cite{KMS25}).
Therefore,
\[
0=\lim_{\varepsilon\to 0} \widehat{S}_\varphi(\underline{x},\underline{x};\varepsilon)
=\lim_{\varepsilon\to 0}\lim_{j\to\infty}
\sum_{i=0}^{n_j-1}(\varphi\circ\sigma^i(\underline{z}^{(j)})-\alpha_{\varphi})
\ge \tilde{c}>0,\]
which is a contradiction.
\end{proof}

Main Theorem~\ref{thm:aubry} gives complete descriptions of the Mather set $\mathscr{M}_{\varphi}$ and the Aubry set $\Omega_{\varphi}$ for $\varphi \in \mathcal{H}$.
Using Main Theorem~\ref{thm:aubry} and Theorem~\ref{thm:mane-set}, we also have the explicit formula of the Ma\~{n}\'{e} set of $\varphi\in\mathscr{H}$.
\begin{proof}[Proof of Main Theorem~\ref{thm:semi-static}]
Combining Main Theorem~\ref{thm:aubry} and Theorem~\ref{thm:mane-set}, we immediately obtain the desired result.
\end{proof}

\section{Equivalence on the Aubry set}
\label{sec:equivalence-Aubry}
As described in Section~\ref{sec:intro}, for a Lipschitz function $\varphi$ on $X$,
Theorem~\ref{theorem:KMS25}(ii) enables us to define an equivalence relation $\sim_\varphi$ on $\Omega_\varphi$ given by
\[
\underline{x}\sim_\varphi \underline{y}
\quad \text{if and only if}\quad 
H_\varphi(\underline{x},\underline{y})+H_\varphi(\underline{y},\underline{x})=0,
\]
where $\underline{x},\underline{y}\in \Omega_\varphi$.
In this section, we study equivalence classes of this equivalence relation for Lipschitz potentials depending only on the first two coordinates with $(H_3)$ and $(H_4)$
and
always assume that $\varphi$ is represented by
\[
\varphi(\underline{x})=h(x_0,x_1)
\]
with some $h\in \mathcal{H}$.

\subsection{Characterizations of equivalences}
From Main Theorem~\ref{thm:aubry}, it holds that
each element in $\Omega_{\varphi}$ is of the form $a^\infty$ with $a\in\mathrm{m}_h$.
Therefore, we want to characterize when $a^\infty\sim_\varphi b^\infty$ holds for $a,b\in\mathrm{m}_h$.
We will see that this equivalence relation is closely related with the connected components of $\mathrm{m}_h$.
{We emphasize that most of discussion in this subsection concerns ``connecting orbit" from $a^\infty$ to $b^\infty$ with $a,b\in \mathrm{m}_h$}.

We now introduce an equivalence relation on $\mathrm{m}_h$ derived from its connected components. For $a,b\in\mathrm{m}_h$, we write
\[
a\sim_{\mathrm{conn},h} b
\]
if $a$ and $b$ belong to the same connected component of $\mathrm{m}_h\subset [0,1]$.
It is clear that $\sim_{\mathrm{conn},h}$ is an equivalence relation in $\mathrm{m}_h$
, and
we denote the connected component of $\mathrm{m}_h$ containing $a\in\mathrm{m}_h$ by $C(a)$.
Then $\mathrm{m}_h$ can be expressed as
\[
\mathrm{m}_h  = \bigcup_{\theta\in\Theta} A_\theta,
\]
where $\{A_\theta\}_{\theta\in\Theta}$ is a family of disjoint closed intervals and $\Theta$ is an index set of connected components of $\mathrm{m}_h$.
Under the additional assumption \eqref{eq:add_condition}, as described in Theorem \ref{thm:equivalent_interval}, we show that both $a$ and $b$ belong to the same connected components of $\mathrm{m}_{h}$  if and only if $a^\infty \sim_{\varphi} b^\infty$.
We first prove that $a^\infty \sim_{\varphi} b^\infty$ implies $a\sim_{\mathrm{conn},h} b$, which holds without the additional assumption.
\begin{theorem}
\label{theorem:not_equivalent} 
    Let $a ,b \in \mathrm{m}_h$ be elements of $\mathrm{m}_h$ such that $a\nsim_{\mathrm{conn},h} b$, i.e., $a$ and $b$ do not belong to the same connected component of $\mathrm{m}_h$. Then $a^\infty \nsim_{\varphi} b^\infty$.
\end{theorem}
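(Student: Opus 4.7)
I argue by contradiction: suppose $H_\varphi(a^\infty,b^\infty)+H_\varphi(b^\infty,a^\infty)=0$. Taking WLOG $a<b$ and setting $c_1:=\sup C(a)$, $c_2:=\inf C(b)$, the hypothesis $a\nsim_{\mathrm{conn},h} b$ forces $c_1<c_2$ with both endpoints belonging to $\mathrm{m}_h$ (since $\mathrm{m}_h$ is closed). The key input is a uniform ``crossing gap'' from $(H_3)$: applied with $\xi_1=\alpha\in[0,c_1]$, $\xi_2=\beta\in[c_2,1]$, $\eta_1=\alpha$, $\eta_2=\beta$, it gives
\[
h(\alpha,\beta)+h(\beta,\alpha)>h(\alpha,\alpha)+h(\beta,\beta)\ge 2h^*,
\]
so by continuity and compactness of $[0,c_1]\times[c_2,1]$ there exists $\delta_0>0$ with $h(\alpha,\beta)+h(\beta,\alpha)-2h^*\ge\delta_0$ uniformly on that rectangle.

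Next, I turn the equivalence assumption into concrete near-optimal paths. For any $\eta,\varepsilon>0$ small I select long sequences $\underline{w}\in B(a^\infty,b^\infty,n;\varepsilon)$ and $\underline{u}\in B(b^\infty,a^\infty,m;\varepsilon)$ with
\[
\sum_{i=0}^{n-1}(h(w_i,w_{i+1})-h^*)+\sum_{j=0}^{m-1}(h(u_j,u_{j+1})-h^*)<\eta,
\]
and concatenate them into a periodic word $\underline{v}$ of period $N=n+m$. Lipschitz continuity of $h$ controls the two junctions, yielding $\sum_{i=0}^{N-1}(h(v_i,v_{i+1})-h^*)<\eta+O(\varepsilon)$, while by construction $v_0\approx a\le c_1$ and $v_n\approx b\ge c_2$, so $\underline{v}$ visits both sides of the gap.

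The contradiction will come from a Bangert-type swap argument. Set $\underline{v}^{\vee\wedge}_i:=\min(\max(v_i,c_1),c_2)$. Two successive swaps of $\underline{v}$ against the constant orbits $c_1^\infty$ and $c_2^\infty$ (standard consequences of $(H_3)$), together with the nonnegativity of the discarded periodic ``tails'' supplied by Lemma~\ref{lemma:Ban33}, give
\[
\sum_{i=0}^{N-1}(h(v_i,v_{i+1})-h^*)\ge\sum_{i=0}^{N-1}(h(v^{\vee\wedge}_i,v^{\vee\wedge}_{i+1})-h^*).
\]
The projected orbit $\underline{v}^{\vee\wedge}$ is $N$-periodic with values in $[c_1,c_2]$ and attains both endpoints. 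Partitioning a period by whether $v^{\vee\wedge}_i$ equals $c_1$, equals $c_2$, or lies strictly between, one shows this last sum is at least $\delta_0-O(\varepsilon)$: any adjacent pair with one coordinate near $c_1$ and the other near $c_2$ contributes $\ge\delta_0$ via the first step, while any segment passing through $(c_1,c_2)$ contributes positively because $h(x,x)>h^*$ strictly there. Combining, $\delta_0\le\eta+O(\varepsilon)$, which contradicts $\delta_0>0$ once $\eta,\varepsilon$ are small.

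\textbf{Main obstacle.} The principal difficulty is making the Bangert swap quantitative: the swap lemma extracted from $(H_3)$ yields only strict positivity at crossings, not an a priori uniform bound. Combining the compact-set estimate of the first step (handling ``jump''-type crossings over the full gap $[c_1,c_2]$) with a quantitative analysis of orbits that traverse $(c_1,c_2)$ slowly (where $h(x,x)-h^*$ is bounded below by a positive continuous function on any compact subinterval) is the delicate step that finally produces the uniform constant $\delta_0$.
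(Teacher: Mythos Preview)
Your overall architecture---concatenate near-optimal paths $a\to b$ and $b\to a$ into a single periodic loop and then seek a uniform positive lower bound on its Birkhoff sum---matches the paper's. The Bangert swap against $c_1^\infty$ and $c_2^\infty$ is a pleasant addition not in the paper, and it is correct (using Lemma~\ref{lemma:Ban33} to discard the truncated tails). One minor slip: with $c_1:=\sup C(a)$ and $c_2:=\inf C(b)$ the open interval $(c_1,c_2)$ can contain other points of $\mathrm{m}_h$ (think $\mathrm{m}_h=\{0,1/2,1\}$, $a=0$, $b=1$), so your later claim ``$h(x,x)>h^*$ strictly there'' is false as stated. You should instead choose $c_1<c_2$ to be the endpoints of an actual gap of $\mathrm{m}_h$ between $a$ and $b$.

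The substantive problem is the step you flag as the main obstacle: after projection you assert that the periodic orbit in $[c_1,c_2]$ has sum $\ge\delta_0-O(\varepsilon)$, but the argument you sketch does not deliver this. The bound $h(x,x)-h^*\ge m>0$ on a compact subinterval of $(c_1,c_2)$ controls $\sum(h(y_i,y_i)-h^*)$, not $\sum(h(y_i,y_{i+1})-h^*)$, and the difference can be large and negative; moreover your $\delta_0$ was set up on $[0,c_1]\times[c_2,1]$, which is disjoint from where the projected orbit lives. The paper closes exactly this gap with a dichotomy you do not reproduce: if the concatenated loop has some coordinate at distance $\ge\delta$ from $\mathrm{m}_h$, one invokes Lemma~\ref{lemma:Yu27} (whose proof genuinely uses $(H_4)$) to get $\phi(\delta)>0$; otherwise every coordinate lies in a $\delta$-neighborhood of one of the two components, so there is a clean ``up'' step and a clean ``down'' step, and the loop splits via $(H_3)$ into two sub-loops (each $\ge0$ by Lemma~\ref{lemma:Ban33}) plus a cross term $h(\alpha,\beta)+h(\beta',\alpha')-h(\alpha,\alpha')-h(\beta',\beta)\ge c>0$ by compactness. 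Your projection does simplify the geometry, but it does not remove the need for this two-case analysis; without Lemma~\ref{lemma:Yu27} or an equivalent, the ``slow traversal'' case remains open.
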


For the proof of Theorem~\ref{theorem:not_equivalent},
we employ the following result, which is a slightly 
extended version of Lemma 2.7 of \cite{Yu22}.

\begin{lemma}[Lemma 4.9 of \cite{KMS25}]
\label{lemma:Yu27}
    Set
    \[\phi(\delta)=\inf_{n \in \N} \phi(\delta;n)\]
    where
    \[\phi(\delta;n)=\inf \left\{\sum_{i=0}^{n-1}\left(\varphi\circ \sigma^i(\underline{x})-\alpha_\varphi\right)\mid \underline{x} \in X(n), \ \max_{0 \le i \le n-1} d_{\R}({x}_i, \mathrm{m}_h) \ge \delta \right\}
    \]
    and $X(n)$ is given by $\eqref{eq:xn-periodic}$.
    Then $\phi(\delta)>0$ if $\delta>0$.
\end{lemma}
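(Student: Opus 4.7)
The plan is to argue by contradiction. Suppose $\phi(\delta_0) = 0$ for some $\delta_0 > 0$. Since $\phi(\delta_0;n) \geq 0$ for every $n \in \N$ by Lemma \ref{lemma:Ban33}, there exist sequences $n_k \in \N$ and $\underline{x}^{(k)} \in X(n_k)$ satisfying $\max_{0 \leq i \leq n_k-1} d_\R(x^{(k)}_i, \mathrm{m}_h) \geq \delta_0$ and $\sum_{i=0}^{n_k-1}(h(x^{(k)}_i, x^{(k)}_{i+1}) - h^\ast) \to 0$. After a cyclic reindexing, which preserves both $X(n_k)$ and the sum over a full period, we may assume $d_\R(x^{(k)}_0, \mathrm{m}_h) \geq \delta_0$. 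The goal is to extract an element of the Aubry set $\Omega_\varphi$ whose zeroth coordinate lies at distance $\geq \delta_0$ from $\mathrm{m}_h$, which contradicts Theorem \ref{thm:criterion}(iii).

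I split the argument depending on whether $\{n_k\}$ is bounded. If $\{n_k\}$ has a bounded subsequence, I pass to a constant period $n_k = n_0$, use compactness of $X(n_0) \subset X$ to extract a convergent limit $\underline{x}^* \in X(n_0)$, and then the continuity of the finite sum together with Lemma \ref{lemma:Ban33} forces $\underline{x}^* = a^\infty$ for some $a \in \mathrm{m}_h$. Continuity of $d_\R(\cdot,\mathrm{m}_h)$ and closedness of $\mathrm{m}_h$ preserve $d_\R(x^*_0, \mathrm{m}_h) \geq \delta_0$ in the limit, giving a contradiction.

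The substantive case is $n_k \to \infty$. Using compactness of $X = [0,1]^{\N_0}$, extract a convergent subsequence $\underline{x}^{(k)} \to \underline{x}^* \in X$ with $d_\R(x^*_0, \mathrm{m}_h) \geq \delta_0$. The key observation is that periodicity $\sigma^{n_k}(\underline{x}^{(k)}) = \underline{x}^{(k)}$ combined with $\underline{x}^{(k)} \to \underline{x}^*$ places $\underline{x}^{(k)}$ in the Ma\~n\'e cylinder $B(\underline{x}^*, \underline{x}^*, n_k; \varepsilon)$ for every $\varepsilon > 0$ and every sufficiently large $k$. From the definition of $S_\varphi$ I then obtain
\[
S_\varphi(\underline{x}^*, \underline{x}^*; \varepsilon) \leq \sum_{i=0}^{n_k-1}(h(x^{(k)}_i, x^{(k)}_{i+1}) - h^\ast) \longrightarrow 0,
\]
so $S_\varphi(\underline{x}^*, \underline{x}^*) \leq 0$ after letting $\varepsilon \to 0$. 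Combined with the lower bound $S_\varphi(\underline{x}^*, \underline{x}^*) \geq 0$ coming from \eqref{eq:mane_calibrated} applied to any Lipschitz subaction of $\varphi$ (whose existence for Lipschitz $\varphi$ is standard), we conclude $S_\varphi(\underline{x}^*, \underline{x}^*) = 0$, i.e., $\underline{x}^* \in \Omega_\varphi$. Theorem \ref{thm:criterion}(iii) then yields $x^*_0 \in \mathrm{m}_h$, contradicting the choice of $\underline{x}^*$.

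The main difficulty lies in the unbounded case. The limit $\underline{x}^*$ need not be periodic, so the argument cannot exploit any finite-period structure of $\underline{x}^*$ and must instead work directly with the $\varepsilon$-approximations defining the Ma\~n\'e potential. The small technical point to be checked carefully is that $\underline{x}^{(k)}$ really lies in $B(\underline{x}^*, \underline{x}^*, n_k; \varepsilon)$; here periodicity is used twice, once for the initial coordinate and once (via $\sigma^{n_k}(\underline{x}^{(k)}) = \underline{x}^{(k)}$) for the terminal coordinate, so that both endpoints of the pseudo-orbit sit within $\varepsilon$ of $\underline{x}^*$.
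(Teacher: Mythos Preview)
Your argument is clean and the mechanics are correct: the cyclic reindexing, the compactness extraction in both cases, and the use of periodicity to place $\underline{x}^{(k)}$ in $B(\underline{x}^*,\underline{x}^*,n_k;\varepsilon)$ are all fine. The problem is logical circularity in the unbounded case.

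The paper does not prove this lemma at all; it simply cites \cite{KMS25} and notes that $(H_3)$ and $(H_4)$ are what is needed. That is the point: in \cite{KMS25} this is Lemma~4.9, sitting in the section where Main Theorem~3 (your Theorem~\ref{thm:criterion}) is proved, and it is precisely the uniform lower bound that drives the proof of Theorem~\ref{thm:criterion}(iii). Showing $\Omega_\varphi\subset\mathrm{m}_h^{\N_0}$ amounts to taking $\underline{x}\in\Omega_\varphi$, producing near-periodic pseudo-orbits with vanishing action from $H_\varphi(\underline{x},\underline{x})=0$, and then invoking exactly this $\phi(\delta)>0$ estimate to rule out any coordinate straying from $\mathrm{m}_h$. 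So when you close your unbounded case by appealing to Theorem~\ref{thm:criterion}(iii), you are almost certainly invoking the very lemma you are trying to prove.

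The intended proof (tracing back through \cite{KMS25} to \cite{Yu22} and \cite{Ban88}) is a direct variational argument on finite periodic configurations using $(H_3)$ and $(H_4)$: one works with minimal periodic configurations, uses the Aubry crossing lemma coming from $(H_3)$ to force monotonicity, and extracts a positive lower bound by a compactness argument that never leaves the finite-word picture or touches $\Omega_\varphi$. Your bounded-$n_k$ case is in this spirit and is self-contained via Lemma~\ref{lemma:Ban33}; it is the unbounded case that needs to be redone without the detour through the Aubry set.
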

\begin{proof}
See \cite{KMS25}. We need $(H_3)$ and $(H_4)$ for the proof.
\end{proof}

\begin{remark}
\label{rem:Yu27}
    Since $\varphi$ depends on the first two coordinates, only words of length $n+1$ affect the infimum in $\phi(\delta;n)$.
    Hence $\phi(\delta;n)$ can be written as the infimum of 
    \[
        \sum_{i=0}^{n-1}\left(\varphi\circ \sigma^i(\underline{x})-\alpha_\varphi\right)
    \]
    over all $\underline{x}\in X$ such that its first $n+1$ coordinates $x_0\cdots x_{n-1}x_n$ satisfy $x_0=x_n$ and $\displaystyle\max_{0 \le i \le n-1} d_{\R}({x}_i, \mathrm{m}_h) \ge \delta$.
\end{remark}

\begin{proof}[Proof of Theorem~\ref{theorem:not_equivalent}]
We show only the case of $\mathrm{m}_h=A_0 \cup A_1$, i.e., $a \in A_0$, $b \in A_1$ and
\[
    \max A_0<\min A_1.
\]
Set
\begin{align*}
\delta \in \left(0, \frac{1}{3}d_{\R}(A_0,A_1) \right), \ \text{and}
\ A_i(\varepsilon)=\{ x \in [0,1] \mid d_{\R}(x,A_i) \le \varepsilon\}
\end{align*}
for $\varepsilon>0,i=1,2$.
Let $\{\varepsilon_k\}_{k \in \N}$ be a monotone decreasing positive sequence with $\varepsilon_k \to 0$ as $k \to \infty$.
Take $\underline{z}^{(k)} \in B(a^\infty,b^\infty,n_k,\varepsilon_k)$ and $\underline{w}^{(k)} \in B(b^\infty,a^\infty,m_k,\varepsilon_k)$ 
such that
\begin{align*}
    \lim_{k\to \infty} \sum_{i=0}^{n_k-1} \left(h(z_i^{(k)},z_{i+1}^{(k)})-h^*\right)&=H_\varphi(a^\infty,b^\infty),\\
    \lim_{k\to \infty} \sum_{i=0}^{m_k-1} \left(h(w_i^{(k)},w_{i+1}^{(k)})-h^*\right)&=H_\varphi(b^\infty,a^\infty).
\end{align*}
From the assumption, $\delta$ is positive and we assume $\varepsilon_k < \delta$.
Set
\[
    \underline{v}^{(k)}=(a z_1^{(k)} \cdots z_{n_k-1}^{(k)}b w_1^{(k)} \cdots w_{m_k-1}^{(k)} )^\infty.
\]
In view of Remark \ref{rem:Yu27}, suppose that
$\{\underline{v}^{(k)}\}$ satisfies
\[
    \max_{0 \le i \le n_k+m_k-1} d_{\R}({v}_i^{(k)}, \mathrm{m}_h) \ge {\delta}
\]
for infinitely many $k \in \N$.
Then it follows from Lemma \ref{lemma:Yu27} that, for sufficiently large $k$ so that $4L_h \varepsilon_k<\frac{1}{2}\phi(\delta)$, we obtain
\begin{align*}
    &\sum_{i=0}^{n_k-1} (h(z_i^{(k)},z_{i+1}^{(k)})-h^\ast)
    +\sum_{i=0}^{m_k-1} (h(w_i^{(k)},w_{i+1}^{(k)})-h^\ast)\\
    &\ge \sum_{i=0}^{n_k+m_k-2} (h(v_i^{(k)},v_{i+1}^{(k)})-h^\ast) - 4L_{h}\varepsilon_k\\
    &\ge \phi(\delta)-4L_{h}\varepsilon_k > \frac{1}{2} \phi(\delta)>0.
\end{align*}
Otherwise, by taking a subsequence of $k$ if necessary, we see that $v^{(k)}$ consists of the elements in $A_0(\delta)$ or $A_1(\delta)$ for all $k\in\N_0$.
Moreover, there exists $\tilde{n}_k\in\{1,\ldots,n_k-1\}$ (resp. $\tilde{m}_k\in\{1,\ldots,m_k-1\}$) such that
\begin{align}
\label{eq:z-div}
     z_{\tilde{n}_k}^{(k)} \in A_0(\delta), \ z_{\tilde{n}_k+1}^{(k)} \in A_1(\delta)
\qquad\left(\text{resp. } 
    w_{\tilde{m}_k}^{(k)} \in A_1(\delta), \ w_{\tilde{m}_k+1}^{(k)} \in A_0(\delta)\right),
\end{align}
where $z_{n_k}^{(k)}:=b$ (resp. $w_{m_k}^{(k)}:=a$).

Hereafter, we assume that $h^\ast=0$ for simplicity. The case of $h^\ast \neq0$ is shown in a similar way.
Set 
\[
    c = \min_{x,w \in A_0(\delta), y,z \in A_1(\delta)} \left(h(x,y)+h(z,w) -h(x,w)-h(z,y)\right).
\]
It is easily seen that $(H_3)$ implies $c>0$ and that $c$ is well-defined since both $A_0$ and $A_1$ are closed sets and $A_0{(\delta)} \cap A_1{(\delta)} = \emptyset$.
By Lemma 4.5 of \cite{KMS25},
\[
    h(a z_1^{(k)} \cdots z_{\tilde{n}_k}^{(k)})+h(z_{\tilde{n}_k}^{(k)},w_{\tilde{m}_k+1}^{(k)})+ h(w_{\tilde{m}_k+1}^{(k)} \cdots w_{m_k-1}^{(k)} a ) \ge 0
\]
and
\[
    h(z_{\tilde{n}_k+1}^{(k)} \cdots z_{n_k-1}^{(k)}b w_1^{(k)} \cdots w_{\tilde{m}_k}^{(k)} )+ h(w_{\tilde{m}_k}^{(k)},z_{\tilde{n}_k+1}^{(k)} ) \ge 0,
\]
where
\[
    h(a_0a_1\cdots a_n) = \sum_{i=0}^{n-1} h(a_i,a_{i+1}).
\]
Thus we get
\begin{align*}
    &\sum_{i=0}^{n_k+m_k-1} \left(h(v_i^{(k)},v_{i+1}^{(k)})-h^*\right)\\
    &=h(a z_1^{(k)} \cdots z_{\tilde{n}_k}^{(k)})
    +h(z_{\tilde{n}_k}^{(k)},z_{\tilde{n}_k+1}^{(k)})+h(z_{\tilde{n}_k+1}^{(k)} \cdots z_{n_k-1}^{(k)}b w_1^{(k)} \cdots w_{\tilde{m}_k}^{(k)} )\\
    &\qquad+ h(w_{\tilde{m}_k}^{(k)},w_{\tilde{m}_k+1}^{(k)} )+h(w_{\tilde{m}_k+1}^{(k)} \cdots w_{m_k-1}^{(k)} a )\\
    &=\left\{h(a z_1^{(k)} \cdots z_{\tilde{n}_k}^{(k)})+h(z_{\tilde{n}_k}^{(k)},w_{\tilde{m}_k+1}^{(k)})+ h(w_{\tilde{m}_k+1}^{(k)} \cdots w_{m_k-1}^{(k)} a )\right\}\\
    &\quad +\left\{h(z_{\tilde{n}_k+1}^{(k)} \cdots z_{n_k-1}^{(k)}b w_1^{(k)} \cdots w_{\tilde{m}_k}^{(k)} )+ h(w_{\tilde{m}_k}^{(k)},z_{\tilde{n}_k+1}^{(k)} )\right\}\\
    &\quad +\left\{h(z_{\tilde{n}_k}^{(k)},z_{\tilde{n}_k+1}^{(k)}) + h(w_{\tilde{m}_k}^{(k)},w_{\tilde{m}_k+1}^{(k)}) - h(z_{\tilde{n}_k}^{(k)},w_{\tilde{m}_k+1}^{(k)}) - h(w_{\tilde{m}_k}^{(k)},z_{\tilde{n}_k+1}^{(k)} )\right\}\\
    &\ge 0+0+c>0,
\end{align*}
which is the desired result.
\end{proof}

\begin{remark}
We give a remark for $\eqref{eq:z-div}$.
In fact, we can divide $\underline{v}^{(k)}$ into only three parts as follows:
    \begin{align}
    \label{eq:div_v}
    ((a z_1^{(k)} \cdots z_{\tilde{n}_k}^{(k)})(z_{\tilde{n}_k+1}^{(k)} \cdots z_{n_k-1}^{(k)}b w_1^{(k)} \cdots w_{\tilde{m}_k}^{(k)} ) (w_{\tilde{m}_k+1}^{(k)} \cdots w_{m_k-1}^{(k)} ))^\infty
\end{align}
where each element of the first and third parts is in $A_0({\delta})$ and the others are in $A_1({\delta})$.
There is no need to consider the case that $\eqref{eq:div_v}$ is divided into five or more parts.
The proof is as follows:
Assume that the first and second parts of $\eqref{eq:div_v}$ can be divided into
\begin{align}
\label{eq:v-5part}
(a z_1^{(k)} \cdots z_{\tilde{n}_k}^{(k)})
(z_{\tilde{n}_k+1}^{(k)})
(z_{\tilde{n}_k+2}^{(k)} \cdots z_{\tilde{n}_l}^{(k)})
(z_{\tilde{n}_l+1}^{(k)} \cdots z_{n_k-1}^{(k)}b w_1^{(k)} \cdots w_{\tilde{m}_k}^{(k)} ) \cdots
\end{align}
such that
both $(a z_1^{(k)} \cdots z_{\tilde{n}_k}^{(k)})$ and $(z_{\tilde{n}_k+2}^{(k)} \cdots z_{\tilde{n}_l}^{(k)})$ consist of the elements in $A_0(\delta)$ and the rest in $A_1(\delta)$.
Since it holds that
\[
h(z_{\tilde{n}_k}^{(k)},z_{\tilde{n}_k+1}^{(k)} )
+h(z_{\tilde{n}_k+1}^{(k)},z_{\tilde{n}_k+2}^{(k)} ) -2h^\ast
> h(z_{\tilde{n}_k}^{(k)}, z_{\tilde{n}_k+2}^{(k)} ) - h^\ast
\]
by $(H_3)$, we can construct a sequence whose sum is smaller than $\eqref{eq:v-5part}$.
The other cases are proven in a similar way.
\end{remark}

Next, we show the converse statement of Theorem \ref{theorem:not_equivalent} under the additional assumption.
\begin{theorem}\label{thm:equivalent_interval}
Let $a,b$ be elements of $\mathrm{m}_h$ such that $a\sim_{\mathrm{conn},h} b$, i.e., both of $a$ and $b$ belong to the same connected component of $\mathrm{m}_h$.
Suppose that $h$ satisfies the following:
\begin{align}
\label{eq:add_condition}
    \lim_{\delta \to +0} L_1(z,z,\delta) + L_2(z,z,\delta) =0 \ \text{for any} \ z \in C(a)
\end{align}
where
\begin{align*}
    L_1(x,y,\delta)&=\frac{h(x+\delta,y)-h(x,y)}{\delta} \ \text{and} \\
    L_2(x,y,\delta)&=\frac{h(x,y+\delta)-h(x,y)}{\delta}.
\end{align*}
Then $a^\infty \sim_{\varphi} b^\infty$.
\end{theorem}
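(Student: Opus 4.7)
The plan is to show $H_{\varphi}(a^\infty,b^\infty)+H_{\varphi}(b^\infty,a^\infty)=0$, which by Theorem \ref{theorem:KMS25}(ii) is precisely the statement $a^\infty\sim_{\varphi}b^\infty$. The inequality ``$\ge 0$'' is immediate from the triangle inequality \eqref{eq:triangle} for the Peierl's barrier together with $H_{\varphi}(a^\infty,a^\infty)=0$, valid because $a^\infty\in\Omega_\varphi$ (Theorem \ref{theorem:H_finite}). The entire work is therefore to produce the matching upper bound ``$\le 0$''. We may assume $a<b$ without loss of generality; then $[a,b]\subset C(a)\subset \mathrm{m}_h$, so that $h(z,z)=h^\ast$ for every $z\in[a,b]$---this is the decisive feature that the connected-component hypothesis provides.

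For each $\varepsilon>0$, I would exhibit twin admissible orbits that walk in equal small steps along $[a,b]$. Fix $N$ large enough that $(b-a)/2^{N+1}<\varepsilon$, and for each large $m$ set $\delta=(b-a)/m$ and $u_j=a+j\delta$. Define
\[
\underline{z}^{(m)}=a^{N+1}u_1u_2\cdots u_{m-1}b^\infty,\qquad \underline{w}^{(m)}=b^{N+1}u_{m-1}u_{m-2}\cdots u_1 a^\infty.
\]
These belong to $B(a^\infty,b^\infty,N+m;\varepsilon)$ and $B(b^\infty,a^\infty,N+m;\varepsilon)$ respectively (the intermediate non-endpoint coordinates are weighted by $2^{-(i+1)}$ for $i\ge N+1$ and thus contribute at most $(b-a)/2^{N+1}<\varepsilon$), while $\sigma^{N+m}$ lands exactly on the target fixed point. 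Using $h(u_j,u_j)=h^\ast$, the Birkhoff sums along the transition collapse to
\[
A_m=\delta\sum_{j=0}^{m-1}L_2(u_j,u_j,\delta),\qquad B_m=\delta\sum_{j=0}^{m-1}L_1(u_j,u_j,\delta),
\]
so that $A_m+B_m=(b-a)\cdot m^{-1}\sum_{j=0}^{m-1}g_\delta(u_j)$, where $g_\delta(z):=L_1(z,z,\delta)+L_2(z,z,\delta)$.

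Now $g_\delta\ge 0$ by applying $(H_3)$ with $\xi_1=\eta_1=z$ and $\xi_2=\eta_2=z+\delta$ (together with $h(z,z)=h(z+\delta,z+\delta)=h^\ast$), $g_\delta$ is uniformly bounded by $2L_h$ (the Lipschitz constant of $h$), and $g_\delta\to 0$ pointwise on $[a,b]$ by hypothesis \eqref{eq:add_condition}. Combining the pointwise convergence with the uniform bound via an Egoroff/dominated-convergence argument, I would deduce $A_m+B_m\to 0$. Since each inner infimum in the definition of $H_\varphi(\cdot,\cdot;\varepsilon)$ is bounded above by the sum realized by our specific admissible choice, this gives $H_\varphi(a^\infty,b^\infty;\varepsilon)+H_\varphi(b^\infty,a^\infty;\varepsilon)\le \liminf_m A_m+\liminf_m B_m\le \lim_m(A_m+B_m)=0$, and letting $\varepsilon\to 0$ concludes the proof. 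The main obstacle is precisely this last limit: the hypothesis supplies only pointwise convergence of $g_\delta$, while the evaluation points $u_j$ themselves depend on $\delta$, so justifying the vanishing of the triangular Riemann sum $m^{-1}\sum_j g_\delta(u_j)$ requires the uniform bound paired with a careful measure-theoretic (Egoroff-type) argument rather than an elementary uniform-continuity estimate.
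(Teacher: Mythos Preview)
Your construction is essentially identical to the paper's: both use equally spaced nodes on $[a,b]$ (the paper packages the forward and backward legs into a single round-trip sequence $x_0^{(k)},\dots,x_{2k}^{(k)}$, while you treat them as two separate orbits) and both reduce the problem to showing
\[
\frac{b-a}{m}\sum_{j=0}^{m-1}g_{(b-a)/m}(u_j)\longrightarrow 0.
\]
At this point the paper simply sets $L(\delta)=\max_{z\in\mathrm{m}_h}g_\delta(z)$ and asserts $L(\delta)\to 0$; in other words, it upgrades the pointwise hypothesis \eqref{eq:add_condition} to uniform convergence on $C(a)$ without further comment, after which the bound $(b-a)L((b-a)/m)\to 0$ is immediate.

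You are more scrupulous in flagging this as the crux and proposing an Egoroff/dominated-convergence workaround. Be aware, though, that this route is not routine: Egoroff yields a small-measure ``bad'' set $E$ off which $g_\delta\to 0$ uniformly, but the evaluation points $u_j=a+j(b-a)/m$ form a specific null set that could in principle sit entirely inside $E$, so the naive estimate $\#\{j:u_j\in E\}/m\lesssim |E|/(b-a)$ is not automatic. Making this rigorous would require controlling the number of components of an open neighbourhood of $E$, perturbing the grid, or invoking extra structure of $h$; none of this is spelled out. If instead you follow the paper and take $\sup_z g_\delta(z)\to 0$ as given, the remainder of your argument (the membership in $B(a^\infty,b^\infty,N+m;\varepsilon)$, the telescoping of the Birkhoff sums, and the final $\liminf$ inequality) is correct and matches the paper's proof.
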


\begin{proof}[Proof]
Without loss of generality we may assume $a<b$. Let $k\in\N$.
    Set $\{x_i^{(k)}\}_{i=0}^{2{k}}$ by $x_i^{(k)} = a+\frac{i(b-a)}{k}$ for $i\le k$ and $x_{i}^{(k)} = b-\frac{(i-k)(b-a)}{k}$ otherwise.

    Firstly, we show the following.
    \begin{align}
    \label{eq:sum0}
    \lim_{k \to \infty} \sum_{i=0}^{2k-1} (h(x_i^{(k)},x_{i+1}^{(k)}) - h^\ast) =0.
    \end{align}
    Set
    \[
    L(\delta)= \max_{a' \in \mathrm{m}_h}L_1(a',a',\delta) + L_2(a',a',\delta).
    \]
    The right-hand side is well defined since $\mathrm{m}_h$ is a closed set and $L_i(x,x,\delta)$ is continuous with respect to $x$ for $i=1,2$, and $L(\delta) \to 0$ as $\delta \to 0$.
    Thus we get
    \begin{align*}
     &\sum_{i=0}^{2k-1} (h(x_i^{(k)},x_{i+1}^{(k)}) - h^\ast )\\
    &=\sum_{i=0}^{k-1} (h(x_i^{(k)},x_{i+1}^{(k)}) - h(x_i^{(k)},x_i^{(k)}))+\sum_{i=0}^{k-1} (h(x_{i+1}^{(k)},x_{i}^{(k)}) - h(x_i^{(k)},x_i^{(k)}))\\
    &=\frac{b-a}{k}\sum_{i=0}^{k-1} \left(\frac{h(x_i^{(k)},x_{i+1}^{(k)}) - h(x_i^{(k)},x_i^{(k)})}{(b-a)/k}+\frac{h(x_{i+1}^{(k)},x_{i}^{(k)}) - h(x_i^{(k)},x_i^{(k)})}{(b-a)/k}\right)\\
     &\le \frac{b-a}{k}\sum_{i=0}^{k-1} L\left( \frac{(b-a)}{k} \right)=(b-a) L\left( \frac{(b-a)}{k} \right) \to 0 \quad (k \to \infty).
    \end{align*}
    Set
    \[
    \varepsilon_k=\sum_{i=0}^{2k-1} (h(x_i^{(k)},x_{i+1}^{(k)}) - h^\ast).
    \]
    The above remarks imply $\varepsilon_k \to 0$ as $k \to \infty$.
    Fix $\varepsilon>0$ arbitrarily.
    Set 
    \begin{align*}
    \underline{v}^{(k)}&=a^k x_0^{(k)}\cdots x_{k-1}^{(k)}b^\infty,  \ \text{and} \ \\
    \underline{w}^{(k)}&=b^k y_0^{(k)}\cdots y_{k-1}^{(k)}a^\infty
    \end{align*}
    where $y_i^{(k)}= x_{i+k}^{(k)}$.
    Taking sufficiently large $K$, we have
    \[
    \underline{v}^{(k)} \in B(a^\infty,b^\infty,2{k},\varepsilon), \quad
    \underline{w}^{(k)} \in B(b^\infty,a^\infty,2{k},\varepsilon)
    \]
    for any $k \ge K$,
    and
    \[
    \lim_{k \to \infty}
    \sum_{i=0}^{2k-1}(\varphi\circ \sigma^i(\underline{v}^{(k)})-\alpha_{\varphi})+ \sum_{i=0}^{2k-1}(\varphi\circ \sigma^i(\underline{w}^{(k)})-\alpha_{\varphi})
    = \lim_{k \to \infty} \varepsilon_k=0.
    \]
    It implies that
    \[
    H_{\varphi}(a^\infty,b^\infty) + H_{\varphi}(b^\infty,a^\infty)=0,
    \]
    which is the desired result.
\end{proof}

\begin{remark}
\label{remark:differential}
We give two remarks for differentiability of $h\in \mathcal{H}$.
\begin{enumerate}[(i)]
    \item 
    We impose the additional condition $\eqref{eq:add_condition}$ to exclude the following example.
    Set
    \[
    h(x,y)=\frac{1}{2}|x-y| + \sqrt{1+(x-y)^2}.
    \]
    Then 
    $\mathrm{m}_h=[0,1]$.
    Indeed, $h(x,y)$ satisfies both $(H_3)$ and $(H_4)$, and it holds that $a^\infty \nsim b ^\infty$ for any $a,b \in \mathrm{m}_h$.
    We will discuss the details in Section \ref{subsec:Smallness of the quotient Aubry set}.

    \item Assume that $(a,b)\subset \mathrm{m}_h$ and $h$ is differentiable at each point in $\Lambda=\{(x,y)\in(a,b)^2\mid x=y\}$. 
    Then, for $x\in(a,b)\subset \mathrm{m}_h$, we have
    \[
	0=\frac{d}{dx} h(x,x)=D_1h(x,x)+D_2 h(x,x).
    \]
    Therefore, the condition
    \[
    \lim_{\delta\to +0}\frac{h(x+\delta,x)-h(x,x)}{\delta}+\frac{h(x,x+\delta)-h(x,x)}{\delta}=0
    \]
    always holds if $h$ is differentiable at each point in $\Lambda$. Note that the Lipschitz continuity of $h$ and Rademacher's theorem imply that $h$ is differentiable almost everywhere in $\R^2$,
    but $D_2h(x,x)$ (or $D_1h(x,x)$) may not be defined for every $x\in [0,1]$, e.g., $h(x,y)=|x-y|+x(x-y)$.
\end{enumerate}
\end{remark}

It is worth mentioning that, under a stronger condition, we can obtain an explicit formula for Peierl's barrier for two fixed points associated with two distinct points in a connected component of $\mathrm{m}_h$.
\begin{proposition}
\label{prop:explicit_estimate}
    Let $I_{a,b}$ be a closed interval between $a,b\in[0,1]$ ($a\neq b$).
    Assume that $h$ is $C^1$ near $\Lambda=\{(x,y)\in I_{a,b}^2\mid x=y\}$.
    If $I_{a,b}\subset \mathrm{m}_h$, then
    \[
    H(a^\infty,b^\infty)= \int_a^b D_2h(x,x) dx
    =- \int_a^b D_1h(x,x) dx=-H(b^\infty,a^\infty).
    \]
\end{proposition}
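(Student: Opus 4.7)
The plan is to upgrade the qualitative equivalence $a^\infty\sim_\varphi b^\infty$ produced by Theorem~\ref{thm:equivalent_interval} into the quantitative identity claimed, using the linearly interpolating paths that appear in its proof together with a Riemann-sum computation.

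First observe that $I_{a,b}\subset\mathrm{m}_h$ forces $h(x,x)\equiv h^\ast$ on $I_{a,b}$, and since $h$ is $C^1$ near $\Lambda$, differentiating along the diagonal gives $D_1h(x,x)+D_2h(x,x)=0$ on $I_{a,b}$. In particular the two integrals in the statement coincide up to sign, and the hypothesis \eqref{eq:add_condition} of Theorem~\ref{thm:equivalent_interval} is automatically satisfied on $C(a)\supset I_{a,b}$. Hence $a^\infty\sim_\varphi b^\infty$, so
\[
H_\varphi(a^\infty,b^\infty)+H_\varphi(b^\infty,a^\infty)=0.
\]

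Next I would extract matching one-sided upper bounds by Riemann summation. Assume $a<b$ without loss of generality, and put $x_i^{(k)}=a+i(b-a)/k$ and $\underline{v}^{(k)}=x_0^{(k)}x_1^{(k)}\cdots x_k^{(k)}b^\infty$, exactly as in the proof of Theorem~\ref{thm:equivalent_interval}. Since $\sigma^n(\underline{v}^{(k)})=b^\infty$ for every $n\ge k$ while $d_X(\underline{v}^{(k)},a^\infty)\to 0$, the word $\underline{v}^{(k)}$ lies in $B(a^\infty,b^\infty,n;\varepsilon)$ for all $n\ge k$ once $k$ is large. Using $h(x_i^{(k)},x_i^{(k)})=h^\ast$ and the mean value theorem, there exist $\xi_i^{(k)}\in[x_i^{(k)},x_{i+1}^{(k)}]$ with
\[
\sum_{i=0}^{k-1}\bigl(h(x_i^{(k)},x_{i+1}^{(k)})-h^\ast\bigr)=\frac{b-a}{k}\sum_{i=0}^{k-1}D_2h(x_i^{(k)},\xi_i^{(k)}),
\]
which is a Riemann sum converging, by continuity of $D_2h$ near $\Lambda$, to $\int_a^b D_2h(x,x)\,dx$ as $k\to\infty$. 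Because the tail $b^\infty$ contributes only zero terms, this estimate survives the $\liminf_{n\to\infty}$ in $H_\varphi(a^\infty,b^\infty;\varepsilon)$, and letting $\varepsilon\to 0$ yields $H_\varphi(a^\infty,b^\infty)\le \int_a^b D_2h(x,x)\,dx$. Running the same construction in reverse with $y_i^{(k)}=b-i(b-a)/k$ and $\underline{w}^{(k)}=y_0^{(k)}\cdots y_k^{(k)}a^\infty$ gives analogously $H_\varphi(b^\infty,a^\infty)\le -\int_a^b D_2h(x,x)\,dx$.

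Adding the two upper bounds produces exactly $0$, so the equivalence displayed above forces equality in each, and the identity $\int_a^b D_2h(x,x)\,dx=-\int_a^b D_1h(x,x)\,dx$ from the first step then delivers all four expressions in the proposition. The conceptually important point is not a separate lower-bound argument but the observation that pairing the equivalence relation on $\Omega_\varphi$ with the two symmetric upper bounds simultaneously pins down both Peierl's barriers; the only technical delicacy is the interchange of the $k\to\infty$ limit with the $\liminf_n$ in $H_\varphi(\cdot,\cdot;\varepsilon)$, which is automatic here because $\underline{v}^{(k)}$ and $\underline{w}^{(k)}$ terminate in the fixed points $b^\infty$ and $a^\infty$ respectively.
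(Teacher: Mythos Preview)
Your argument is correct and follows essentially the same route as the paper: obtain upper bounds $H_\varphi(a^\infty,b^\infty)\le\int_a^b D_2h(x,x)\,dx$ and $H_\varphi(b^\infty,a^\infty)\le-\int_a^b D_2h(x,x)\,dx$ via linearly interpolating test words and a Riemann-sum computation, then pair these with the fact that the sum of the two barriers is nonnegative to force equality. Two inessential differences: the paper first uses $(H_3)$ to prove that refining a partition strictly decreases $\sum(h(x_i,x_{i+1})-h(x_i,x_i))$, thereby identifying the infimum over all partitions with the integral, whereas you go straight to equally spaced partitions and the mean value theorem; and the paper closes with the triangle inequality $H_\varphi(a^\infty,b^\infty)+H_\varphi(b^\infty,a^\infty)\ge H_\varphi(a^\infty,a^\infty)=0$ rather than invoking Theorem~\ref{thm:equivalent_interval}. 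Your appeal to Theorem~\ref{thm:equivalent_interval} is harmless but slightly overstated, since its hypothesis \eqref{eq:add_condition} is phrased for all of $C(a)$ while you only have $C^1$ regularity on $I_{a,b}$; the direct triangle-inequality bound avoids this quibble entirely and is all you actually need.
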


\begin{proof}
	We only consider the case $a<b$.
	Let $x\in I_{a,b}$. By $(H_3)$, for any $x_0=a<x_1<x_2=b$, we get
    \begin{align*}
    h(x_0,x_2) +h(x_1,x_1)>h(x_0,x_1)+h(x_1,x_2),
    \end{align*}
    that is,
    \begin{align*}
    h(x_0,x_2) -h(x_0,x_0)>h(x_0,x_1)-h(x_0,x_0)+h(x_1,x_2)-h(x_1,x_1).
    \end{align*}
    By a similar computation, for any finite partition
    \[
    P=\{x_0=a<x_1<\ldots<x_n=b\}
    \]
    of $I_{a,b}$ and any refinement
    \[
    Q=\{x_0=a<x'_1<\ldots<x'_m=b\}
    \]
    of $P$ with $n<m$, we have
    \begin{align*}
    	\sum_{i=0}^{n-1} (h(x_i,x_{i+1}) -h(x_i,x_i))>
	\sum_{j=1}^{m-1} (h(x'_j,x'_{j+1})-h(x'_j,x'_j)).
    \end{align*}
    This implies
    \[
    	\inf_{P\in \mathcal{P}} \sum_{i=0}^{n-1} (h(x_i,x_{i+1}) -h(x_i,x_i))=\lim_{\delta\to 0} \inf_{P\in \mathcal{P}_\delta}\sum_{i=0}^{n-1} (h(x_i,x_{i+1}) -h(x_i,x_i)),
    \]
    where $\mathcal{P}$ stands for the set of  finite partitions
    \[
    P=\{x_0=a<x_1<\ldots<x_n=b\}
    \]
    of $I_{a,b}$
    and $\mathcal{P}_\delta\subset \mathcal{P}$ denotes the subset of partitions satisfying $\max_{i} |x_{i+1}-x_i|\le \delta$. 
    Fix sufficiently small $\theta>0$. By the definition of the Riemann integral, there exist $\delta>0$, $P\in\mathcal{P}_\delta$, and $t_i\in[x_i,x_{i+1}]\ (i=0,\ldots,n-1)$ such that
    \[
    	\left|\int_a^b D_2h(x,x) dx-\sum_{i=0}^{n-1} D_2h(t_i,t_i)(x_{i+1}-x_i)\right|<\theta.
    \]
    Moreover, since $h(x,y)$ is $C^1$ near $\Lambda$, by the mean value theorem and the uniform continuity of $D_2h$, there exist $0<\delta'<\delta$ and $\xi_i\in [x_i,x_{i+1}]\ (i=0,\ldots, n-1)$ such that
    \[h(x_i,x_{i+1})-h(x_i,x_i)=D_2 h(x_i,\xi_i)(x_{i+1}-x_i),
    \]
    \[
    \max_{i} |x_{i+1}-x_i|\le \delta',
    \]
    and
    \[
    |D_2 h(x_i,\xi_i)-D_2h(t_i,t_i)|<\theta.
    \]
    Therefore, we have
    \begin{align*}
    	&\left|\left(\sum_{i=0}^{n-1} h(x_i,x_{i+1}) -h(x_i,x_i)\right)-\int_{a}^b D_2 h(x,x) dx\right|\\
	&\le
	\sum_{i=0}^{n-1}|D_2 h(x_i,\xi_i)-D_2h(t_i,t_i)|(x_{i+1}-x_i)+\theta\\
	&\le \theta(b-a)+\theta=(b-a+1)\theta,
    \end{align*}
    which implies
    \[
    	\inf_{P\in \mathcal{P}} \sum_{i=0}^{n-1} (h(x_i,x_{i+1}) -h(x_i,x_i))=
	\int_{a}^b D_2 h(x,x) dx,
    \]
    where $P=\{x_0=a<x_1<\ldots<x_n=b\}$.
    Thus, we obtain
    \begin{align*}
    	\inf_{P\in \mathcal{P}} \sum_{i=0}^{n-1} (h(x_i,x_{i+1}) -h^\ast) &\ge
    	\inf_{P\in \mathcal{P}} \sum_{i=0}^{n-1} (h(x_i,x_{i+1}) -h(x_i,x_i))\\
	&\left(=\int_{a}^b D_2 h(x,x) dx\right)
    \end{align*}
    and the equality holds if and only if $I_{a,b}\subset \mathrm{m}$.

Now, we show that
    \[
    \inf_{P\in \mathcal{P}} \sum_{i=0}^{n-1} (h(x_i,x_{i+1}) -h^\ast) =H_\varphi(a^\infty,b^\infty).
    \]
    Let $P_j=\{x_0^{(j)}=a<x_1^{(j)}<\ldots<x_{n_j}^{(j)}\}\in\mathcal{P}\ (j\in\N)$ such that
    \[
    \lim_{j\to+\infty}\sum_{i=0}^{n_j-1} (h(x_i,x_{i+1}) -h^\ast)=\inf_{P\in \mathcal{P}} \sum_{i=0}^{n-1} (h(x_i,x_{i+1}) -h^\ast).
    \]
    Letting
    $\underline{w}^{(j)}=a^j x_0^{(j)}x_1^{(j)}\ldots x_{n_j}^{(j)}b^\infty\ (j\in\N)$,
    we have $\underline{w}^{(j)}\in B(a^\infty,b^\infty,j+n_j,2^{-j})$
    and thus
    \[
    \lim_{j\to +\infty} \widehat{H}_\varphi(a^\infty,b^\infty;2^{-j})\le \lim_{j\to+\infty}\sum_{i=0}^{n_j-1} (h(x_i,x_{i+1}) -h^\ast),
    \]
    which implies that
    \[
    H_\varphi(a^\infty,b^\infty)\le \int_a^b D_2h(x,x) dx
    \]
    if $I_{a,b}\subset \mathrm{m}_h$.
    Similarly, considering $\inf_{P\in \mathcal{P}} \sum_{i=0}^{n-1} (h(x_{i+1},x_{i}) -h(x_i,x_i))$,
    we have
    \[
    	H_\varphi(b^\infty,a^\infty)\le  \int_a^b D_1h(x,x) dx
	\quad\left(=\int_b^a D_2h(x,x) dx
\right)
    \]
    if $I_{a,b}\subset \mathrm{m}_h$ (see also Remark~\ref{remark:differential}(ii)).
    Therefore, it holds that
    \begin{align*}
    	0&\ge \left(H_\varphi(a^\infty,b^\infty)-\int_a^b D_2h(x,x) dx\right)+\left(H_\varphi(b^\infty,a^\infty)-\int_b^a D_2h(x,x) dx\right)\\
	&\ge H_\varphi(a^\infty,b^\infty)+H_\varphi(b^\infty,a^\infty)\ge0,
    \end{align*}
    which implies
    \[
    	H_\varphi(a^\infty,b^\infty)=\int_a^b D_2h(x,x) dx,\qquad 
	H_\varphi(b^\infty,a^\infty)=\int_b^a D_2h(x,x) dx
    \]
    if $I_{a,b}\subset \mathrm{m}_h$.
    \end{proof}

\subsection{Smallness of the quotient Aubry set}
\label{subsec:Smallness of the quotient Aubry set}
We now analyze a more topological aspect of the Aubry set and the equivalence relation $\sim_{\varphi}$ on it.
Note that, as mentioned at the beginning of this section, we always assume that $\varphi$ depends on the first two coordinates, and satisfies $(H_3)$ and $(H_4)$, i.e., $\varphi(\underline{x})=h(x_0,x_1)$ for some $h \in \mathcal{H}$.
The following proposition is easy.
\begin{proposition}[Main Theorem ~\ref{thm:Aubry_small}(i)]\label{prop:iso_Aubry_m}
    The Aubry set $\Omega_{\varphi}$ is isometric to $\mathrm{m}_h\subset\R$.
\end{proposition}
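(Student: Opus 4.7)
The plan is to use Main Theorem~\ref{thm:aubry} to reduce the problem to a direct distance computation in $X$. By that theorem, every element of $\Omega_\varphi$ has the constant-sequence form $a^\infty$ for some $a\in\mathrm{m}_h$, so the map
\[
\iota : \mathrm{m}_h \to \Omega_\varphi, \qquad a \mapsto a^\infty,
\]
is a well-defined bijection. It remains to verify that $\iota$ preserves distances with respect to the restriction of $d_X$ on $\Omega_\varphi$ and the restriction of $d_\R$ on $\mathrm{m}_h$.

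For this I would simply unpack the definition of $d_X$ on the pair $(a^\infty,b^\infty)$:
\[
d_X(a^\infty,b^\infty)=\sum_{i=0}^\infty \frac{|a-b|}{2^{i+1}} = |a-b| = d_\R(a,b).
\]
Hence $\iota$ is a bijective isometry between $(\mathrm{m}_h,d_\R)$ and $(\Omega_\varphi, d_X|_{\Omega_\varphi})$, which is exactly the claim.

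The only subtle point---and it is not really an obstacle here, since it has already been established---is the identification $\Omega_\varphi=\{a^\infty : a\in\mathrm{m}_h\}$ supplied by Main Theorem~\ref{thm:aubry}. Everything else is a one-line geometric-series computation, so no further technical work is required.
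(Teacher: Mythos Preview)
Your proof is correct and is essentially identical to the paper's own argument: both invoke Main Theorem~\ref{thm:aubry} to identify $\Omega_\varphi=\{a^\infty:a\in\mathrm{m}_h\}$, define the bijection $a\mapsto a^\infty$, and verify the isometry via the geometric-series computation $d_X(a^\infty,b^\infty)=\sum_{i\ge 0}|a-b|/2^{i+1}=|a-b|$.
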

\begin{proof}
    By Main Theorem~\ref{thm:aubry}, it holds that
    \[
    \Omega_{\varphi}=\{a^\infty\mid a\in \mathrm{m}_h\}.
    \]
    Consider a map $\xi:\mathrm{m}_h\to \Omega_{\varphi}$ given by $\xi(a)=a^\infty$. Clearly, this map is surjective. Moreover, we have  \[
    d_X(\xi(a),\xi(b))=d_X(a^\infty,b^\infty)=\sum_{i=0}^\infty \frac{|a-b|}{2^{i+1}}=|a-b|=d_{\R}(a,b),
    \]
    which implies that $\xi$ is a bijective isometry from $(\mathrm{m}_h,d_\R)$ to $(\Omega_\varphi,d_X)$.
\end{proof}

We next consider the quotient Aubry set for $h\in\mathcal{H}$.
As stated in Section~\ref{sec:intro},
\[
\delta_\varphi(\underline{x},\underline{y})=H_\varphi(\underline{x},\underline{y})+H_\varphi(\underline{y},\underline{x})
\]
is a pseudo-metric on $\Omega_\varphi$
since it is symmetric and both of its non-negativity and its triangle inequality follow from the triangle inequality \eqref{eq:triangle} of the Peierl's barrier $H_\varphi$ and the identity $\Omega_\varphi=\{\underline{x}\in X \mid H_\varphi(\underline{x},\underline{x})=0\}$ (Theorem~\ref{theorem:KMS25}).
Therefore, $\delta_\varphi$ induces a metric on the quotient Aubry set $\bar{\Omega}_\varphi:=\Omega_\varphi/\sim_\varphi$.
\begin{proposition}
    If $\mathrm{m}_h$ is totally disconnected,
    then $\Omega_{\varphi}$ is homeomorphic to $\bar{\Omega}_{\varphi}$.
\end{proposition}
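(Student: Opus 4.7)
The plan is to show that the canonical projection $\pi : \Omega_{\varphi} \to \bar{\Omega}_{\varphi}$ sending $\underline{x}$ to its equivalence class $[\underline{x}]$ is a bijective Lipschitz map, and then upgrade it to a homeomorphism by a standard compact/Hausdorff argument. Throughout, the starting point is Main Theorem~\ref{thm:aubry}, which gives $\Omega_{\varphi}=\{a^{\infty}\mid a\in \mathrm{m}_h\}$.

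First, I would establish injectivity of $\pi$. Given distinct $\underline{x},\underline{y}\in \Omega_{\varphi}$, write them as $a^{\infty},b^{\infty}$ with $a\ne b$ in $\mathrm{m}_h$. Total disconnectedness of $\mathrm{m}_h$ forces the connected components of $a$ and $b$ to be the singletons $\{a\}$ and $\{b\}$, so $a\nsim_{\mathrm{conn},h} b$; Theorem~\ref{theorem:not_equivalent} then yields $a^{\infty}\nsim_{\varphi}b^{\infty}$, i.e.\ $\pi(\underline{x})\ne\pi(\underline{y})$. Surjectivity is automatic.

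Second, I would establish Lipschitz continuity of $\pi$ through the estimate
\[
\delta_{\varphi}(\underline{x},\underline{y})\;\le\; 2L_{\varphi}\,d_X(\underline{x},\underline{y})
\qquad(\underline{x},\underline{y}\in\Omega_{\varphi}).
\]
Indeed, Theorem~\ref{theorem:H_finite} applied with $\underline{x}\in\Omega_{\varphi}$ and then (after swapping roles, since both points lie in $\Omega_{\varphi}$) with $\underline{y}\in\Omega_{\varphi}$ gives both $H_{\varphi}(\underline{x},\underline{y})\le L_{\varphi}d_X(\underline{x},\underline{y})$ and $H_{\varphi}(\underline{y},\underline{x})\le L_{\varphi}d_X(\underline{x},\underline{y})$; summing yields the claim. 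Note that I do not need a lower bound on either of $H_{\varphi}(\underline{x},\underline{y})$, $H_{\varphi}(\underline{y},\underline{x})$ individually, since $\delta_{\varphi}$ is by construction non-negative as a pseudo-metric on $\Omega_{\varphi}$.

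Finally, I would conclude by a soft topology argument: $\Omega_{\varphi}$ is compact, being a closed subset of the compact space $X$, and $\bar{\Omega}_{\varphi}$ is Hausdorff because $\delta_{\varphi}$ descends to a genuine metric on the quotient. Hence the continuous bijection $\pi$ from a compact space to a Hausdorff space is a homeomorphism. The proof contains no real obstacle; the only point that needs a little care is ensuring the Peierl's barrier estimate of Theorem~\ref{theorem:H_finite} can be applied in both directions, which is guaranteed precisely because both arguments lie in $\Omega_{\varphi}$.
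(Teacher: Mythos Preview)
Your argument is correct and follows the same overall strategy as the paper: injectivity via Theorem~\ref{theorem:not_equivalent} combined with total disconnectedness, and continuity of $\pi$ via the Lipschitz bound coming from Theorem~\ref{theorem:H_finite}. The only difference is the last step: the paper proves continuity of $\pi^{-1}$ by a direct sequential argument, whereas you invoke the standard fact that a continuous bijection from a compact space to a Hausdorff space is a homeomorphism. Your route is cleaner and avoids the slightly ad hoc limit computation; the paper's version has the minor advantage of not requiring one to recall that $\Omega_{\varphi}$ is closed (hence compact), but since this is stated explicitly after Proposition~\ref{prop:action_minimizing_aubry}, your appeal to it is entirely legitimate.
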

\begin{proof}
    By Theorem~\ref{theorem:not_equivalent}, 
    if $\pi(a^\infty)=\pi(b^\infty)$ holds (equivalently, $a^\infty\sim_{\varphi} b^\infty$ holds) 
    then $a\sim_{\mathrm{conn},h}b$, and the total disconnectedness of $\mathrm{m}_h$ implies $a=b$.
    Thus the natural projection $\pi:\Omega_{\varphi} \to \bar{\Omega}_{\varphi}$ is injective. The surjectivity of $\pi$ is trivial.
    Moreover, by Theorem~\ref{theorem:H_finite}, it is easy to see that $\pi$ is continuous
    since
    \[
	\delta_\varphi(\pi(\underline{x}),\pi(\underline{y}))
	=H_\varphi(\underline{x},\underline{y})+H_\varphi(\underline{y},\underline{x})\le 2L_{\varphi} d(\underline{x},\underline{y}). 
	\]
    Now we consider the continuity of $\pi^{-1}$.
    Assume that there exists $\{y_n\}_{n\in\N}\subset \bar{\Omega}_{\varphi}$ such that
    \[
    \lim_{n\to\infty}\delta_{\varphi}(y_n,y^*)=0 \ \text{for some} \ y^*\in\bar{\Omega}_{\varphi}.
    \]
    Then there exists $\{x_n\}_{n\in\N}\subset {\Omega}_{\varphi}$ such that $\pi(x_n)=y_n$. Pick $x^*\in{\Omega}_{\varphi}$ with $\pi(x^*)=y^*$. Thus we have
    \[
    \delta_{\varphi}(\pi(\lim_{n\to\infty}x_n),\pi(x^*))=0
    \]
    by the continuity of $\delta_\varphi$ and $\pi$. Since $\delta_\varphi$ is a metric on $\bar{\Omega}_\varphi$, we obtain $\pi(\lim_{n\to\infty}x_n)=\pi(x^*)$ and the invertibility of $\pi$ provides $\lim_{n\to\infty}x_n=x^*$. Thus we have
    \[
    \lim_{n\to\infty} \pi^{-1}(y_n)=\lim_{n\to\infty} x_n=x^*=\pi^{-1}(y^*),
    \]
    which completes the proof.
\end{proof}
Now we consider the quotient Aubry set for the case that $\mathrm{m}_h$ is not totally disconnected, i.e., contains some interval.
Naively, from Proposition~\ref{prop:iso_Aubry_m}, one expects that the quotient Aubry set corresponds to the quotient space of $\mathrm{m}_h$.
 Recall that, for $a,b\in\mathrm{m}_h$, we write $a\sim_{\mathrm{conn},h} b$ if $a$ and $b$ belong to the same connected component (denoted by $C(a)$) of $\mathrm{m}_h\subset \R$, which implies that $\sim_{\mathrm{conn},h}$ is an equivalent relation in $\mathrm{m}_h$.
 Set $\bar{\mathrm{m}}_h:=\mathrm{m}_h/\sim_{\mathrm{conn},h}$ endowed with the quotient topology. Note that $\bar{\mathrm{m}}_h$ is totally disconnected.
	As for the quotient Aubry set, it seems good that we introduce a pseudo-metric on $\mathrm{m}_h$ and obtain the induced quotient metric space.
	\begin{proposition}\label{prop:pseudo-metric}
		The function $\hat{d}_{\R}:\mathrm{m}_h\times{\mathrm{m}_h}\to\R$ given by
	\[
 	\hat{d}_{\R}(a,b)=d_H(C(a),C(b))
 	\]
 	is a pseudo-metric on ${\mathrm{m}_h}$, where $d_H$ stands for the Hausdorff distance.
	\end{proposition}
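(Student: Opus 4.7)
The plan is to verify the three pseudo-metric axioms (non-negativity, symmetry, triangle inequality) by pulling back the Hausdorff metric along the assignment $a \mapsto C(a)$. The only thing to check first is that the Hausdorff distance $d_H(C(a),C(b))$ actually makes sense.

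First I would observe that $\mathrm{m}_h = \{x \in [0,1] \mid h(x,x) = h^\ast\}$ is a closed subset of $[0,1]$ by the continuity of $h$, hence compact. Because connected subsets of $\R$ are intervals, each connected component $C(a)$ of $\mathrm{m}_h$ is an interval; being a connected component of a closed set in $\R$, it is itself closed in $\mathrm{m}_h$, and since $\mathrm{m}_h$ is closed in $[0,1]$, $C(a)$ is a closed (hence compact) subinterval of $[0,1]$. Therefore the Hausdorff distance
\[
d_H(C(a),C(b)) = \max\Bigl\{\sup_{x \in C(a)} \inf_{y \in C(b)} |x-y|,\ \sup_{y \in C(b)} \inf_{x \in C(a)} |x-y|\Bigr\}
\]
is well-defined and finite for all $a,b \in \mathrm{m}_h$.

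Next I would invoke the standard fact that $d_H$ is a genuine metric on the set $\mathcal{K}([0,1])$ of non-empty compact subsets of $[0,1]$. From this:
\begin{enumerate}[(i)]
\item Non-negativity $\hat{d}_\R(a,b) \ge 0$ is immediate.
\item Symmetry $\hat{d}_\R(a,b) = \hat{d}_\R(b,a)$ is immediate.
\item For the reflexive axiom, note that $a \in \mathrm{m}_h$ lies in its own component, so $C(a) = C(a)$ and $\hat{d}_\R(a,a) = d_H(C(a),C(a)) = 0$.
\item The triangle inequality
\[
\hat{d}_\R(a,c) = d_H(C(a),C(c)) \le d_H(C(a),C(b)) + d_H(C(b),C(c)) = \hat{d}_\R(a,b) + \hat{d}_\R(b,c)
\]
follows directly from the triangle inequality for $d_H$ on $\mathcal{K}([0,1])$.
\end{enumerate}

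This establishes $\hat{d}_\R$ as a pseudo-metric. I would then remark (as the natural follow-up used elsewhere in this section) that it fails to be a metric precisely because $a \sim_{\mathrm{conn},h} b$ gives $C(a) = C(b)$ and hence $\hat{d}_\R(a,b) = 0$, while conversely $\hat{d}_\R(a,b) = 0$ forces $C(a) = C(b)$ (two closed intervals at Hausdorff distance zero coincide) and hence $a \sim_{\mathrm{conn},h} b$. There is no serious obstacle here; the only mildly non-trivial point is verifying that the connected components are closed intervals (and thus compact), which is where the closedness of $\mathrm{m}_h$ is used.
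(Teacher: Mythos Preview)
Your proof is correct and takes essentially the same approach as the paper: both reduce the triangle inequality for $\hat{d}_\R$ directly to the triangle inequality for the Hausdorff distance $d_H$. Your version is more thorough in checking well-definedness (compactness of the components) and the other axioms, whereas the paper simply declares these trivial and writes out only the triangle inequality.
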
 
	\begin{proof}
		The only non-trivial property is the triangle inequality.
		From the triangle inequality for Hausdorff distance,
		for $a,b,c\in\mathrm{m}_h$, we obtain
		\begin{align*}
			\hat{d}_{\R}(a,b)&=d_H(C(a),C(b))\\
			&\le d_H(C(a),C(c))+d_H(C(c),C(b))\\
			&=\hat{d}_{\R}(a,c)+\hat{d}_{\R}(c,b),
		\end{align*}
		which completes the proof.
	\end{proof}
	Using $\hat{d}_\R$, for $a,b\in\mathrm{m}_h$, we see that $a\sim_{\mathrm{conn}, h} b$ holds if and only if $\hat{d}_\R(a,b)=0$.
	Note that any connected component of $\mathrm{m}_h$ is closed and hence compact.
	 Therefore, the pseudo-metric $\hat{d}_{\R}$ on ${\mathrm{m}}_h$
	induces the metric of the quotient space $\bar{\mathrm{m}}_h$.
 \begin{proposition}[Main Theorem ~\ref{thm:Aubry_small}(ii)]\label{prop:Lip_Aubry_to_m}
 	If $h\in\mathcal{H}$ satisfies \eqref{eq:add_condition} for all $a\in\mathrm{m}_h$,
    then the map $\bar{\xi}:\bar{\mathrm{m}}_h\to \bar{\Omega}_\varphi; [a]\mapsto \pi(a^\infty)$ is well-defined and it is a homeomorphism.
 \end{proposition}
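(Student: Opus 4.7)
My plan is to verify four things in sequence---well-definedness, bijectivity, continuity of $\bar\xi$, and continuity of $\bar\xi^{-1}$---by combining the previous structural results with a standard compactness argument. First I would handle well-definedness and bijectivity together. If $a\sim_{\mathrm{conn},h} b$, then Theorem~\ref{thm:equivalent_interval}---this is precisely where the hypothesis \eqref{eq:add_condition} enters---gives $a^\infty\sim_\varphi b^\infty$, so $\bar\xi([a]):=\pi(a^\infty)$ does not depend on the representative. Surjectivity is immediate from Main Theorem~\ref{thm:aubry}, which identifies $\Omega_\varphi$ with $\{a^\infty\mid a\in\mathrm{m}_h\}$, and injectivity is the contrapositive of Theorem~\ref{theorem:not_equivalent}: if $a^\infty\sim_\varphi b^\infty$, then $a$ and $b$ must lie in the same connected component of $\mathrm{m}_h$.

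For continuity of $\bar\xi$, I would factor it as $\mathrm{m}_h \xrightarrow{\xi}\Omega_\varphi\xrightarrow{\pi}\bar\Omega_\varphi$, descended to the quotient $\bar{\mathrm{m}}_h$. Here $\xi(a)=a^\infty$ is the isometry of Proposition~\ref{prop:iso_Aubry_m} and $\pi$ is the canonical projection. By Theorem~\ref{theorem:H_finite},
\[
\delta_\varphi(\pi(\underline{x}),\pi(\underline{y}))=H_\varphi(\underline{x},\underline{y})+H_\varphi(\underline{y},\underline{x})\le 2L_\varphi\, d_X(\underline{x},\underline{y})
\]
for all $\underline{x},\underline{y}\in\Omega_\varphi$, so $\pi\circ\xi$ is Lipschitz, in particular continuous. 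Since well-definedness says $\pi\circ\xi$ is constant on $\sim_{\mathrm{conn},h}$-classes, the universal property of the quotient topology on $\bar{\mathrm{m}}_h$ yields continuity of $\bar\xi$.

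Finally, I would deduce continuity of $\bar\xi^{-1}$ from compactness. The set $\mathrm{m}_h=\{x\in[0,1]\mid h(x,x)=h^\ast\}$ is closed in $[0,1]$, hence compact, and its continuous image $\bar{\mathrm{m}}_h$ under the quotient projection is therefore compact as well. Since $(\bar\Omega_\varphi,\delta_\varphi)$ is a metric space and thus Hausdorff, a continuous bijection from a compact space to a Hausdorff space is automatically a homeomorphism, which completes the proof. The only step with any real content is well-definedness: without \eqref{eq:add_condition} one can have $a\sim_{\mathrm{conn},h}b$ while $a^\infty\not\sim_\varphi b^\infty$ (compare Remark~\ref{remark:differential}(i) and the example discussed there), in which case $\bar\xi$ is not even defined. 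The remaining ingredients---the isometric identification of $\Omega_\varphi$ with $\mathrm{m}_h$, the Lipschitz estimate on $\pi$, and the compact-to-Hausdorff principle---are formal once that single input is in place.
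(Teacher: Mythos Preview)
Your treatment of well-definedness and bijectivity matches the paper's, and your argument for continuity of $\bar\xi$ is close in spirit---the paper uses the direct Lipschitz bound $\delta_\varphi(\bar\xi([a]),\bar\xi([b]))\le 2L_\varphi\,\hat d_\R([a],[b])$ rather than the quotient universal property, but either route works. The real issue is your compactness step for $\bar\xi^{-1}$.

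The space $\bar{\mathrm m}_h$ carries the metric $\hat d_\R([a],[b])=d_H(C(a),C(b))$ from Proposition~\ref{prop:pseudo-metric}, \emph{not} the quotient topology, and the quotient projection $\mathrm m_h\to(\bar{\mathrm m}_h,\hat d_\R)$ need not be continuous. For instance, if $\mathrm m_h$ contained an interval component $[0,1/2]$ together with singleton components $\{1/2+1/n\}$ accumulating at its right endpoint, then $1/2+1/n\to 1/2$ in $d_\R$ while
\[
\hat d_\R\bigl([1/2+1/n],[1/2]\bigr)=d_H\bigl(\{1/2+1/n\},[0,1/2]\bigr)=\tfrac12+\tfrac1n\not\to 0.
\]
In such a configuration $(\bar{\mathrm m}_h,\hat d_\R)$ fails to be compact, so your ``continuous bijection from compact to Hausdorff'' argument does not apply. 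Your appeal to the universal property likewise shows continuity of $\bar\xi$ only for the quotient topology, which can be strictly coarser than the $\hat d_\R$-topology. The paper instead argues sequentially for the inverse: given $y_n\to y_*$ in $\bar\Omega_\varphi$, choose representatives $a_n\in\mathrm m_h$, pass to a $d_\R$-convergent subsequence $a_{n_i}\to\tilde a$, and use Theorem~\ref{theorem:not_equivalent} to deduce $[\tilde a]=[a_*]$. (That route also passes through the step $\hat d_\R([a_{n_i}],[a_*])\to\hat d_\R([\tilde a],[a_*])$, which is continuity of the same projection in disguise, so the delicacy is not entirely absent from the paper either; but your compact-to-Hausdorff reduction makes the hidden hypothesis unavoidable and explicit.)
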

 \begin{proof}
 	By Theorems~\ref{theorem:not_equivalent} and \ref{thm:equivalent_interval}, we see that $a\sim_{\mathrm{conn},h} b$ holds if and only if $a^\infty\sim_\varphi b^\infty$, which implies the map $\bar{\xi}$ is well-defined.
	It is trivial that $\bar{\xi}$ is surjective, and the injectivity of $\bar{\xi}$ follows from Theorem~\ref{theorem:not_equivalent}.
    
    Since $\delta_{\varphi}$ does not depend the choice of representatives, taking into account Theorem~\ref{theorem:H_finite}, we compute
	\begin{align*}
		\delta_{\varphi}&(\bar{\xi}([a]),\bar{\xi}([b]))=\delta_{\varphi}(\pi(a^\infty),\pi(b^\infty))\\
		&=
		\inf\{H_{\varphi}(a'^\infty,b'^\infty)+H_{\varphi}(b'^\infty,a'^\infty)\mid a'^\infty\sim_\varphi a^\infty,\  b'^\infty\sim_\varphi b^\infty\}\\
		&\le \inf\{2L_{\varphi} d_{\R}(a',b'))\mid a' \sim_{\mathrm{conn},h} a,\  b' \sim_{\mathrm{conn},h} b\}\\
		&\le 2L_{\varphi} \hat{d}_{\R}(a,b),
	\end{align*}
	which implies that $\bar{\xi}$ is continuous.
    
    Now we show the continuity of $\bar{\xi}^{-1}$.
	Consider $y_n=\pi(a_n^\infty)\in \bar{\Omega}_\varphi\ (n\in\N)$ with $a_n\in\mathrm{m}_h$ such that $\delta_\varphi(y_n,y_*)\to 0\ (n\to \infty)$ for some $y_*=\pi(a_*^\infty)\in \bar{\Omega}_\varphi$ with some $a_*\in\mathrm{m}_h$.
    {Note that $\{a_n\}_{n\in\N}$ may not be a converge sequence.}
	Consider any convergent subsequence $\{a_{n_i}\}$ of $\{a_n\}$ and denote its accumulation point by $\tilde{a}\in\mathrm{m}_h$.
	By the continuity of $\delta_\varphi$ and $\pi$, we have $\delta_\varphi (\pi(\tilde{a}^\infty),\pi(a_*^\infty))=0$.
	Since $\delta_\varphi$ is a metric of $\bar{\Omega}_\varphi$, it holds that $\pi(\tilde{a}^\infty)=\pi(a_*^\infty)$, that is, $\tilde{a}^\infty\sim_\varphi a_*^\infty$.
	From Theorem~\ref{theorem:not_equivalent}, we obtain $[\tilde{a}]=[a_*]$.
	Then we compute
	\begin{align*}
		\lim_{i\to\infty} \hat{d}_\R(\xi^{-1}(y_{n_i}), \xi^{-1}(y_*))
		&=\lim_{i\to\infty} \hat{d}_\R(\xi^{-1}(\pi(a_{n_i}^\infty)), \xi^{-1}(\pi(a_*^\infty)))\\
		&=\lim_{i\to\infty} \hat{d}_\R([a_{n_i}], [a_*])\\
		&=\hat{d}_\R([\lim_{i\to\infty} a_{n_i}], [a_*])=\hat{d}_\R([\tilde{a}],[a_*])=0.
	\end{align*}
	Therefore, the map $\bar{\xi}$ is a homeomorphism and hence
	$\bar{\Omega}_\varphi$ is homeomorphic to $\bar{\mathrm{m}}_h$.
 \end{proof}

At the end of the section, we give the proof of Main Theorem~\ref{thm:Aubry_small}(iii).
\begin{theorem}[Main Theorem~\ref{thm:Aubry_small}(iii)]\label{thm:isometry}
	Let $\rho:\R\to\R$ be a $C^2$-function such that $\rho''>0$ on $\R$.
	{Let $\varphi_\rho$ be a Lipschitz potential depending only on the first two coordinates on $X$ given by
    $\varphi_\rho(\underline{x})=h_\rho(x_0,x_1)$ with $h_\rho\in\mathcal{H}\setminus C^1([0,1]^2,\R)$ of the form
	\[
		h_{\rho}(x,y)=\rho(x-y)+\frac{1}{2}|x-y|.
	\]
    }
	 Then, the quotient Aubry set $(\bar{\Omega}_{\varphi_\rho},\delta_{\varphi_\rho})$ of $\varphi_\rho$ is isometric to the unit interval $([0,1],d_\R)$.
\end{theorem}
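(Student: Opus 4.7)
The plan is to compute the pseudo-metric $\delta_{\varphi_\rho}$ on the Aubry set explicitly and to read off the isometry directly from the resulting formula. First I would observe that $h_\rho(x,x)=\rho(0)$ for every $x\in[0,1]$, so $h_\rho^\ast=\rho(0)$ and $\mathrm{m}_{h_\rho}=[0,1]$. Main Theorem~\ref{thm:aubry} together with Proposition~\ref{prop:iso_Aubry_m} then identifies $\Omega_{\varphi_\rho}=\{a^\infty\mid a\in[0,1]\}$ isometrically with $([0,1],d_\R)$ via $a\mapsto a^\infty$, so the entire task reduces to establishing
\begin{equation}\label{eq:target_isom}
\delta_{\varphi_\rho}(a^\infty,b^\infty)=|a-b|\qquad\text{for all } a,b\in[0,1].
\end{equation}

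For the lower bound I would exploit strict convexity of $\rho$. The supporting-line estimate $\rho(t)\ge \rho(0)+\rho'(0)\,t$, combined with the triangle inequality applied to $\tfrac12|\cdot|$, gives, for any finite sequence $\{z_i\}_{i=0}^n\subset[0,1]$,
\[
\sum_{i=0}^{n-1}\bigl(h_\rho(z_i,z_{i+1})-h_\rho^\ast\bigr)\ \ge\ \rho'(0)(z_0-z_n)+\tfrac12|z_0-z_n|.
\]
Any $\underline{z}\in B(a^\infty,b^\infty,n;\varepsilon)$ satisfies $|z_0-a|,|z_n-b|<2\varepsilon$, so passing to $\inf_{\underline{z}}$, then $\liminf_{n\to\infty}$, and finally $\varepsilon\to 0$ yields $H_{\varphi_\rho}(a^\infty,b^\infty)\ge \rho'(0)(a-b)+\tfrac12|a-b|$, together with its symmetric counterpart for $H_{\varphi_\rho}(b^\infty,a^\infty)$. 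Adding the two gives $\delta_{\varphi_\rho}(a^\infty,b^\infty)\ge|a-b|$.

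For the matching upper bound I would test against a linear interpolation. Given $m,n\in\N$, set $x_j=a+\tfrac{j(b-a)}{n}$ for $j=0,\ldots,n$ and define
\[
\underline{z}^{(m,n)}=\underbrace{a\cdots a}_{m+1}\,x_1x_2\cdots x_{n-1}\,b^\infty.
\]
Then $\sigma^{m+n}(\underline{z}^{(m,n)})=b^\infty$ and $d_X(\underline{z}^{(m,n)},a^\infty)\le |b-a|/2^{m+1}$, so for any $\varepsilon>0$ and $m=m(\varepsilon)$ large enough, $\underline{z}^{(m,n)}\in B(a^\infty,b^\infty,m+n;\varepsilon)$ for every $n\ge 1$. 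A direct telescoping computation gives
\[
\sum_{i=0}^{m+n-1}\bigl(h_\rho(z^{(m,n)}_i,z^{(m,n)}_{i+1})-h_\rho^\ast\bigr)=n\bigl(\rho(-(b-a)/n)-\rho(0)\bigr)+\tfrac12|a-b|,
\]
which tends to $\rho'(0)(a-b)+\tfrac12|a-b|$ as $n\to\infty$. Taking $\liminf$ in $n$ and then $\varepsilon\to 0$ produces the reverse inequality, and \eqref{eq:target_isom} follows. Since the right-hand side is strictly positive whenever $a\neq b$, every class of $\sim_{\varphi_\rho}$ inside $\Omega_{\varphi_\rho}$ is a singleton, so $[0,1]\ni a\mapsto \pi(a^\infty)\in\bar\Omega_{\varphi_\rho}$ is the desired bijective isometry.

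The main obstacle, compared with Theorem~\ref{thm:equivalent_interval}, is that condition~\eqref{eq:add_condition} fails for $h_\rho$: the contribution of $\tfrac12|x-y|$ to $L_1+L_2$ equals $1$ at every point of the diagonal, so one cannot produce an approximating path whose action between $a^\infty$ and $b^\infty$ is arbitrarily small. The lower bound above turns this failure into a quantitative estimate, showing that any admissible path must pay a cost at least $|a-b|$. The crux of the argument is therefore to exhibit a path that attains this cost, which is achieved by the equidistributed interpolation: the $\rho$-part yields $\rho'(0)(a-b)$ in the $n\to\infty$ limit thanks to strict convexity, while the $\tfrac12|\cdot|$-part is rigid and always contributes exactly $\tfrac12|a-b|$ regardless of the refinement.
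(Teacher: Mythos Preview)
Your proof is correct and shares its overall skeleton with the paper's: both identify $\mathrm{m}_{h_\rho}=[0,1]$, use the equidistributed interpolation $x_j=a+j(b-a)/n$ for the upper bound on $H_{\varphi_\rho}(a^\infty,b^\infty)$, and conclude $\delta_{\varphi_\rho}(a^\infty,b^\infty)=|a-b|$. The genuine methodological difference is in the lower bound. The paper argues via Jensen that the arithmetic progression is the unique minimizer among sequences with fixed endpoints, evaluates that minimum, and then invokes ``the discussion in the proof of Proposition~\ref{prop:explicit_estimate}'' to pass to $H_{\varphi_\rho}$; since that proposition assumes $h$ is $C^1$ near the diagonal, which $h_\rho$ is not, the reference is somewhat informal. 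Your route via the supporting-line inequality $\rho(t)\ge\rho(0)+\rho'(0)t$ together with the triangle inequality for $\tfrac12|\cdot|$ is more elementary and self-contained: it bounds \emph{every} Birkhoff sum from below by a continuous function of the endpoints $z_0,z_n$ alone, so the limits in $n$ and $\varepsilon$ are immediate and no minimizer analysis or $C^1$ hypothesis is needed.

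One omission to flag: the paper spends the first half of its proof verifying $(H_3)$ and $(H_4)$ for $h_\rho$, which you bypass by citing Main Theorem~\ref{thm:aubry} and Proposition~\ref{prop:iso_Aubry_m}. Since $h_\rho\notin\mathscr{H}$ (it is not $C^2$), you are implicitly using the $\mathcal{H}$-version of those results (Remark~\ref{rem:h}(i)), and although the theorem statement asserts $h_\rho\in\mathcal{H}$, the paper treats this as something to be established. Your computation of $\delta_{\varphi_\rho}$ does not itself use $(H_3)$ or $(H_4)$, but you do need $h_\rho\in\mathcal{H}$ to conclude that $\Omega_{\varphi_\rho}$ is \emph{exactly} $\{a^\infty:a\in[0,1]\}$ rather than possibly larger; otherwise the bijection $a\mapsto\pi(a^\infty)$ onto $\bar\Omega_{\varphi_\rho}$ is not guaranteed.
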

\begin{proof}
    We first show that $h_{\rho}$ satisfies $(H_3)$ and $(H_4)$.
    Let
    \[
    h_1(x,y)=\rho(x-y), \ \text{and} \ h_2(x,y)=\frac{1}{2} |x-y|.
    \]
    Since $D_2D_1h_1=-\rho''(x-y)<0$, $h_1$ satisfies the twist condition and hence $(H_3)$ and $(H_4)$ hold for $h_1$.
    It is easily seen that
    $h_2(\xi_1,\eta_1) + h_2(\xi_2,\eta_2)
    \le h_2(\xi_1,\eta_2) + h_2(\xi_2,\eta_1)$
    for $\xi_1<\xi_2$ and $\eta_1 < \eta_2$.
    Combining the condition $(H_3)$ for $h_1$, we deduce that $h_{\rho}=h_1+h_2$ also satisfies $(H_3)$.
    
    Now, we want to check $(H_4)$ for $h_{\rho}$.
    Let $\tilde{\rho}(z)=\rho(z)+\frac{1}{2} |z|$ for $z\in[0,1]$. By $\rho''>0$ on $\R$ and the convexity of $|z|$, we see that $\tilde{\rho}$ is strictly convex. 
    Fix two distinct points $x_{-1},x_1\in[0,1]$.
    From the strict convexity of $\tilde{\rho}$, we have
    \begin{align}\label{eqn:strictly_convex}
    \begin{split}
    \tilde{\rho}\left(\frac{x_1-x_{-1}}{2}\right)
    &=\tilde{\rho}\left(\frac{(x_1-x_0)+(x_{0}-x_{-1})}{2}\right)\\
    &\le\frac{\tilde{\rho}(x_{1}-x_0)}{2}+\frac{\tilde{\rho}(x_0-x_{-1})}{2}
    \end{split}
    \end{align}
    for each $x_0\in[0,1]$ and the equality holds if and only if $x_1-x_0=x_0-x_{-1}$, i.e., $x_0=\frac{x_1+x_{-1}}{2}$. This implies that the function $h_{\rho}(x_{-1},x_0)+h_{\rho}(x_0,x_1)$ of $x_0\in[0,1]$ takes its minimum if and only if  $x_0=\frac{x_1+x_{-1}}{2}$.
    Assume that both $(x_{-1},x_0,x_1)$ and $(x'_{-1},x_0,x'_1)$ with $(x_{-1},x_0,x_1)\neq (x'_{-1},x_0,x'_1)$ are minimal for $h_{\rho}$.
    Then, it holds that $x_0=\frac{x_1+x_{-1}}{2}=\frac{x'_1+x'_{-1}}{2}$ and thus $x_{-1}-x'_{-1}=x'_1-x_1$. Note that $x_{-1}-x'_{-1}(=x'_1-x_1)\neq 0$ since if not we have $x_{-1}=x'_{-1}$ and $x_{1}=x'_1$ but $(x_{-1},x_0,x_1)\neq (x_{-1},x_0,x_1)$.
    Therefore, we obtain
    $(x_{-1}-x'_{-1})(x_{1}-x'_{1})=-(x_{-1}-x'_{-1})^2<0$,
    which implies $(H_4)$ holds for $h_{\rho}$.
    
    Next, we turn to the quotient Aubry set of $\varphi_{\rho}$.
    Since $h_{\rho}(x,x)=\rho(0)$ for all $x\in[0,1]$, we obtain $\mathrm{m}_{h_{\rho}}=[0,1], h_{\rho}^*=\rho(0)$ and $\Omega_{\varphi_{\rho}}=\{a^\infty\mid a\in[0,1]\}$.
    Fix $a,b\in[0,1]$ with $a\neq b$.
    From a similar discussion to \eqref{eqn:strictly_convex}, for any finite sequence $\{x_i\}_{i=0}^{n}$ with $x_0=a$ and $x_n=b$, we see that
    the function $\sum_{i=0}^{n-1} (h_{\rho}(x_i,x_{i+1})-\rho(0))$ of $x_1,\ldots,x_{n-1}\in[0,1]$ takes its minimum if and only if  $x_i=a+i\frac{b-a}{n}$.
    Therefore, for arbitrary $\varepsilon>0$, taking sufficiently large $n=n(\varepsilon)\in\N$ so that $\frac{|b-a|}{n}+\frac{|b-a|}{2^n}<\varepsilon$ and letting $x^*_i=a+i\frac{b-a}{n}$ for $i\in\N_0$, we see that $\underline{x}:=x_0^\ast x_1^\ast\ldots x_{n-1}^\ast b^\infty$ belongs to $ B(a^\infty,b^\infty,{n},\varepsilon)$ and thus
    \begin{align*}
    &\inf_{\underline{z}\in B(a^\infty,b^\infty,n,\varepsilon)} \sum_{i=0}^{n-1} (h_{\rho}(z_i,z_{i+1})-\rho(0))\\
	&= \sum_{i=0}^{n-1} (h_{\rho}(x^*_i,x^*_{i+1})-\rho(0))\\
	&= \left( \sum_{i=0}^{n-1} (\rho\left(\frac{a-b}{n}\right)-\rho(0))\right)
		+\left(\sum_{i=0}^{n-1} \frac{|b-a|}{2n}\right)\\
	&=\left(\sum_{i=0}^{n-1} (\rho\left(\frac{a-b}{n}\right)-\rho(0))\right)+\frac{1}{2}|b-a|.
	\end{align*}
    Taking into account the discussion in the proof of Proposition \ref{prop:explicit_estimate},
	we obtain
	\[
	H_{\varphi_{\rho}}(a^\infty,b^\infty)={\rho'(0)(a-b)} +\frac{1}{2}|b-a|.
	\]
	Similarly, it holds that $H_{\varphi_{\rho}}(b^\infty,a^\infty)={\rho'(0)(b-a)} +\frac{1}{2}|b-a|$
	and consequently we have
	\begin{align}\label{eqn:isometric}
		\delta_{\varphi_{\rho}}(a^\infty,b^\infty)=
		H_{\varphi_{\rho}}(a^\infty,b^\infty)+H_{\varphi_{\rho}}(b^\infty,a^\infty)
		=d_\R(a,b).
	\end{align}
	This implies that 
    \[
    a^\infty \nsim b ^\infty \  \text{for any} \ a,b \in \mathrm{m}_{h_{\rho}}\  \text{with}\ a\neq b,
    \]
    which means that the quotient Aubry set $\bar{\Omega}_{\varphi_{\rho}}$ is represented as
    \[
    \bar{\Omega}_{\varphi_{\rho}}=\{a^\infty\mid a\in[0,1]\}.
    \]
	Moreover, from the identity \eqref{eqn:isometric}, we deduce that
	the map $\xi:[0,1]\to \bar{\Omega}_{h_{\rho}}; a\mapsto a^\infty$ is an isometry with respect to the metrics $d_\R$ on $\mathrm{m}_{h_\rho}=[0,1]$ and $\delta_{h_{\rho}}$ on $\bar{\Omega}_{h_{\rho}}$, which implies that $(\bar{\Omega}_{h_{\rho}},\delta_{h_{\rho}})$ is isometric to the unit interval $([0,1],d_\R)$.
	\end{proof}

\appendix
\section{Uniqueness of calibrated subactions}
\label{sec:appendix}
Although the uniqueness of calibrated subactions is out of our main interests for this paper,
it is worth to note the following immediate consequence from the remark after the proof of Theorem~\ref{thm:relation_H_u}.
{Lopes et. al. \cite{LMST} showed that calibrated forward subactions play an important role in large deviation principles in the zero temperature limit. 
Here we study a related notion of calibrated subaction and prove its generic uniqueness.}
We only consider the full shift with $[0,1]$ defined in Section~\ref{sec:intro} and a Lipschitz continuous function $\varphi$ on $X=[0,1]^{\N_0}$ with the metric $d_X$.
\begin{proposition}\label{prop:uniqueness_calibrated _subaction}
        Let $\varphi$ be a Lipschitz continuous function on $X$.
        If $\Omega_\varphi$ consists of a single periodic orbit, then the calibrated subaction of Lipschitz function $\varphi$ is unique up to adding a constant and it is given by $\underline{y}\in X\mapsto H_\varphi(\underline{x},\underline{y})\in\R$ for each $\underline{x}\in\Omega_\varphi$
    (this map does not depend on the choice of $\underline{x}\in\Omega_\varphi$).
\end{proposition}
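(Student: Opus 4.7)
The plan is to reduce the statement to the reformulation \eqref{eqn:reformulation_calibrated_subaction} by showing that, when $\Omega_\varphi$ is a single periodic orbit, the quotient $\Omega_\varphi/\sim_\varphi$ collapses to a single point; uniqueness up to an additive constant is then immediate from \eqref{eqn:reformulation_calibrated_subaction}.

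First I would set $\mathcal{O} = \{\sigma^k(\underline{x}_0)\}_{k=0}^{p-1}$ with $\sigma^p(\underline{x}_0)=\underline{x}_0$ and $\Omega_\varphi = \mathcal{O}$, and fix a Lipschitz calibrated subaction $u$. By Theorem~\ref{property_Peierl}, the map $v(\underline{y}):=H_\varphi(\underline{x}_0,\underline{y})$ is itself a Lipschitz calibrated subaction, and by definition $v(\underline{x}_0) = H_\varphi(\underline{x}_0,\underline{x}_0)=0$ since $\underline{x}_0 \in \Omega_\varphi$.

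Next I would show that $\underline{x}_0 \sim_\varphi \sigma^j(\underline{x}_0)$ for every $j$. Since $\underline{x}_0 \in \Omega_\varphi = A_\varphi$, Lemma~\ref{lemm:subaction_static} applied to $v$ gives the telescoping identity
\[
H_\varphi(\underline{x}_0,\sigma^j(\underline{x}_0)) = v(\sigma^j(\underline{x}_0)) - v(\underline{x}_0) = \sum_{k=0}^{j-1}\bigl(\varphi(\sigma^k(\underline{x}_0))-\alpha_\varphi\bigr).
\]
Applying the same lemma with base point $\sigma^j(\underline{x}_0)$ and the calibrated subaction $\underline{y}\mapsto H_\varphi(\sigma^j(\underline{x}_0),\underline{y})$, evaluated at $\sigma^{p-j}(\sigma^j(\underline{x}_0))=\underline{x}_0$, one obtains the symmetric identity for $H_\varphi(\sigma^j(\underline{x}_0),\underline{x}_0)$. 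Adding them yields the full periodic sum $\sum_{k=0}^{p-1}(\varphi(\sigma^k(\underline{x}_0))-\alpha_\varphi)$, which vanishes because $\delta_{\underline{x}_0\text{-orbit}}$ is an optimizing measure (so its integral of $\varphi-\alpha_\varphi$ is zero). Hence $H_\varphi(\underline{x}_0,\sigma^j(\underline{x}_0))+H_\varphi(\sigma^j(\underline{x}_0),\underline{x}_0)=0$, i.e.\ all points on $\mathcal{O}$ lie in the same $\sim_\varphi$-class.

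Having reduced $\Omega_\varphi/\sim_\varphi$ to a single class $[\underline{x}_0]$, the reformulation \eqref{eqn:reformulation_calibrated_subaction} collapses to
\[
u(\underline{y}) = G([\underline{x}_0],\underline{y}) = H_\varphi(\underline{x}_0,\underline{y}) + u(\underline{x}_0),
\]
so every calibrated subaction differs from $H_\varphi(\underline{x}_0,\cdot)$ by the constant $u(\underline{x}_0)$. Independence from the choice of representative $\underline{x}\in\Omega_\varphi$ follows from Theorem~\ref{thm:relation_H_u}, which precisely says $H_\varphi(\underline{x}_0,\underline{y})+u(\underline{x}_0)=H_\varphi(\underline{x},\underline{y})+u(\underline{x})$ for any $\underline{x} \sim_\varphi \underline{x}_0$. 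The main potential obstacle is verifying that all orbit points indeed lie in a single $\sim_\varphi$-class, but as outlined this follows cleanly from the fundamental calibration identity of Lemma~\ref{lemm:subaction_static} applied to the Peierl's-barrier subaction, together with the vanishing of the orbital sum guaranteed by Proposition~\ref{bounded} and $\Omega_\varphi = A_\varphi$.
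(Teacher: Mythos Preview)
Your proposal is correct and follows the same route as the paper: reduce to the reformulation \eqref{eqn:reformulation_calibrated_subaction} by showing that $\bar{\Omega}_\varphi$ is a singleton, then invoke Theorem~\ref{property_Peierl} and Theorem~\ref{thm:relation_H_u}. The only difference is that the paper simply asserts ``the quotient Aubry set $\bar{\Omega}_\varphi$ is a singleton'' without justification, whereas you supply the missing argument (via Lemma~\ref{lemm:subaction_static} applied to the Peierl's-barrier subactions and the vanishing of the orbital sum, which indeed follows from Proposition~\ref{bounded} or, equivalently, from the fact that the periodic orbit measure must be optimizing since $\mathscr{M}_\varphi\subset\Omega_\varphi$).
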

\begin{proof}
    Since $\Omega_\varphi$ consists of a single periodic orbit, the quotient Aubry set $\bar{\Omega}_\varphi$ is a singleton and, from Theorem~\ref{Peierl's_barrier_inf} and \eqref{eqn:reformulation_calibrated_subaction},
    we deduce that the calibrated subaction of Lipschitz function $\varphi$ is unique up to adding a constant.
    Combining this fact with Theorem~\ref{property_Peierl},
    we have the second claim.
\end{proof}
We then consider the set of $C^r$-functions ($r\ge 2$) with the twist condition,
\[
\mathscr{H}^r=\{h\in C^r([0,1]^2;\R)\mid D_2 D_1 h<0 \},
\]
equipped with the $C^r$-norm. 
It is shown that for generic potential $h$ in $\mathscr{H}^r$
the Aubry set $\Omega_h$ consists of a single fixed point
(Main Theorem 4 in \cite{KMS25}).
Therefore, taking into account Proposition~\ref{prop:uniqueness_calibrated _subaction}, we have the following generic uniqueness of calibrated subactions.
\begin{theorem}
	Let $r\ge 2$ be an integer. For the full shift with $[0,1]$, we have the following:
    \begin{itemize}
    \item[(i)] There is a $C^r$ open dense subset $\mathscr{O}$ in $\mathscr{H}^r$ such that for each $h\in \mathscr{O}$
    the Mather set and the Aubry set of $h$ consist of a single fixed point $a^\infty$ and
	the map $\underline{y}\in X\mapsto H_h(a^\infty,\underline{y})\in\R$ is the unique calibrated subaction of $h$ up to adding a constant.
    \item[(ii)] For arbitrary $h\in\mathscr{H}^r$ there is a $C^r$ open dense subset $\mathscr{V}_h$ in $C^r([0,1];\R)$ such that for each $V\in \mathscr{V}_h$
	the Mather set and the Aubry set of $h+V$ consist of a single fixed point $a^\infty$ and
	the map $\underline{y}\in X\mapsto H_{h+V}(a^\infty,\underline{y})\in\R$ is the unique calibrated subaction of $h+V$ up to adding a constant.
    \end{itemize}
\end{theorem}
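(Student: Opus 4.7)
The plan is to deduce both statements as immediate corollaries of Proposition~\ref{prop:uniqueness_calibrated _subaction} together with the genericity results in Main Theorem~4 of \cite{KMS25}. The key observation is that a fixed point $a^\infty$ is a periodic orbit with minimal period $1$, so whenever the Aubry set is reduced to a single fixed point, Proposition~\ref{prop:uniqueness_calibrated _subaction} applies verbatim and identifies the unique (mod constants) calibrated subaction with the Peierl's barrier $\underline{y} \mapsto H_{\varphi}(a^\infty,\underline{y})$.

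For (i), I would first invoke Main Theorem~4 of \cite{KMS25} to obtain a $C^r$-open dense subset $\mathscr{O} \subset \mathscr{H}^r$ such that for every $h \in \mathscr{O}$ the Mather set $\mathscr{M}_h$ is a single fixed point $\{a^\infty\}$ with $\{a\} = \mathrm{m}_h$. Combined with Main Theorem~\ref{thm:aubry} of the present paper, this forces $\Omega_h = \mathscr{M}_h = \{a^\infty\}$. Since any single fixed point is a (trivially) periodic orbit, Proposition~\ref{prop:uniqueness_calibrated _subaction} applies and yields the uniqueness and explicit formula of the calibrated subaction.

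For (ii), the only additional step is to check that the perturbation class is consistent with the twist condition. For any $h \in \mathscr{H}^r$ and $V \in C^r([0,1];\R)$, interpreting $V$ as acting on the first coordinate gives $(h+V)(x,y) = h(x,y) + V(x)$, for which $D_2 D_1 (h+V) = D_2 D_1 h < 0$. Hence $h + V \in \mathscr{H}^r$ for \emph{every} $V \in C^r([0,1];\R)$, and the parametric statement in Main Theorem~4 of \cite{KMS25} (a 1-parameter version of the genericity argument applied along the affine slice $\{h + V\}$) produces a $C^r$-open dense subset $\mathscr{V}_h \subset C^r([0,1];\R)$ on which the Aubry set of $h+V$ again consists of a single fixed point. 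Proposition~\ref{prop:uniqueness_calibrated _subaction} then finishes the proof.

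The main obstacle is not in the logical combination, which is essentially mechanical, but in verifying that the ``parametric slice'' version of Main Theorem~4 of \cite{KMS25} needed in (ii) is available as stated there: one needs openness and density of the generic property along the affine family $\{h+V : V \in C^r([0,1];\R)\}$ inside $\mathscr{H}^r$. If the cited genericity is formulated only in $\mathscr{H}^r$, I would reduce to it by noting that the affine inclusion $V \mapsto h+V$ is continuous and has image that intersects every $C^r$-neighborhood of any $h+V_0$ densely, because the genericity criterion in \cite{KMS25} is expressed via a finite-jet transversality condition at the minimizer of the diagonal, which is already determined by the restriction of $h+V$ to the diagonal $\{(x,x)\}$ and is therefore generic in the one-variable perturbation $V$ alone.
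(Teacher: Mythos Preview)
Your proposal is correct and follows essentially the same approach as the paper: the theorem is presented there as an immediate consequence of Proposition~\ref{prop:uniqueness_calibrated _subaction} combined with Main Theorem~4 of \cite{KMS25}, with no further argument given. Your write-up is in fact more detailed than the paper's, which simply states the result after remarking that ``for generic potential $h$ in $\mathscr{H}^r$ the Aubry set $\Omega_h$ consists of a single fixed point'' and then invokes Proposition~\ref{prop:uniqueness_calibrated _subaction}; your additional verification that $h+V\in\mathscr{H}^r$ and your discussion of the parametric version for part~(ii) are reasonable elaborations that the paper leaves implicit.
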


\vspace*{33pt}

\noindent
\textbf{Acknowledgement.}~ 
The first author was partially supported by JSPS KAKENHI Grant Number  23H01081 and 23K19009. 
The second author was partially supported by JSPS KAKENHI Grant Number 25K17288.
The third author was partially supported by JSPS KAKENHI Grant Number 21K13816.

\vspace{11pt }
\noindent
\textbf{Data Availability.}~
Data sharing not applicable to this article as no datasets were generated or analyzed during the current study.

\bibliographystyle{alpha}
\bibliography{EOVP.bib}

\end{document}